\documentclass[11pt]{article}

\usepackage[a4paper,margin=1in]{geometry}
\usepackage{amsmath,amssymb,amsthm,mathtools}
\usepackage{aliascnt}
\usepackage{microtype}
\usepackage{xcolor}
\usepackage[colorlinks=true,linkcolor=blue,citecolor=red,urlcolor=blue]{hyperref}
\usepackage[nameinlink,capitalize,noabbrev]{cleveref}

\hypersetup{
  pdftitle={Propagation of Energy Moments for the Plasma--Charge Model},
  pdfauthor={Zhaopeng Wang}
}

\numberwithin{equation}{section}
\allowdisplaybreaks[2]

\newtheorem{theorem}{Theorem}[section]

\newaliascnt{proposition}{theorem}
\newtheorem{proposition}[proposition]{Proposition}
\aliascntresetthe{proposition}

\newaliascnt{lemma}{theorem}
\newtheorem{lemma}[lemma]{Lemma}
\aliascntresetthe{lemma}

\newaliascnt{corollary}{theorem}
\newtheorem{corollary}[corollary]{Corollary}
\aliascntresetthe{corollary}

\theoremstyle{definition}
\newaliascnt{definition}{theorem}
\newtheorem{definition}[definition]{Definition}
\aliascntresetthe{definition}

\theoremstyle{remark}
\newaliascnt{remark}{theorem}
\newtheorem{remark}[remark]{Remark}
\aliascntresetthe{remark}

\crefname{theorem}{Theorem}{Theorems}
\crefname{proposition}{Proposition}{Propositions}
\crefname{lemma}{Lemma}{Lemmas}
\crefname{corollary}{Corollary}{Corollaries}
\crefname{definition}{Definition}{Definitions}
\crefname{remark}{Remark}{Remarks}

\newcommand{\R}{\mathbb R}
\newcommand{\dd}{\,\mathrm d}
\newcommand{\supp}{\operatorname{supp}}
\newcommand{\esssup}{\operatorname*{ess\,sup}}
\newcommand{\1}{\mathbf 1}
\newcommand{\ka}{\mathfrak a}
\newcommand{\HH}{\mathcal H}

\title{Propagation of Energy Moments for the Plasma--Charge Model}
\author{
Zhaopeng Wang\\[0.4em]
\small School of Mathematics and Statistics\\
\small Beijing Institute of Technology\\
\small Beijing 100081, China\\[0.3em]
\small\texttt{zpwang@bit.edu.cn}
}
\date{}

\begin{document}
\maketitle

\begin{abstract}
This paper proves propagation of energy moments for the three-dimensional plasma-charge model, namely, the repulsive Vlasov–Poisson system with a moving point charge. For every $K>2$, an initially finite $K$th energy moment
remains finite on every bounded time interval, without any higher-order
moment assumption, smallness condition on the plasma mass, or initial separation from the point charge. The constructed global weak solution conserves total energy
for almost every time. The proof relies on a critical space-time density estimate and a direct control of the electric field along the moving charge.
\end{abstract}

\medskip
\noindent\textbf{Keywords.}
Vlasov--Poisson system; point charge; propagation of moments; energy conservation.

\smallskip
\noindent\textbf{2020 Mathematics Subject Classification.}
35Q83, 35A01, 35B45, 82C40.

\section{Introduction}

\subsection{The model and previous results}

The plasma--charge model describes a collisionless plasma interacting with a distinguished charged particle. The diffuse plasma evolves under the combined action of its self-consistent electric field and the Coulomb field generated by the moving charge, while the charge is accelerated by the field created by the plasma. The resulting coupling introduces a singular force whose center follows an unknown trajectory, making the analysis substantially more delicate than for the classical Vlasov--Poisson system.

In this paper, we consider the three-dimensional repulsive plasma--charge model on $(t,x,v)\in[0,\infty)\times\R^3\times\R^3$.
After normalizing the charge strengths and the mass of the distinguished
particle, the diffuse plasma satisfies
\begin{equation}\label{eq:VP-moving}
 \partial_t f+v\cdot\nabla_x f+(E+F)\cdot\nabla_v f=0,
\end{equation}
where
\begin{equation}\label{eq:fields}
 \rho(t,x)=\int_{\R^3}f(t,x,v)\,\dd v,
 \quad
 E(t,x)=\int_{\R^3}\frac{x-y}{|x-y|^3}\rho(t,y)\,\dd y,
 \quad
 F(t,x)=\frac{x-\xi(t)}{|x-\xi(t)|^3}.
\end{equation}
The position and velocity of the point charge solve
\begin{equation}\label{eq:charge-ODE}
 \dot\xi(t)=\eta(t),
 \qquad
 \dot\eta(t)=E(t,\xi(t)),
\end{equation}
with initial data
\begin{equation}\label{eq:initial-data}
 f(0,x,v)=f_0(x,v),
 \qquad
 \xi(0)=\xi_0,
 \qquad
 \eta(0)=\eta_0.
\end{equation}
Here $f=f(t,x,v)\geq0$ is the phase-space density of the diffuse plasma,
$\rho$ is its spatial density, and $E$ is the self-consistent electric field
created by the plasma.  The singular field $F$ is generated by the point
charge, whose position and velocity are denoted by $\xi$ and $\eta$.  The
charge does not act on itself, which is why only $E(t,\xi(t))$ appears in
\eqref{eq:charge-ODE}.

For a classical solution, the total energy is
\begin{align}\label{eq:total-energy}
 \mathcal E(t):={}&
 \frac12\iint_{\R^6}|v|^2 f(t,x,v)\,\dd x\dd v
 +\frac12|\eta(t)|^2
 \nonumber\\
 &+\frac12\iint_{\R^3\times\R^3}
       \frac{\rho(t,x)\rho(t,y)}{|x-y|}\,\dd x\dd y
 +\int_{\R^3}\frac{\rho(t,x)}{|x-\xi(t)|}\,\dd x.
\end{align}

All interactions in \eqref{eq:VP-moving}--\eqref{eq:charge-ODE} are
repulsive, and the four terms in the total energy are therefore nonnegative.
The main difficulty is that the Coulomb singularity moves with the unknown
trajectory.  The plasma is accelerated by the inverse-square field $F$, which
blows up at $x=\xi(t)$, while the charge trajectory is itself determined by
the plasma field evaluated at the same moving point.  Estimates for the
plasma and for the charge must consequently be closed together.

The microscopic energy adapted to the moving Coulomb singularity is
\begin{equation}\label{eq:moving-weight}
 \ka(t,x,v):=
 \left(1+|v-\eta(t)|^2+\frac{2}{|x-\xi(t)|}\right)^{1/2}.
\end{equation}
For $m\geq0$, define the $m$th energy moment by
\begin{equation}\label{eq:moving-moment-def}
 H_m(t):=\iint_{\R^6}\ka(t,x,v)^m f(t,x,v)\,\dd x\dd v,
 \qquad
 \HH_m(T):=\esssup_{0\leq t\leq T}H_m(t).
\end{equation}
To see why this weight is natural, let $(X,V)$ be a plasma characteristic and
put $Y=X-\xi$ and $U=V-\eta$.  A direct calculation gives
\begin{equation}\label{eq:intro-energy-cancellation}
 \frac{\dd}{\dd t}\left(\frac12|U|^2+\frac1{|Y|}\right)
 =U\cdot\bigl(E(t,X)-E(t,\xi)\bigr).
\end{equation}
The derivative of $1/|Y|$ cancels exactly the work of the singular field $F$
in the relative kinetic energy.  Thus the inverse-square acceleration does
not appear on the right-hand side of the moment equation.  The remaining
terms involve the plasma field along a plasma trajectory and the field along
the charge.  In particular, the quantity
\begin{equation}\label{eq:intro-QI}
 Q_I:=\int_I|E(t,\xi(t))|\,\dd t
\end{equation}
is the main obstruction to propagating the energy moments.

If the point charge is absent, equivalently $F=0$, then
\eqref{eq:VP-moving}--\eqref{eq:fields} reduce to the classical
three-dimensional Vlasov--Poisson system.  Early global results were obtained
by Batt \cite{Batt1977} for spherically symmetric data and by Bardos and
Degond \cite{BardosDegond1985} for small data. For general data, global classical solvability was established independently by Lions and Perthame \cite{LionsPerthame1991}, through an Eulerian approach based on moment propagation, and by Pfaffelmoser \cite{Pfaffelmoser1992}, through a Lagrangian analysis of the characteristics (see also \cite{Schaeffer1991}). Horst and Hunze \cite{HorstHunze1984} constructed global
weak solutions under general assumptions.  Loeper \cite{Loeper2006} later
proved uniqueness when the spatial density remains bounded, and Ambrosio,
Colombo and Figalli \cite{ACF2017} established the Lagrangian structure of
weak and renormalized solutions.  Gasser, Jabin and Perthame
\cite{GasserJabinPerthame2000} further developed the moment theory, while
Pallard \cite{Pallard2012} propagated every velocity moment of order strictly
larger than two in the whole space.  Recently, the author \cite{WangCritical2026}
established a critical space--time density estimate for bounded finite-energy
weak solutions and proved energy conservation in that class.  We refer to \cite{Glassey1996,Rein2007} for broader accounts of the
classical theory.

The point-charge problem introduces a moving Coulomb singularity into both
the kinetic equation and the particle dynamics.  Caprino and Marchioro
\cite{CaprinoMarchioro2010} initiated the mathematical study of the repulsive
plasma--charge model.  Marchioro, Miot and Pulvirenti \cite{MMP2011} proved
global classical solvability for the three-dimensional system when the
initial plasma is separated from the charge.  Desvillettes, Miot and Saffirio
\cite{DMS2015} removed this separation in a weak-solution framework and
obtained polynomial moment bounds under a smallness condition on the plasma
mass and an auxiliary moment threshold $m_0>6$. Li and Zhang \cite{LiZhang2017} treated unbounded supports and velocity--spatial moments, while Crippa, Ligabue and Saffirio \cite{CLS2018} developed a Lagrangian formulation for the plasma-charge model.
Perturbative stability and modified scattering were proved in
\cite{PausaderWidmayer2021,PWY2026}.   

This paper reaches the natural above-energy range without imposing an
auxiliary moment.  For any prescribed $K>2$, the assumption
$H_K(0)<\infty$ yields propagation of the same order $H_K(t)$.  The plasma
mass is arbitrary, the initial density may reach the charge, and the resulting
global weak solution conserves the total energy for almost every time. It is worth noting that, although \cite{WuZhang2021} also allows arbitrary plasma mass, its argument relies on a high-moment iteration. The present proof instead closes the field along the charge directly at the finite-energy level. After a low–high decomposition in relative velocity, the coefficient multiplying \(Q_I\) is the small mass of an exceptional set of characteristics rather than the total plasma mass. This nonperturbative closure is combined with the critical density estimate and a relative radial identity, and its constants are independent of the initial separation from the charge.

\subsection{Main result}

We first specify the weak formulation used in the existence theorem.

\begin{definition}[Weak plasma--charge solution]\label{def:weak-solution}
For prescribed initial data $(f_0,\xi_0,\eta_0)$, a weak plasma--charge
solution on $[0,T]$ consists of
\[
 f\in L^\infty(0,T;L^1(\R^6)\cap L^\infty(\R^6)),
 \qquad
 \xi\in C^1([0,T];\R^3),
 \qquad
 \eta\in C([0,T];\R^3)\cap W^{1,1}(0,T;\R^3),
\]
with $f\geq0$, $\xi(0)=\xi_0$, $\eta(0)=\eta_0$, and
$\dot\xi=\eta$.  Set
\[
 \rho(t,x)=\int_{\R^3}f(t,x,v)\,\dd v,
 \qquad
 E(t,x)=\int_{\R^3}\frac{x-y}{|x-y|^3}\rho(t,y)\,\dd y.
\]
We require
\begin{equation}\label{eq:weak-near-charge-integrability}
 \int_0^T\int_{\R^3}
 \frac{\rho(t,x)}{|x-\xi(t)|^2}\,\dd x\dd t<\infty,
\end{equation}
and, for every $\varphi\in C_c^\infty([0,T)\times\R^6)$,
\begin{align*}
 &\int_0^T\iint_{\R^6} f
 \left(\partial_t\varphi+v\cdot\nabla_x\varphi+
 \left(E+\frac{x-\xi(t)}{|x-\xi(t)|^3}\right)
 \cdot\nabla_v\varphi\right)\,\dd x\dd v\dd t\\
 &\qquad+\iint_{\R^6}f_0(x,v)\varphi(0,x,v)\,\dd x\dd v=0.
\end{align*}
The Coulomb integral $E(t,\xi(t))$ is absolutely convergent for almost every
$t$, belongs to $L^1(0,T)$, and
\begin{equation}\label{eq:weak-newton}
 \eta(t)=\eta_0+\int_0^tE(s,\xi(s))\,\dd s,
 \qquad 0\leq t\leq T.
\end{equation}
\end{definition}

We can now state the main result.

\begin{theorem}
\label{thm:main}
Let $K>2$, $(\xi_0,\eta_0)\in\R^3\times\R^3$, and
$0\leq f_0\in L^1(\R^6)\cap L^\infty(\R^6)$.  Assume
\begin{equation}\label{eq:initial-moving-moment}
 H_K(0)=\iint_{\R^6}
 \left(1+|v-\eta_0|^2+\frac{2}{|x-\xi_0|}\right)^{K/2}
 f_0(x,v)\,\dd x\dd v<\infty.
\end{equation}
Then the initial total energy is finite, and
\eqref{eq:VP-moving}--\eqref{eq:initial-data} admit a global weak
plasma--charge solution in the sense of \cref{def:weak-solution}.  For every
$T>0$, the prescribed energy moment is propagated:
\begin{equation}\label{eq:target-moment}
 \HH_K(T)\leq C_{T,K}.
\end{equation}
The total energy is conserved in the sense that
\begin{equation}\label{eq:main-energy-conservation}
 \mathcal E(t)=\mathcal E(0)
 \qquad\text{for almost every }t\geq0.
\end{equation}
Moreover,
\begin{align}\label{eq:main-critical-estimates}
 &\int_0^T|E(t,\xi(t))|\,\dd t
 +\int_0^T\int_{\R^3}
 \frac{\rho(t,x)}{|x-\xi(t)|^2}\,\dd x\dd t
 +\int_0^T\int_{\R^3}\rho(t,x)^2\,\dd x\dd t
 \leq C(1+T).
\end{align}
The charge velocity satisfies
\begin{equation}\label{eq:main-holder}
 |\eta(t)-\eta(s)|\leq C_{T,K}|t-s|^{\alpha_K},
 \qquad 0\leq s\leq t\leq T,
\end{equation}
where
\begin{equation}\label{eq:holder-exponent}
 \alpha_K:=\frac{3(K-1)}{3K+1}.
\end{equation}
Consequently,
$\eta\in C^{0,\alpha_K}_{\mathrm{loc}}([0,\infty))$ and
$\xi\in C^{1,\alpha_K}_{\mathrm{loc}}([0,\infty))$.
\end{theorem}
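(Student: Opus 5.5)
We will prove \cref{thm:main} by the standard route of a priori estimates for a regularized problem, followed by compactness. First we regularize: the Coulomb kernel is replaced by a mollified kernel of width $\varepsilon$, both in $F$ and in the plasma self-interaction, and $f_0$ is replaced by smooth compactly supported data $f_0^\varepsilon$ with $H_K^\varepsilon(0)\to H_K(0)$. We then show that the regularized system has a global smooth solution $(f^\varepsilon,\xi^\varepsilon,\eta^\varepsilon)$, and that this solution conserves a regularized total energy exactly. Finiteness of $\mathcal E(0)$ comes from $H_K(0)<\infty$: the interaction term with the charge is bounded by $\iint |x-\xi_0|^{-1}f_0\le H_2(0)\le H_K(0)$, and the kinetic term follows from $|v|^2\le 2|v-\eta_0|^2+2|\eta_0|^2$. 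The self-interaction is bounded by Hardy--Littlewood--Sobolev, using the interpolation $\|\rho\|_{5/3}\lesssim\|f\|_\infty^{2/5}(\iint|v|^2f)^{3/5}$. All bounds below must be uniform in $\varepsilon$.

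The estimates come in three groups, obtained in this order.
\begin{enumerate}
\item \emph{Energy-level bounds.} Conservation of energy and of $\|f\|_\infty$ give, uniformly in $\varepsilon$, bounds on the kinetic energy, on $|\eta|$, and on $\|\rho\|_{L^{5/3}}$.
\item \emph{Critical space--time bounds.} We invoke the critical space--time density estimate of \cite{WangCritical2026}, adapted to the moving charge, to get $\int_0^T\!\int\rho^2\le C(1+T)$. For the term $\int\!\int\rho/|x-\xi|^2$ we use the relative radial identity: differentiate $\iint \frac{Y}{|Y|}\cdot U\,f$ with $Y=x-\xi$, $U=v-\eta$. The time derivative contains $\int\rho/|Y|^2$ with a good sign, together with angular kinetic terms $|U|^2-(U\cdot Y/|Y|)^2\ge0$ divided by $|Y|$, and field terms $(E(x)-E(\xi))\cdot Y/|Y|$. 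The boundary terms are bounded by $H_1\lesssim$ energy. The contribution $E(t,x)\cdot Y/|Y|$ integrated against $f$ is the plasma--plasma part and is handled by the density estimate. The remaining contribution is bounded by $M|E(t,\xi)|$, where $M$ is the plasma mass.
\item \emph{The field along the charge.} This is the main obstacle, namely closing the bound on $Q_{[0,T]}$. We write $|E(t,\xi)|\le\int\rho/|x-\xi|^2$, so that schematically $Q\le C(1+T)+MQ$, which closes only for small mass. To avoid this, we apply the radial identity with the weight $\chi(|U|\le R)$, a low--high decomposition in relative velocity. Characteristics with $|U|>R$ give a $|U|^{-1}$ gain via the singular integral $\int\!\int f/|Y|^2$ over high relative velocity, estimated by $R^{-1}H_2$-type terms. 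In the low part, the coefficient of $Q$ is the mass of an exceptional set of characteristics that are slow and close to the charge, which is $\le \int_{|U|\le R,\,|Y|\le r}f$. We would make this small, by a short time splitting $I$ together with the $L^\infty$ bound on $f$, to get a coefficient $\le\frac12$ on intervals of length $\tau$ independent of $\varepsilon$. This gives $Q_I\le C$ on each interval, and summing over the intervals gives \eqref{eq:main-critical-estimates}. We expect this step to be the hardest, since the smallness has to be uniform in the initial separation.
\end{enumerate}

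With $Q$ controlled, we propagate moments using \eqref{eq:intro-energy-cancellation}. Along characteristics, $\frac{d}{dt}\ka^K\lesssim \ka^{K-1}\bigl(|E(t,X)|+|E(t,\xi)|\bigr)$. The $|E(t,\xi)|$ part gives $\frac{d}{dt}H_K\lesssim |E(t,\xi)|H_{K-1}+\int|E|\ka^{K-1}f$. For the first part, $Q\in L^1$ together with Gronwall suffices. For the second, we use H\"older's inequality in the form $\int|E|\ka^{K-1}f\le\|E\|_{L^p}\|\int\ka^{K-1}f\,dv\|_{L^{p'}}$, interpolate the velocity integral by the $L^\infty$ bound on $f$ against $H_K$ (as in \cite{Pallard2012}), and use the space--time $L^2$ bound on $\rho$. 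This yields $E\in L^2_tL^6_x$ and a Gronwall inequality that is at most linear in $H_K$. The constraint $K>2$ enters here, so that the exponents fit below the critical threshold.

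Finally we pass to the limit. Weak-$*$ compactness of $f^\varepsilon$, together with Arzel\`a--Ascoli applied to $\xi^\varepsilon$ and to $\eta^\varepsilon$, gives the limit solution. For $\eta^\varepsilon$ we need equicontinuity, which comes from the H\"older bound. To obtain it, split $E(t,\xi)$ into $|x-\xi|<r$ and $|x-\xi|\ge r$: the near part is bounded using $\rho\le C\ka$-moment interpolation, $\|\rho\|_{L^{(K+3)/3}}\lesssim H_K^{3/(K+3)}$, and the far part by $r^{-2}$. Optimizing over $r$ and integrating over $[s,t]$ gives exponent $\alpha_K$. Averaging lemmas, or the $L^2_{t,x}$ bound on $\rho$ combined with strong compactness of $\rho^\varepsilon$, let us pass to the limit in $E f$ and in $E(t,\xi(t))$; the bound \eqref{eq:weak-near-charge-integrability} controls the singular term. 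Energy conservation almost everywhere follows because $H_K$ with $K>2$ gives uniform integrability of $|v|^2f$ and of $f/|x-\xi|$. This turns lower semicontinuity into equality, and the potential terms converge strongly by the $L^2_{t,x}$ compactness of $\rho$.
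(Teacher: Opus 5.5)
The closure of $Q$ in your Step~3 does not work as described, and Step~2 is circular in the order you give. The compensated-integrability bound needs $\|\operatorname{Div}\mathbb A\|_{\mathcal M}$, and this contains $\int_I\!\int\rho|x-\xi|^{-2}$. Your own radial identity bounds that quantity only by $C(1+\delta)+C_MQ_I$. So the density estimate is merely conditional, $\int_I\!\int\rho^2\le C(1+Q_I)^{4/3}$, and cannot come before $Q$ is controlled.

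In Step~3 itself there are two problems. First, the radial identity cannot be localized with an Eulerian cutoff $\chi(|U|\le R)$. Differentiating $\chi(|U|)p$ along characteristics produces $\chi'(|U|)p^2|U|^{-1}|Y|^{-2}$, which has the same singularity as the good term and the wrong sign. Second, the fixed-time mass $\int_{|U|\le R,|Y|\le r}f$ says nothing about which characteristics approach the charge during $I$. Along a trajectory, $U$ is driven by the singular repulsion and by $E(t,\xi)$, i.e.\ by the unknown $Q$. Moreover, slow particles near the charge have $\ka^2\ge 2/r$, so they are high-energy, not low.

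The missing mechanism is Lagrangian, and the exceptional set sits on the \emph{high} side. The argument runs as follows.
\begin{enumerate}
\item Label characteristics at $t_0$ and call $z$ high if $\sup_I|U_z|>\Lambda(1+Q_I)$.
\item By \eqref{eq:moving-weight-derivative}, the variation of $\ka$ along $z$ is at most $G_z+Q_I$, where $G_z=\int_I|E(t,X_z)|$; the Coulomb force cancels.
\item Hence a high label has $\ka(t_0)$ or $G_z$ of size $\Lambda(1+Q_I)$. Chebyshev gives the high set mass of order $(1+Q_I)^{-2}+\delta(1+Q_I)^{-2/3}$.
\item Apply the radial identity only to these labels, so that $Q_I$ is multiplied by this small mass.
\item Do not use the radial identity on the low labels. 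Their transported subdensity is bounded by $C_M\Lambda^3(1+Q_I)^3$ and by $\rho$, so \cref{lem:positive-coulomb-interpolation} bounds their field at $\xi$ by $C(1+Q_I)^{4/3}$.
\end{enumerate}
The resulting inequality contains the superlinear term $\delta(1+Q_I)^{4/3}$. It is closed by a first-hitting argument on short intervals, not by absorption with a coefficient $\tfrac12$.

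Your H\"older argument also fails. Controlling the near part $\int_{|x-\xi|<r}\rho|x-\xi|^{-2}\dd x$ by $\|\rho\|_{(K+3)/3}$ requires $|x|^{-2}\1_{\{|x|<r\}}\in L^{(K+3)/K}$, i.e.\ $K>6$. For $2<K\le6$ it gives nothing. For $K>6$ it gives a pointwise bound on $E(t,\xi(t))$, so no optimization in $r$ produces $\alpha_K$.

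The exponent comes from the same label decomposition on $[s,s+\delta]$, with $Q_*\le1$ enforced by a first-hitting time and threshold $P=\delta^{-3/(3K+1)}$. The low part costs $\delta P^{4/3}$. The high part costs $\mu_H^{1-1/K}$ with $\mu_H\lesssim P^{-K}$ from the propagated $K$-moment. Both equal $\delta^{\alpha_K}$.

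Finally, in the limit passage the uniform $L^1$ bound \eqref{eq:weak-near-charge-integrability} does not prevent the inverse-square force from concentrating on the charge trajectory. You need a uniform shrinking-tube estimate such as $\int_0^T\!\int_{|x-\xi_n|<\varepsilon}\rho_n|x-\xi_n|^{-2}\le C\varepsilon^{1/2}$. The moment $H_K$ alone yields such smallness only for $K>4$; in general it is proved by another exceptional-set argument on the labels entering the tube. Without it you identify neither $E(t,\xi(t))$ in Newton's law nor the singular force in the Vlasov equation.
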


\begin{remark}\label{rem:DMS-comparison}
\cref{thm:main} assumes only that the initial \(K\)th energy moment is finite and proves propagation of this same moment; no higher-order moment is required.  The plasma mass need not be small, and no initial vacuum neighborhood around the point charge is imposed.  
\end{remark}
\begin{remark}If the constructed solution admitted a representative in
$C([0,T];L^1(\mathbb R^6))$, then the propagated $K$th moment and a standard
truncation argument would imply continuity of all four energy components.
Consequently, the energy identity would hold for every $t\geq0$. Establishing
such strong time continuity would require a renormalized or Lagrangian theory
adapted to the moving Coulomb singularity; the standard DiPerna--Lions
framework \cite{DiPernaLions1989} does not apply directly. We do not pursue this issue here.
\end{remark}
\begin{remark}
For \(K\geq4\), the propagated energy moment controls the inverse-square interaction at the charge and hence gives \(E(\cdot,\xi(\cdot))\in L^\infty_{\mathrm{loc}}\). Therefore, \(\eta\) is locally Lipschitz and \(\xi\in C_{\mathrm{loc}}^{1,1}\), improving \eqref{eq:main-holder} in this range.
\end{remark}
\begin{remark}\label{rem:energy-Kgt2}
The assumption $K>2$ enters the energy argument through the decay
$R^{2-K}$ in the velocity-tail estimate used below.  At the endpoint $K=2$, this
argument gives no uniform integrability of the plasma kinetic energy, and a
kinetic-energy defect may remain in the weak limit.  In
\cite{WangCritical2026}, energy conservation at the endpoint is instead
proved directly for the limiting weak solution.  Here the higher moment
allows the energy identities of the smooth approximations to pass to the
limit term by term.
\end{remark}

\subsection{Proof strategy}

We first work with smooth compactly supported data whose spatial support has
positive distance from the charge.  All estimates are derived independently
of that distance, so the separation introduced in the approximation can be
removed at the end.  The quantity that couples the estimates for the plasma
and the charge is $Q_I$ from \eqref{eq:intro-QI}.

Let
\[
 \mathbb A=
 \int_{\R^3}\binom1v\otimes\binom1v f\,\dd v
 =\begin{pmatrix}\rho&j^{\mathsf T}\\ j&\Pi\end{pmatrix}
\]
be the local mass--momentum tensor.  For the classical Vlasov--Poisson
system, its divergence contains only the source $\rho E$, which is controlled
at the finite-energy level \cite{WangCritical2026}.  In the present problem,
\begin{equation}\label{eq:intro-tensor-divergence}
 \operatorname{Div}_{t,x}\mathbb A
 =\binom0{\rho E+\rho(x-\xi)/|x-\xi|^3}.
\end{equation}
Serre's compensated-integrability theorem \cite{Serre2019}, together with a lower bound for
the determinant of $\mathbb A$, gives
\begin{equation}\label{eq:intro-source-to-density}
 \int_I\int_{\R^3}\rho^2\,\dd x\dd t
 \leq C\left(1+
 \int_I\int_{\R^3}\frac{\rho(t,x)}{|x-\xi(t)|^2}\,\dd x\dd t
 \right)^{4/3}.
\end{equation}
The new source is therefore the only term that prevents the critical density
estimate from closing directly at the energy level.

To control this source, we use the radial momentum in the moving frame.  Its
derivative contains the positive term $|X-\xi|^{-2}$, and integration over
characteristics yields
\begin{equation}\label{eq:intro-source}
 \int_I\int_{\R^3}\frac{\rho(t,x)}{|x-\xi(t)|^2}\,\dd x\dd t
 \leq C(1+Q_I).
\end{equation}
It remains to estimate $Q_I$.  The characteristics are labelled at the left
endpoint of $I$ and divided according to their relative velocity.  The low
part has bounded spatial density and is estimated by a positive Coulomb
interpolation.  For the high part, the variation of the microscopic-energy weight shows
that only a set of small transported mass can contribute.  Applying the
radial identity to this exceptional set gives an inequality in which the
coefficient of $Q_I$ is its small mass.  A first-hitting argument then closes
$Q_I$ on a time interval whose length depends only on the finite-energy data.
Iteration gives
\begin{equation}\label{eq:intro-global-core}
 \int_0^T|E(t,\xi(t))|\,\dd t
 +\int_0^T\int_{\R^3}\frac{\rho(t,x)}{|x-\xi(t)|^2}\,\dd x\dd t
 +\int_0^T\int_{\R^3}\rho(t,x)^2\,\dd x\dd t
 \leq C(1+T).
\end{equation}
This is the principal a priori estimate used in the rest of the paper.

The cancellation \eqref{eq:intro-energy-cancellation} reduces the derivative
of $H_K$ to terms involving $E(t,X(t))$ and $E(t,\xi(t))$.  Weighted kinetic
interpolation controls the first term, while \eqref{eq:intro-global-core}
provides the required time integrability.  The resulting differential
inequality closes at the same order $K$, without invoking any larger moment.

For the weak-solution construction, the initial density is truncated at large
$|x|+|v|$, removed in a small neighborhood of $\xi_0$, and mollified on a
smaller scale.  The resulting smooth separated data generate global
classical solutions of the original singular system.  The uniform estimates
above give compactness of the charge trajectories.  Velocity averaging and
phase-space tightness yield strong convergence of the macroscopic densities
and electric fields, while a shrinking-tube estimate excludes concentration
of the inverse-square force around the moving charge.  These facts identify
both the force $E_n(t,\xi_n(t))$ in Newton's equation and the singular field in
the Vlasov equation.

Finally, the potential-energy terms are strongly compact.  The propagated
moment with $K>2$ also gives
\[
 \sup_n\sup_{0\leq t\leq T}
 \iint_{|v|>R}|v|^2f_n(t,x,v)\,\dd x\dd v
 \leq C_{T,K}R^{2-K},
\]
so the plasma kinetic energies are uniformly integrable in velocity.  The
energy identities of the classical approximations therefore pass to the
weak limit without a kinetic-energy defect.

\Cref{sec:reduction} contains the preliminary estimates.
\Cref{sec:critical-charge} proves the critical density estimate and the
control of the field along the charge.  \Cref{sec:all-moments} propagates the
prescribed energy moment, and \cref{sec:weak-construction} constructs
the global weak solution and passes the moment and energy identities to the
limit.

Throughout the paper, $K>2$ is fixed and
\[
 C_M:=1+\|f_0\|_{L^1(\R^6)}+\|f_0\|_{L^\infty(\R^6)}.
\]
The constant $C_{T,K}$ may depend on $T$, $K$, $C_M$, the initial energy, and
$H_K(0)$.  A constant denoted by $C$ depends only on the finite-energy data
unless an additional dependence is explicitly indicated, and may change from
line to line. 

\section{Preliminaries}
\label{sec:reduction}

We begin with several estimates that will be used repeatedly in the sequel. Throughout \cref{sec:reduction,sec:critical-charge,sec:all-moments}, we
consider a classical solution with smooth compactly supported initial data
whose spatial support is separated from the charge.  The estimates are
uniform with respect to this separation distance.

Let $(X,V)$ be a plasma characteristic.  In the moving frame, put
\[
 Y(t)=X(t)-\xi(t),
 \qquad
 U(t)=V(t)-\eta(t).
\]
Then
\begin{equation}\label{eq:relative-characteristics}
 \dot Y=U,
 \qquad
 \dot U=E(t,X)-E(t,\xi)+\frac{Y}{|Y|^3}.
\end{equation}

\begin{lemma} 
\label{lem:moving-cancellation}
Along every classical plasma characteristic,
\begin{equation}\label{eq:moving-cancellation}
 \frac{\dd}{\dd t}
 \left(|U|^2+\frac{2}{|Y|}\right)
 =2U\cdot\bigl(E(t,X)-E(t,\xi)\bigr).
\end{equation}
Consequently, for the microscopic-energy weight $\ka$ defined in
\eqref{eq:moving-weight},
\begin{equation}\label{eq:moving-weight-derivative}
 \left|\frac{\dd}{\dd t}\ka(t,X(t),V(t))\right|
 \le |E(t,X(t))|+|E(t,\xi(t))|.
\end{equation}
\end{lemma}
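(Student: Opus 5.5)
The argument is a direct computation along the relative characteristics \eqref{eq:relative-characteristics}, followed by an elementary inequality for the weight. For a classical solution with data separated from the charge, the characteristic satisfies $Y(t)\neq0$ on the time interval considered. Hence $|Y|^{-1}$ is $C^1$ along it, and we may differentiate.

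\textbf{Step 1: the identity \eqref{eq:moving-cancellation}.} Using $\dot Y=U$ we get
\[
\frac{\dd}{\dd t}\frac{2}{|Y|}=-\frac{2\,Y\cdot \dot Y}{|Y|^3}=-\frac{2\,Y\cdot U}{|Y|^3}.
\]
Using the $\dot U$ equation we get
\[
\frac{\dd}{\dd t}|U|^2=2U\cdot\dot U=2U\cdot\bigl(E(t,X)-E(t,\xi)\bigr)+\frac{2\,U\cdot Y}{|Y|^3}.
\]
The singular terms cancel exactly, which yields \eqref{eq:moving-cancellation}.

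\textbf{Step 2: the bound \eqref{eq:moving-weight-derivative}.} Write $\ka^2=1+|U|^2+2/|Y|$ along the characteristic. Step 1 gives
\[
2\ka\,\dot\ka=\frac{\dd}{\dd t}\ka^2=2U\cdot\bigl(E(t,X)-E(t,\xi)\bigr).
\]
Since $\ka\ge1>0$, we can divide by $2\ka$:
\[
|\dot\ka|\le\frac{|U|}{\ka}\,|E(t,X)-E(t,\xi)|.
\]
Because $\ka\ge|U|$, the prefactor is at most one. The triangle inequality then gives $|\dot\ka|\le|E(t,X(t))|+|E(t,\xi(t))|$.

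\textbf{Regularity.} The only point to check is that everything is differentiable. I would argue as follows. For smooth compactly supported data, $\rho$ is bounded and compactly supported, so $E$ is continuous (indeed $C^1$). Hence $E(t,\xi(t))$ is well defined and continuous. The separation of the plasma from the charge persists along classical characteristics on the existence interval of the classical solution, so $Y\neq0$ there. I expect no genuine obstacle here; the lemma is essentially bookkeeping that records the cancellation of the inverse-square force.
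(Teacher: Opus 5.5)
Your proof is correct and follows the paper's argument essentially line by line. Both compute $\frac{\dd}{\dd t}(2/|Y|)$ and $\frac{\dd}{\dd t}|U|^2$, cancel the $\pm 2U\cdot Y/|Y|^3$ terms, then derive $\dot\ka=\ka^{-1}U\cdot\bigl(E(t,X)-E(t,\xi)\bigr)$ and conclude with $|U|\le\ka$; your remark that $Y\neq0$ and $E$ is continuous for the separated classical solutions just makes explicit a regularity point the paper leaves implicit.
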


\begin{proof}
Since $\dot Y=U$,
\[
 \frac{\dd}{\dd t}\frac{2}{|Y|}
 =-2\frac{Y\cdot U}{|Y|^3}.
\]
On the other hand, \eqref{eq:relative-characteristics} gives
\[
 \frac{\dd}{\dd t}|U|^2
 =2U\cdot\bigl(E(t,X)-E(t,\xi)\bigr)
 +2\frac{U\cdot Y}{|Y|^3}.
\]
The two singular terms cancel.  Since
\[
 \ka(t,X,V)^2=1+|U|^2+\frac{2}{|Y|},
\]
\eqref{eq:moving-cancellation} also gives
\[
 \frac{\dd}{\dd t}\ka(t,X,V)
 =\frac{U}{\ka(t,X,V)}
  \cdot\bigl(E(t,X)-E(t,\xi)\bigr).
\]
The bound $|U|/\ka(t,X,V)\leq1$ yields
\eqref{eq:moving-weight-derivative}.  Thus the singular Coulomb acceleration
disappears from the derivative of the microscopic-energy weight, leaving only the two
self-consistent field values.
\end{proof}

The characteristic flow preserves Lebesgue measure, while $f$ is constant
along characteristics.  Hence the mass and the phase-space $L^\infty$ norm
are conserved and bounded by $C_M$.  The positivity and conservation of the
total energy, together with the elementary kinetic interpolation used below,
give
\begin{equation}\label{eq:finite-energy-layer}
 \iint_{\R^6}|v|^2f\,\dd x\dd v+|\eta|^2
 +\int_{\R^3}\frac{\rho(x)}{|x-\xi|}\,\dd x
 +\|\rho\|_{5/3}+H_2\leq C.
\end{equation}
Indeed, the only term not read directly from the energy or mass is the
relative kinetic part of $H_2$, and
\[
 \iint|v-\eta|^2f\,\dd x\dd v
 \leq2\iint|v|^2f\,\dd x\dd v+2C_M|\eta|^2.
\]

\begin{lemma}\label{lem:regular-source}
At every time,
\begin{equation}\label{eq:rhoE-L1}
 \|\rho E\|_1\leq C.
\end{equation}
Consequently, for every interval $I$ of length $\delta$,
\begin{equation}\label{eq:rhoE-time}
 \int_I\int_{\R^3}\rho(t,x)|E(t,x)|\,\dd x\dd t\leq C\delta.
\end{equation}
\end{lemma}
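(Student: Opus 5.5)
We bound $\|\rho E\|_1$ by Hölder's inequality and the Hardy--Littlewood--Sobolev inequality, using only the finite-energy quantities in \eqref{eq:finite-energy-layer}. By Hölder,
\[
 \|\rho E\|_1\leq \|\rho\|_{p}\,\|E\|_{p'},\qquad \frac1p+\frac1{p'}=1 .
\]
Since $E$ is the convolution of $\rho$ with a kernel homogeneous of degree $-2$, HLS gives $\|E\|_{q}\leq C\|\rho\|_{r}$ whenever $1<r<3$ and $\frac1q=\frac1r-\frac13$. We choose $p=r$ so that only one Lebesgue norm of $\rho$ appears. The condition $\frac1{p'}=\frac1p-\frac13$ together with $\frac1p+\frac1{p'}=1$ gives $\frac2p=\frac43$, that is, $p=\frac32$ and $p'=3$. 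This yields
\[
 \|\rho E\|_1\leq \|\rho\|_{3/2}\,\|E\|_{3}\leq C\|\rho\|_{3/2}^2 .
\]

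Next, interpolate between the conserved mass and the kinetic-interpolation bound. We have $\|\rho\|_1\leq C_M$, and $\|\rho\|_{5/3}\leq C$ by \eqref{eq:finite-energy-layer}. Since $1<\frac32<\frac53$, Hölder interpolation gives $\|\rho\|_{3/2}\leq \|\rho\|_1^{\theta}\|\rho\|_{5/3}^{1-\theta}\leq C$ for the appropriate $\theta\in(0,1)$. Hence \eqref{eq:rhoE-L1} holds with a constant depending only on $C_M$ and the initial energy.

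The time estimate \eqref{eq:rhoE-time} then follows by integrating the pointwise-in-time bound over an interval $I$ of length $\delta$. The constant is uniform in $t$ because energy conservation holds for the classical solutions considered in this section, so \eqref{eq:finite-energy-layer} holds at every time.

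No step here is a genuine obstacle. The only points needing care are the exponent bookkeeping in HLS ($r=3/2$ lies in the admissible range $(1,3)$) and confirming that the bound $\|\rho\|_{5/3}\leq C$ is available uniformly in time. That bound comes from the standard kinetic interpolation $\|\rho\|_{5/3}\leq C\|f\|_\infty^{2/5}\bigl(\iint|v|^2f\bigr)^{3/5}$. It is already recorded in \eqref{eq:finite-energy-layer}, and it uses only the conserved $L^\infty$ norm of $f$ and the energy bound, with no dependence on the separation from the charge.
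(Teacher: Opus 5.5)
Your proof is correct and takes essentially the paper's route: Hölder's inequality, Hardy--Littlewood--Sobolev, interpolation of $\rho$ between the conserved mass and the finite-energy $L^{5/3}$ bound, and then integration in time. The only difference is the exponent pair. You use the self-dual choice $\|\rho E\|_1\leq\|\rho\|_{3/2}\|E\|_3\leq C\|\rho\|_{3/2}^2$, whereas the paper applies HLS directly to $\rho\in L^{5/3}$ to get $E\in L^{15/4}$ and pairs it with $\|\rho\|_{15/11}$; both choices are valid.
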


\begin{proof}
For every $R>0$, split the velocity integral into $|v|\leq R$ and
$|v|>R$.  Since $0\leq f\leq C_M$,
\[
 \rho(x)
 \leq C C_M R^3
 +R^{-2}\int_{\R^3}|v|^2f(x,v)\,\dd v.
\]
Optimizing in $R$ gives the pointwise kinetic interpolation inequality
\[
 \rho(x)^{5/3}
 \leq C C_M^{2/3}\int_{\R^3}|v|^2f(x,v)\,\dd v.
\]
After integration in $x$ and use of \eqref{eq:finite-energy-layer},
\[
 \|\rho\|_{5/3}\leq C,
 \qquad
 \|\rho\|_{15/11}
 \leq\|\rho\|_1^{1/3}\|\rho\|_{5/3}^{2/3}\leq C.
\]
The Hardy--Littlewood--Sobolev inequality yields
\[
 \|E\|_{15/4}\leq C\|\rho\|_{5/3}.
\]
Because $15/11$ and $15/4$ are conjugate, H\"older's inequality gives
\[
 \|\rho E\|_1
 \leq\|\rho\|_{15/11}\|E\|_{15/4}\leq C.
\]
Integrating \eqref{eq:rhoE-L1} over $I$ yields
\eqref{eq:rhoE-time}.  The regular momentum source is therefore controlled
solely by the finite-energy layer, uniformly on every time interval.
\end{proof}

\begin{lemma} 
\label{lem:positive-coulomb-interpolation}
Let $0\leq\sigma\leq\rho$, with
$\rho\in L^{5/3}(\R^3)$ and $\sigma\in L^\infty(\R^3)$.  Then
\begin{equation}\label{eq:positive-coulomb-interpolation}
 \sup_{x\in\R^3}\int_{\R^3}\frac{\sigma(y)}{|x-y|^2}\,\dd y
 \leq C\|\rho\|_{5/3}^{5/9}\|\sigma\|_\infty^{4/9}.
\end{equation}
\end{lemma}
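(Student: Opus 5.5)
We fix $x\in\R^3$ and split the integral at a radius $r>0$ to be chosen. On the ball $|x-y|<r$ we use only the bound $\sigma\leq\|\sigma\|_\infty$:
\[
 \int_{|x-y|<r}\frac{\sigma(y)}{|x-y|^2}\,\dd y
 \leq\|\sigma\|_\infty\int_{|z|<r}\frac{\dd z}{|z|^2}
 =4\pi r\,\|\sigma\|_\infty .
\]
On the complement we use $\sigma\leq\rho$ and H\"older's inequality with exponents $5/3$ and $5/2$:
\[
 \int_{|x-y|\geq r}\frac{\sigma(y)}{|x-y|^2}\,\dd y
 \leq\|\rho\|_{5/3}\Bigl(\int_{|z|\geq r}|z|^{-5}\,\dd z\Bigr)^{2/5}
 = C\,r^{-4/5}\|\rho\|_{5/3},
\]
since $\int_{|z|\geq r}|z|^{-5}\dd z=2\pi r^{-2}$ is finite because $5>3$. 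Both bounds are independent of $x$, and positivity of $\sigma$ is the only structure used.

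Next we optimize in $r$. Balancing $r\|\sigma\|_\infty$ against $r^{-4/5}\|\rho\|_{5/3}$ leads to the choice $r^{9/5}=\|\rho\|_{5/3}/\|\sigma\|_\infty$, that is, $r=(\|\rho\|_{5/3}/\|\sigma\|_\infty)^{5/9}$. With this choice both terms are comparable to
\[
 \|\sigma\|_\infty^{1-5/9}\|\rho\|_{5/3}^{5/9}
 =\|\rho\|_{5/3}^{5/9}\|\sigma\|_\infty^{4/9},
\]
which is \eqref{eq:positive-coulomb-interpolation}. Taking the supremum over $x$ completes the argument.

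The degenerate cases need a separate check. If $\|\sigma\|_\infty=0$, then $\sigma=0$ a.e. and there is nothing to prove. If $\|\rho\|_{5/3}=0$, then $\sigma\leq\rho$ forces $\sigma=0$ a.e. as well. There is no real obstacle here; the only point requiring care is the exponent bookkeeping in the optimization, which confirms that the exponents $5/9$ and $4/9$ sum to one, as dimensional scaling requires.
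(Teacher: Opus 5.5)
Your proposal is correct and follows essentially the same route as the paper: split at a radius, bound the near part by $4\pi r\|\sigma\|_\infty$, bound the far part by H\"older with $\|\rho\|_{5/3}$ and $\bigl(\int_{|z|\geq r}|z|^{-5}\,\dd z\bigr)^{2/5}=Cr^{-4/5}$, then balance the two terms. Your explicit optimizing radius and your handling of the degenerate cases match what the paper does.
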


\begin{proof}
Fix $x\in\R^3$ and $R>0$.  On the ball $|x-y|\leq R$,
\[
 \int_{|x-y|\leq R}\frac{\sigma(y)}{|x-y|^2}\,\dd y
 \leq\|\sigma\|_\infty
      \int_{|h|\leq R}\frac{\dd h}{|h|^2}
 =4\pi\|\sigma\|_\infty R.
\]
On the complement, use $\sigma\leq\rho$ and H\"older's inequality:
\[
 \int_{|x-y|>R}\frac{\sigma(y)}{|x-y|^2}\,\dd y
 \leq\|\rho\|_{5/3}
 \left(\int_{|h|>R}|h|^{-5}\,\dd h\right)^{2/5}
 \leq C\|\rho\|_{5/3}R^{-4/5}.
\]
At the optimizing radius, the near and far contributions have the same
size and yield \eqref{eq:positive-coulomb-interpolation}; the degenerate
zero-norm cases are immediate.  This interpolation will be applied only to
positive subdensities of the plasma.
\end{proof}

\begin{proposition}
\label{prop:velocity-averaging}
Let $1<p\leq2$.  Suppose that $u^\ell$, $G^\ell$, and $g^\ell$ are locally bounded
in $L^p((0,T)\times\R_x^3\times\R_v^3)$ and satisfy
\begin{equation}\label{eq:averaging-equation}
 \partial_tu^\ell+v\cdot\nabla_xu^\ell
 =\nabla_v\cdot G^\ell+g^\ell
\end{equation}
in distributions.  Then, for every $\psi\in C_c^\infty(\R_v^3)$, the
averages
\[
 \int_{\R^3}u^\ell(t,x,v)\psi(v)\,\dd v
\]
are relatively compact in $L^p_{\mathrm{loc}}((0,T)\times\R_x^3)$.
\end{proposition}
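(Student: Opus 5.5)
The plan is to deduce the statement from the classical $L^2$ velocity-averaging lemma, extended to $L^p$ by interpolation, after localizing and regularizing in $v$. It is the standard DiPerna--Lions--Meyer / Golse--Lions--Perthame--Sentis result with a divergence-in-$v$ source. Fix $\psi\in C_c^\infty(\R^3_v)$ and a cutoff $\chi\in C_c^\infty((0,T)\times\R^3_x)$. Pick $\theta\in C_c^\infty(\R^3_v)$ with $\theta=1$ on $\supp\psi$. Setting $w^\ell=\chi\theta u^\ell$, we get
\[
 \partial_tw^\ell+v\cdot\nabla_xw^\ell
 =\nabla_v\cdot(\chi\theta G^\ell)+\tilde g^\ell,
\]
where $\tilde g^\ell=\chi\theta g^\ell-\chi\nabla_v\theta\cdot G^\ell+\theta u^\ell(\partial_t\chi+v\cdot\nabla_x\chi)$. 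All terms are now bounded in $L^p(\R^7)$ with compact support in $(t,x,v)$, and $\int w^\ell\psi\dd v=\chi\int u^\ell\psi\dd v$. It therefore suffices to show that the averages $\int w^\ell\psi\dd v$ are relatively compact in $L^p(\R^4)$.

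The key step is a fractional Sobolev gain in $(t,x)$. Take the Fourier transform in $(t,x)$ with dual variables $(\tau,k)$. For each frequency, split the velocity integral by a smooth cutoff according to whether $|\tau+v\cdot k|\le\epsilon(|\tau|+|k|)$.
\begin{itemize}
\item On the degenerate set, the measure of $v\in\supp\psi$ is $O(\epsilon)$, which gives a contribution of size $\epsilon^{1/2}$ in $L^2$.
\item Off the degenerate set, we divide by $i(\tau+v\cdot k)$. The $\nabla_v\cdot$ term is integrated by parts onto the symbol $\psi(v)/(\tau+v\cdot k)$ and the cutoff. This costs a factor $|k|/(\epsilon(|\tau|+|k|))^2$, and the $\tilde g$ term costs $1/(\epsilon(|\tau|+|k|))$.
\end{itemize}
Optimizing $\epsilon$ shows that for $p=2$ the averages lie in $H^s_{t,x}$ for some $s>0$ (e.g.\ $s=1/6$), uniformly in $\ell$. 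For $1<p<2$ I will proceed by interpolation (Bézard's or DiPerna--Lions--Meyer's argument). The degenerate part is estimated in $L^1$ with smallness $\epsilon$, and the nondegenerate part in $L^2$. Real interpolation then yields a uniform bound in $W^{s',p}$ for some $s'>0$. Alternatively, one can truncate $w^\ell$ at height $M$ and use $L^p$-equi-integrability. Since $p>1$, bounded sequences in $L^p$ over a compact support are equi-integrable in the appropriate sense via the $L^2$ result applied to truncations plus a uniform small remainder. I will use this second route, since truncation commutes badly with the equation only through a term that is controlled in $L^1$.

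Compactness then follows from Kolmogorov--Riesz. The averages are supported in a fixed compact set, bounded in $L^p$, and translations in $(t,x)$ are uniformly small in $L^p$ by the fractional regularity. The main obstacle is the $p<2$ case, where Plancherel is not available. I expect to handle it by citing the classical $L^p$ averaging lemma of DiPerna--Lions--Meyer, which covers exactly sources of the form $\nabla_v\cdot G+g$ with $G,g\in L^p$, $1<p\le2$. I would essentially invoke that result rather than reprove it in full.
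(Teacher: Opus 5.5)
Your plan is essentially the paper's: localize with cutoffs (the commutators are zero-order $L^p$ terms), invoke the DiPerna--Lions--Meyer $L^p$ averaging lemma with one velocity derivative in the source, and turn the resulting uniform fractional $(t,x)$-regularity into compactness on bounded cylinders (Rellich/Kolmogorov--Riesz). The paper adds one explicit step: it writes $\nabla_v\cdot G^\ell+g^\ell=J_vh^\ell$, with $J_v=(I-\Delta_v)^{1/2}$ and $h^\ell=J_v^{-1}(\nabla_v\cdot G^\ell+g^\ell)$ bounded in $L^p$ by the Mikhlin multipliers $J_v^{-1}\partial_{v_j}$ and $J_v^{-1}$, so that the DLM hypothesis is met literally.

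One caution: your truncation route for $1<p<2$ does not work as stated. Cutting $w^\ell$ off at height $M$ destroys the equation, since formally the commutator contains $\nabla_v w^\ell\cdot G^\ell$, which nothing controls. It is therefore the DLM citation, not that route, that actually carries the case $p<2$.
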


\begin{proof}
Localize in $(t,x,v)$.  Commutators with the cutoff contribute only
zero-order $L^p$ terms, so it is enough to treat compactly supported
sequences satisfying \eqref{eq:averaging-equation}.  Let
$J_v=(I-\Delta_v)^{1/2}$ and define
\[
 h^\ell:=J_v^{-1}(\nabla_v\cdot G^\ell+g^\ell).
\]
The Fourier multipliers $J_v^{-1}\partial_{v_j}$ and $J_v^{-1}$ are bounded
on $L^p$ for $1<p<\infty$.  Hence $(h^\ell)$ is bounded in $L^p$ and
\[
 \partial_tu^\ell+v\cdot\nabla_xu^\ell=J_vh^\ell.
\]
The $L^p$ averaging theorem of DiPerna--Lions--Meyer
\cite[Section~III]{DLM1991}, in the case of one velocity derivative on the
right-hand side, provides a positive local Besov regularity gain in $(t,x)$
for every compactly supported velocity average.  Rellich compactness on
bounded cylinders then converts that uniform gain into strong local
$L^p$ compactness.  This is the precise compactness mechanism used for the
truncated densities in \cref{prop:density-field-compactness}.
\end{proof}

\begin{remark}[Why the Coulomb singularity is admissible]
\Cref{subsec:density-field-compactness} uses only exponents $1<p<3/2$.  Uniformly with
respect to the moving center,
\[
 |x-\xi_n(t)|^{-2}\in L^p_{\mathrm{loc}}(\R_x^3)
 \quad\Longleftrightarrow\quad p<\frac32.
\]
All localized source terms in \eqref{eq:localized-transport} therefore lie
in the reflexive range of \cref{prop:velocity-averaging}.
\end{remark}

\section{Critical density and the field estimates}
\label{sec:critical-charge}

This section proves the main a priori estimates.  
\subsection{A critical density estimate}
\label{subsec:critical-density}

In the classical Vlasov--Poisson system treated in
\cite{WangCritical2026}, the divergence of the mass--momentum tensor contains
only $\rho E$, whose $L^1$ norm follows from finite energy.  Here the moving
charge contributes the additional source
$\rho(x-\xi)/|x-\xi|^3$.  The purpose of this subsection is to show that once
the $L^1$ size of this inverse-square source is known, the same
compensated-integrability mechanism still yields the critical density bound.

Define the local mass--momentum tensor
\begin{equation}\label{eq:kinetic-tensor}
 \mathbb A(t,x)
 =\begin{pmatrix}\rho&j^{\mathsf T}\\ j&\Pi\end{pmatrix}
 =\int_{\R^3}\binom1v\otimes\binom1v f(t,x,v)\,\dd v,
\end{equation}
where $j=\int vf\,\dd v$ and $\Pi=\int v\otimes vf\,\dd v$.  For every
$\zeta=(\zeta_0,\zeta')\in\R^4$,
\[
 \zeta^{\mathsf T}\mathbb A\zeta
 =\int_{\R^3}(\zeta_0+\zeta'\cdot v)^2f\,\dd v\geq0.
\]
Thus $\mathbb A$ is positive semidefinite.  The local mass and momentum
equations give
\begin{equation}\label{eq:tensor-divergence}
 \operatorname{Div}_{t,x}\mathbb A
 =\binom0{\rho E+\rho(x-\xi)/|x-\xi|^3}.
\end{equation}
For a matrix-valued field, $\operatorname{Div}$ always denotes the row-wise
divergence.

The following Euclidean formulation is
\cite[Theorem~3.1]{WangCritical2026}, where it is stated in the present form
as a consequence of Serre's compensated-integrability theory
\cite{Serre2019}.

\begin{theorem} 
\label{thm:serre}
Let
$A\in L^1(\R^4;\operatorname{Sym}_4^+)$ and suppose that
$\operatorname{Div}A$ is a finite $\R^4$-valued Radon measure.  Then
$(\det A)^{1/4}\in L^{4/3}(\R^4)$ and
\begin{equation}\label{eq:serre-four}
 \int_{\R^4}(\det A)^{1/3}\,\dd t\dd x
 \leq C\|\operatorname{Div}A\|_{\mathcal M(\R^4)}^{4/3}.
\end{equation}
\end{theorem}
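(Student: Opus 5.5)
This statement is the four-dimensional form of Serre's compensated-integrability theorem; the paper attributes it to \cite[Theorem~3.1]{WangCritical2026} and \cite{Serre2019}. The plan is to reduce to Serre's inequality for divergence-controlled positive tensors. The basic statement in \cite{Serre2019} is: if $A\in L^1(\R^n;\operatorname{Sym}_n^+)$ has compact support and $\operatorname{Div}A$ is a finite Radon measure, then
\[
 \int_{\R^n}(\det A)^{1/(n-1)}\leq C_n\|\operatorname{Div}A\|_{\mathcal M}^{n/(n-1)}.
\]
With $n=4$ this is exactly \eqref{eq:serre-four}, and $(\det A)^{1/4}\in L^{4/3}$ is a restatement of the integrability of $(\det A)^{1/3}$. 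So the work is to recall the mechanism and to remove the compact-support hypothesis.

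For compactly supported smooth $A$ with positive-definite values, I would follow Serre's optimal-transport argument. First take the Brenier map $\nabla\phi$ pushing the normalized measure $(\det A)^{1/3}\dd y$ onto the uniform measure on the unit ball. Then use the Monge--Amp\`ere relation $\det D^2\phi=c\,(\det A)^{1/3}$ together with the arithmetic--geometric inequality
\[
 \operatorname{tr}(A\,D^2\phi)\geq 4(\det A)^{1/4}(\det D^2\phi)^{1/4}.
\]
Integrating by parts gives
\[
 \int \operatorname{tr}(A D^2\phi)=-\int \nabla\phi\cdot\operatorname{Div}A\leq\|\operatorname{Div}A\|_{\mathcal M},
\]
since $|\nabla\phi|\leq1$. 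The exponent bookkeeping then yields the bound. Regularity issues here, namely Caffarelli regularity or interpreting $D^2\phi$ as its Alexandrov part, are handled as in \cite{Serre2019}. Positive semidefinite $A$ is treated by replacing it with $A+\varepsilon\,\mathrm{Id}$ on the support and letting $\varepsilon\to0$.

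For general $A\in L^1$, I would mollify, $A_\varepsilon=A*\chi_\varepsilon$. Positivity is preserved, $\|\operatorname{Div}A_\varepsilon\|_{\mathcal M}\leq\|\operatorname{Div}A\|_{\mathcal M}$, and by concavity of $\det^{1/4}$ on $\operatorname{Sym}^+_4$ we can pass to the limit using Fatou.

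The main obstacle is removing the compact support. Truncating with $\theta_R(y)=\theta(y/R)$ gives
\[
 \operatorname{Div}(\theta_RA)=\theta_R\operatorname{Div}A+A\nabla\theta_R,
\]
and the extra term has mass at most $CR^{-1}\|A\|_{L^1}$, which tends to zero. Fatou as $R\to\infty$ then gives \eqref{eq:serre-four} with a constant independent of $\|A\|_1$.
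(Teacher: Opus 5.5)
Your proposal is correct and matches the paper, which gives no independent proof and simply imports \eqref{eq:serre-four} from Serre's compensated-integrability theorem via \cite[Theorem~3.1]{WangCritical2026}; your Brenier-map and AM--GM sketch is the standard argument behind that citation, and your cutoff $\theta_R$ correctly removes the compact-support hypothesis with a constant independent of $\|A\|_{L^1}$, since the commutator $A\nabla\theta_R$ has mass $O(R^{-1}\|A\|_{L^1})$. The $A+\varepsilon\,\mathrm{Id}$ regularization and the appeal to concavity are harmless but unnecessary if you work with the Alexandrov Hessian, because the trace AM--GM inequality holds for semidefinite matrices, the positive singular part of $D^2\phi$ only helps, and Fatou needs only a.e.\ convergence of $\det A_\varepsilon$.
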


We also use the determinant estimate \cite[Lemma~3.3]{WangCritical2026}.  Its proof is repeated for convenience; readers may consult that reference directly.

\begin{lemma}\label{lem:kinetic-determinant}
Let $0\leq g\in L^1(\R^3)\cap L^\infty(\R^3)$, assume that
$|v|^2g\in L^1(\R^3)$, set $m=\int_{\R^3}g(v)\,\dd v$, and define
\[
 A_g=\int_{\R^3}\binom1v\otimes\binom1v g(v)\,\dd v.
\]
If $m>0$, then
\begin{equation}\label{eq:determinant-lower}
 \det A_g\geq c\|g\|_\infty^{-2}m^6.
\end{equation}
If $m=0$, then $A_g=0$.
\end{lemma}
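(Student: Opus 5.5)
The plan is to reduce $\det A_g$, by a Schur complement, to the determinant of the centred second-moment matrix of $g$, and then bound that determinant from below with a bathtub (rearrangement) argument that uses only $0\le g\le\|g\|_\infty$.

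\textbf{Case $m=0$.} Since $g\ge0$, $m=0$ forces $g=0$ almost everywhere, and then every entry of $A_g$ vanishes.

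\textbf{Schur complement.} Assume $m>0$ and write $M:=\|g\|_\infty>0$. Set $j=\int vg\,\dd v$, $\bar u=j/m$ and $\Pi=\int v\otimes v\,g\,\dd v$; all are finite because $|v|^2g\in L^1$. Taking the Schur complement with respect to the $(1,1)$ entry gives
\[
 \det A_g=m\,\det\Bigl(\Pi-\frac{j\otimes j}{m}\Bigr)=m\,\det\mathcal C,
 \qquad
 \mathcal C:=\int_{\R^3}(v-\bar u)\otimes(v-\bar u)\,g(v)\,\dd v .
\]
It therefore suffices to prove
\[
 \det\mathcal C\ge c\,M^{-2}m^5 .
\]

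\textbf{$\mathcal C$ is positive definite.} The matrix $\mathcal C$ is symmetric and positive semidefinite. If $e^{\mathsf T}\mathcal Ce=0$ for some unit vector $e$, then $g$ vanishes a.e. off the plane $\{(v-\bar u)\cdot e=0\}$. That plane is Lebesgue-null, so $g=0$ a.e. and $m=0$, a contradiction. Hence $\mathcal C$ has eigenvalues $\lambda_1,\lambda_2,\lambda_3>0$ with orthonormal eigenvectors $e_i$.

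\textbf{Bathtub step.} In the coordinates $w_i=(v-\bar u)\cdot e_i$, consider the quadratic form $q(v)=\sum_i w_i^2/\lambda_i$. By construction,
\[
 \int q\,g\,\dd v=\sum_i\lambda_i^{-1}\,e_i^{\mathsf T}\mathcal Ce_i=3 .
\]
The sublevel set $\{q\le r^2\}$ is an ellipsoid of volume $\tfrac{4\pi}{3}r^3(\det\mathcal C)^{1/2}$. Choose $r$ so that this volume equals $m/(2M)$. Since $g\le M$, the ellipsoid carries mass at most $m/2$, so at least mass $m/2$ lies where $q\ge r^2$. Therefore
\[
 3=\int q\,g\,\dd v\ \ge\ \frac{r^2m}{2},
 \qquad
 r^3=\frac{3m}{8\pi M(\det\mathcal C)^{1/2}} .
\]
Substituting $r$ into the inequality gives $6\ge m\,\bigl(3m/(8\pi M)\bigr)^{2/3}(\det\mathcal C)^{-1/3}$. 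Cubing and rearranging yields
\[
 \det\mathcal C\ \ge\ c\,\frac{m^5}{M^2}.
\]
Combined with $\det A_g=m\det\mathcal C$, this gives \eqref{eq:determinant-lower}.

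The main point is to use the eigen-adapted ellipsoid instead of balls. Bounding each diagonal entry separately would only give lower bounds on individual variances, not on the product $\lambda_1\lambda_2\lambda_3$. The anisotropic form $q$ turns the mass constraint $g\le M$ directly into a bound on the volume factor $(\det\mathcal C)^{1/2}$. The remaining steps are routine algebra.
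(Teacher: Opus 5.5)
Your proof is correct and follows essentially the same route as the paper. Both arguments use the Schur complement $\det A_g=m\det\mathcal C$ (your $\mathcal C$ is $m$ times the paper's normalized covariance $C_g$), nondegeneracy because a plane is Lebesgue-null, and the covariance-adapted ellipsoid combined with the identity $\int q\,g\,\dd v=3$ and the bound $g\le\|g\|_\infty$; the only cosmetic difference is that the paper fixes the ellipsoid's level (level $6$ for $C_g=\mathcal C/m$) and uses Markov to show it carries mass at least $m/2$, whereas you fix its volume at $m/(2M)$ and run the same Chebyshev inequality in the reverse direction.
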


\begin{proof}
We recall its geometric argument.
There is nothing to prove when $m=0$.  For $m>0$, let
\[
 \bar v=\frac1m\int_{\R^3}vg(v)\,\dd v,
 \qquad
 C_g=\frac1m\int_{\R^3}
 (v-\bar v)\otimes(v-\bar v)g(v)\,\dd v.
\]
The covariance matrix $C_g$ is positive definite.  Indeed, a null direction
would force $g$ to be supported almost everywhere on an affine plane, which
is impossible for a nonzero Lebesgue density.

Consider the ellipsoid
\[
 \mathcal E
 =\left\{v:(v-\bar v)^{\mathsf T}C_g^{-1}(v-\bar v)\leq6\right\}.
\]
For the probability measure $m^{-1}g(v)\,\dd v$, the mean of the quadratic
form in this definition is
\[
 \frac1m\int
 (v-\bar v)^{\mathsf T}C_g^{-1}(v-\bar v)g(v)\,\dd v
 =\operatorname{tr}(C_g^{-1}C_g)=3.
\]
Markov's inequality therefore gives
\[
 \int_{\mathcal E}g(v)\,\dd v\geq\frac m2.
\]
On the other hand,
$|\mathcal E|=|B_{\sqrt6}|(\det C_g)^{1/2}$ and
$g\leq\|g\|_\infty$, so
\[
 \frac m2\leq C\|g\|_\infty(\det C_g)^{1/2}.
\]
Hence $\det C_g\geq c\|g\|_\infty^{-2}m^2$.  Finally,
\[
 A_g=
 \begin{pmatrix}
 m&m\bar v^{\mathsf T}\\
 m\bar v&m\bar v\otimes\bar v+mC_g
 \end{pmatrix}.
\]
The Schur complement identity
$\det A_g=m^4\det C_g$, combined with the preceding covariance bound, yields
\eqref{eq:determinant-lower}.  The phase-space density bound therefore
enforces the required quantitative nondegeneracy of the kinetic tensor.
\end{proof}

\begin{proposition} 
\label{prop:source-to-density}
For every interval $I$ of length at most one,
\begin{equation}\label{eq:source-to-density}
 \int_I\int_{\R^3}\rho(t,x)^2\,\dd x\dd t
 \leq C\left(
 1+\int_I\int_{\R^3}
 \frac{\rho(t,x)}{|x-\xi(t)|^2}\,\dd x\dd t
 \right)^{4/3}.
\end{equation}
Moreover,
\begin{equation}\label{eq:source-to-field}
 \int_I\|E(t)\|_6^2\,\dd t
 \leq C\left(
 1+\int_I\int_{\R^3}
 \frac{\rho(t,x)}{|x-\xi(t)|^2}\,\dd x\dd t
 \right)^{4/3}.
\end{equation}
\end{proposition}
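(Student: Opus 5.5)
We apply \cref{thm:serre} to a time-localized version of the tensor $\mathbb A$ and use \cref{lem:kinetic-determinant} to turn the determinant into a power of $\rho$. Let $I=[a,b]$ with $|I|\leq1$. Choose a smooth cutoff $\chi(t)$ with $\chi=1$ on $I$, support in $[a-1,b+1]$ and $|\chi'|\leq2$. The slightly longer interval is harmless, because the same argument applies to an enlarged interval; alternatively one may use $\chi=\1_I$ directly, since the time boundary terms are measures.

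Set $A=\chi(t)\mathbb A(t,x)$. It lies in $L^1(\R^4;\operatorname{Sym}_4^+)$: its entries are bounded by $\rho$, $|j|$ and $\int|v|^2f\,\dd v$, and these are uniformly in $L^1_x$ by \eqref{eq:finite-energy-layer}. By \eqref{eq:tensor-divergence},
\[
 \operatorname{Div}A=\chi'(t)\binom{\rho}{j}
 +\chi\binom0{\rho E+\rho(x-\xi)/|x-\xi|^3}.
\]
Its total mass is bounded in three pieces:
\begin{itemize}
\item[(i)] $\int|\chi'|(\|\rho\|_1+\|j\|_1)\,\dd t\leq C$, using $\|j\|_1\leq\|\rho\|_1^{1/2}\bigl(\iint|v|^2f\bigr)^{1/2}$;
\item[(ii)] $\int\chi\|\rho E\|_1\,\dd t\leq C$, by \cref{lem:regular-source};
\item[(iii)] $\int\chi\int\rho|x-\xi|^{-2}\,\dd x\dd t$.
\end{itemize}
If the cutoff is $\1_I$, piece (iii) is exactly the source integral in \eqref{eq:source-to-density}. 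Hence $\|\operatorname{Div}A\|_{\mathcal M}\leq C(1+S_I)$, where $S_I$ denotes that source integral. For a classical solution with compactly supported data all these objects are finite, so \cref{thm:serre} applies.

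\Cref{lem:kinetic-determinant} with $g=f(t,x,\cdot)$, together with $\|f\|_\infty\leq C_M$, gives $\det\mathbb A\geq c\,\rho^6$ pointwise. Therefore $(\det A)^{1/3}\geq c\,\chi^{4/3}\rho^2$, and \eqref{eq:serre-four} yields
\[
 \int_I\int\rho^2\,\dd x\dd t\leq C(1+S_I)^{4/3},
\]
which is \eqref{eq:source-to-density}. For \eqref{eq:source-to-field}, Hardy--Littlewood--Sobolev gives $\|E(t)\|_6\leq C\|\rho(t)\|_2$ (the kernel $|x|^{-2}$ maps $L^2$ to $L^6$ in three dimensions). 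Squaring and integrating over $I$ reduces \eqref{eq:source-to-field} to the previous bound.

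The main obstacle is making the use of $\chi=\1_I$ rigorous. This requires $\operatorname{Div}A$ to be a finite measure including the jump terms $\delta_{a}\binom{\rho}{j}(a)$ and $\delta_b\binom{\rho}{j}(b)$. For classical solutions this follows from time continuity of $\rho$ and $j$ in $L^1$, and these boundary masses are bounded by $C$. If one prefers a smooth cutoff, the only issue is that the inverse-square source on the enlarged support is not controlled by $S_I$. The indicator formulation avoids this and is the version I would use.
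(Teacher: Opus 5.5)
Your proposal is correct and essentially matches the paper's proof: extend $\mathbb A$ by $\1_I$, bound the two endpoint jump measures and the $\rho E$ source by $C$, apply \cref{thm:serre}, convert $(\det\mathbb A)^{1/3}$ into $c\rho^2$ via \cref{lem:kinetic-determinant}, and finish with Hardy--Littlewood--Sobolev. Drop the smooth-cutoff detour, though: as you note yourself at the end, the enlarged support would bring in the inverse-square source outside $I$, so your first paragraph's claim that it is ``harmless'' is inaccurate.
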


\begin{proof}
Extend $\mathbb A$ by zero outside $I$:
$\mathbb A^e=\1_I\mathbb A$.  With $e_t=(1,0,0,0)$, its row-wise
divergence is
\begin{align}\label{eq:extended-divergence}
 \operatorname{Div}\mathbb A^e
 ={}&\1_I\binom0{\rho E+\rho(x-\xi)/|x-\xi|^3}
 +(\mathbb A e_t)(t_0)\delta_{t_0}
 -(\mathbb A e_t)(t_1)\delta_{t_1}.
\end{align}
The endpoint column is $(\rho,j)^{\mathsf T}$.  At either endpoint,
\[
 \int_{\R^3}(\rho+|j|)\,\dd x
 \leq C_M+C_M^{1/2}
 \left(\iint_{\R^6}|v|^2f\,\dd x\dd v\right)^{1/2}
 \leq C.
\]
The regular part $\rho E$ is bounded in $L^1$ by
\cref{lem:regular-source}.  Hence
\begin{align}\label{eq:DivA-bound}
 \|\operatorname{Div}\mathbb A^e\|_{\mathcal M(\R^4)}
 \leq C\left(
 1+\int_I\int_{\R^3}
 \frac{\rho(t,x)}{|x-\xi(t)|^2}\,\dd x\dd t
 \right).
\end{align}
Moreover, positivity gives
$|\mathbb A_{ij}|\leq(\mathbb A_{ii}+\mathbb A_{jj})/2$; the mass and
kinetic-energy bounds therefore imply $\mathbb A^e\in L^1(\R^4)$.

Apply \cref{thm:serre} to $\mathbb A^e$; the endpoint measures in
\eqref{eq:extended-divergence} account for its two temporal jumps.  At almost
every point where $\rho(t,x)>0$, \cref{lem:kinetic-determinant} with
$g(v)=f(t,x,v)$ and $\|f(t)\|_\infty\leq C_M$ gives
\[
 (\det\mathbb A(t,x))^{1/3}
 \geq cC_M^{-2/3}\rho(t,x)^2.
\]
Combining this inequality with \eqref{eq:DivA-bound} and
\eqref{eq:serre-four} yields \eqref{eq:source-to-density}.  The
Hardy--Littlewood--Sobolev estimate
\[
 \|E(t)\|_6\leq C\|\rho(t)\|_2
\]
then gives \eqref{eq:source-to-field}.  The remaining task is to control
the inverse-square source and remove the conditional dependence.
\end{proof}

\subsection{The electric field estimate}
\label{subsec:charge-field}

Fix an interval $I=[t_0,t_1]$, with $\delta=|I|\leq1$, and write
\begin{equation}\label{eq:QI-definition}
 Q_I:=\int_I|E(t,\xi(t))|\,\dd t.
\end{equation}
Label characteristics by their phase point $z=(x,v)$ at time $t_0$:
\[
 (X_z(t_0),V_z(t_0))=z.
\]
By integration over labels we mean the following change of variables: for
every nonnegative measurable function $\Phi$,
\begin{equation}\label{eq:flow-change-variables}
 \iint_{\R^6}\Phi(X_z(t),V_z(t))f(t_0,z)\,\dd z
 =\iint_{\R^6}\Phi(x,v)f(t,x,v)\,\dd x\dd v.
\end{equation}
This identity is the change-of-variables formula associated with the
measure-preserving characteristic flow and the constancy of $f$ along that
flow.  We write
\[
 Y_z(t)=X_z(t)-\xi(t),
 \qquad
 U_z(t)=V_z(t)-\eta(t).
\]

\begin{lemma} \label{lem:radial-source}
For every characteristic label $z$,
\begin{align}
 \int_I\frac{\dd t}{|X_z(t)-\xi(t)|^2}
 \leq{}&|V_z(t_0)-\eta(t_0)|+|V_z(t_1)-\eta(t_1)|
 \nonumber\\
 &+\int_I|E(t,X_z(t))|\,\dd t
 +\int_I|E(t,\xi(t))|\,\dd t.                \label{eq:radial-label}
\end{align}
Consequently,
\begin{align}
 \int_I\int_{\R^3}
 \frac{\rho(t,x)}{|x-\xi(t)|^2}\,\dd x\dd t\leq
 C(1+\delta)+C_M\int_I|E(t,\xi(t))|\,\dd t.       \label{eq:singular-source-conditional}
\end{align}
\end{lemma}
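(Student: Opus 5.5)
The plan is to use the radial momentum in the moving frame, $Y_z\cdot U_z/|Y_z|$. Differentiate it along a characteristic with \eqref{eq:relative-characteristics}. Dropping the label $z$, and using $\dot Y=U$,
\[
 \frac{\dd}{\dd t}\frac{Y\cdot U}{|Y|}
 =\frac{|U|^2}{|Y|}-\frac{(Y\cdot U)^2}{|Y|^3}
 +\frac{Y}{|Y|}\cdot\bigl(E(t,X)-E(t,\xi)\bigr)+\frac{1}{|Y|^2}.
\]
The first two terms combine to $|U_\perp|^2/|Y|\geq0$, where $U_\perp$ is the component of $U$ orthogonal to $Y$. The Coulomb term contributes exactly the positive quantity $|Y|^{-2}$. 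Discarding the angular term and integrating over $I=[t_0,t_1]$ gives
\[
 \int_I\frac{\dd t}{|Y|^2}\leq\Bigl[\frac{Y\cdot U}{|Y|}\Bigr]_{t_0}^{t_1}
 +\int_I|E(t,X)|\,\dd t+\int_I|E(t,\xi)|\,\dd t.
\]
The boundary term is bounded by $|U(t_0)|+|U(t_1)|$, which is \eqref{eq:radial-label}.

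One technical point is the singularity at $Y=0$. For the classical separated solutions under consideration, characteristics starting off the charge never reach it, because $|U|^2+2/|Y|$ stays finite by \cref{lem:moving-cancellation} on bounded time intervals. So the computation is legitimate for $f_0$-a.e. label, namely every label in the support. Any residual issue can be handled by replacing $|Y|$ with $(|Y|^2+\epsilon^2)^{1/2}$ and letting $\epsilon\to0$ by monotone convergence.

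To get \eqref{eq:singular-source-conditional}, multiply \eqref{eq:radial-label} by $f(t_0,z)$ and integrate over labels. On the left, \eqref{eq:flow-change-variables} with Fubini turns the integral into $\int_I\int\rho|x-\xi|^{-2}\,\dd x\dd t$. On the right, the two endpoint terms become $\iint|v-\eta(t_i)|f(t_i)\,\dd x\dd v$. By Cauchy--Schwarz this is at most $C_M^{1/2}H_2^{1/2}\leq C$, using \eqref{eq:finite-energy-layer}. The term $\int_I|E(t,X_z)|$ becomes $\int_I\int\rho|E|\,\dd x\dd t\leq C\delta$ by \cref{lem:regular-source}. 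The last term does not depend on $z$, so it is multiplied by the total mass $\|f_0\|_1\leq C_M$. Altogether the right side is $C+C\delta+C_MQ_I$.

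There is no real obstacle here. The only delicate step is justifying the differentiation near the singular point and the label integration, and the separation and classical regularity make that routine.
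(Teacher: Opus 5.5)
Your proposal is correct and essentially identical to the paper's proof. The paper likewise differentiates the radial momentum $p_z=U_z\cdot Y_z/|Y_z|$, drops the nonnegative angular term $(|U_z|^2-p_z^2)/|Y_z|$, and integrates against $f(t_0,z)\,\dd z$ with the same endpoint, $\rho E$, and total-mass bounds; your remark that classical separated characteristics never reach the charge is a justification the paper leaves implicit.
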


\begin{proof}
Set
\[
 e_z(t)=\frac{Y_z(t)}{|Y_z(t)|},
 \qquad
 p_z(t)=U_z(t)\cdot e_z(t).
\]
Since $\dot Y_z=U_z$,
\[
 \dot e_z
 =\frac{U_z}{|Y_z|}
  -\frac{Y_z(Y_z\cdot U_z)}{|Y_z|^3}
 =\frac{U_z-p_ze_z}{|Y_z|}.
\]
Using \eqref{eq:relative-characteristics}, we obtain
\begin{align*}
 \dot p_z
 &=\dot U_z\cdot e_z+U_z\cdot\dot e_z\\
 &=\frac{|U_z|^2-p_z^2}{|Y_z|}
   +\frac1{|Y_z|^2}
   +\bigl(E(t,X_z)-E(t,\xi)\bigr)\cdot e_z.
\end{align*}
The first term on the last line is nonnegative.  Hence
\[
 \frac1{|Y_z|^2}
 \leq\dot p_z+|E(t,X_z)|+|E(t,\xi)|.
\]
Time integration and the inequality $|p_z|\leq|U_z|$ prove
\eqref{eq:radial-label}.

It remains to integrate this estimate against $f(t_0,z)\,\dd z$.  By
\eqref{eq:flow-change-variables}, the left-hand side becomes
\[
 \int_I\int_{\R^3}\frac{\rho(t,x)}{|x-\xi(t)|^2}\,\dd x\dd t.
\]
At either endpoint, Cauchy--Schwarz and \eqref{eq:finite-energy-layer} give
\[
 \iint f(t_0,z)|U_z(t_i)|\,\dd z
 =\iint f(t_i,x,v)|v-\eta(t_i)|\,\dd x\dd v
 \leq C,
 \qquad i=0,1.
\]
The plasma-trajectory field term is
\[
 \iint f(t_0,z)\int_I|E(t,X_z(t))|\,\dd t\dd z
 =\int_I\int_{\R^3}\rho(t,x)|E(t,x)|\,\dd x\dd t
 \leq C\delta
\]
by \eqref{eq:rhoE-time}.  Finally, the charge-trajectory field term equals
\[
 C_M\int_I|E(t,\xi(t))|\,\dd t.
\]
Combining the endpoint, plasma-field, and charge-field contributions gives
\eqref{eq:singular-source-conditional}.  The integrated radial identity has
therefore isolated the sole conditional quantity, namely the field
accumulated on the charge.
\end{proof}

Combining \eqref{eq:singular-source-conditional} with
\cref{prop:source-to-density} gives
\begin{equation}\label{eq:conditional-density}
 \int_I\int_{\R^3}\rho(t,x)^2\,\dd x\dd t
 \leq C\left(1+\int_I|E(t,\xi(t))|\,\dd t\right)^{4/3},
\end{equation}
and
\begin{equation}\label{eq:conditional-field}
 \int_I\|E(t)\|_6^2\,\dd t
 \leq C\left(1+\int_I|E(t,\xi(t))|\,\dd t\right)^{4/3}.
\end{equation}
These estimates remain conditional on the field accumulated along the
charge.  The closure below uses only an averaged field estimate over
transported labels, not pointwise control along every characteristic.

\begin{proposition} 
\label{prop:charge-closure}
There exist $\delta_0>0$ and $L>0$, depending only on $C_M$ and the initial
energy, such that
\begin{equation}\label{eq:charge-short-bound}
 \int_I|E(t,\xi(t))|\,\dd t\leq L
 \qquad\text{whenever }|I|\leq\delta_0.
\end{equation}
No smallness assumption on the total plasma mass is required.
\end{proposition}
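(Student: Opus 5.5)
We follow the strategy announced in the introduction: label characteristics at $t_0$, split them by relative velocity, estimate the low part directly, and show that only a small transported mass contributes to the high part. At each time $t\in I$ write $E(t,\xi(t))=\int\frac{\xi-y}{|\xi-y|^3}\rho(t,y)\,\dd y$. This gives
\[
 |E(t,\xi(t))|\le\iint_{\R^6}\frac{f(t_0,z)}{|Y_z(t)|^2}\,\dd z ,
\]
where we integrated over labels using \eqref{eq:flow-change-variables}. Fix $R\ge1$, to be chosen large, and let
\[
 G=\{z:\ \ka(t_0,z)\le R\}.
\]
Also let $B_t$ be the set of labels whose weight stays below $2R$ up to time $t$. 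Splitting the labels into $B_t$ and its complement gives the low and high parts.

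\emph{Low part.} On $B_t$ we have $|U_z(t)|\le 2R$. The transported subdensity
\[
 \sigma_t(x)=\int f(t,x,v)\1_{B_t}\,\dd v
\]
then satisfies $0\le\sigma_t\le\rho(t)$ and $\sigma_t\le C C_M R^3$, because the $v$-integration runs over a ball of radius $2R$ around $\eta(t)$. \Cref{lem:positive-coulomb-interpolation} and $\|\rho\|_{5/3}\le C$ give
\[
 \int\frac{\sigma_t(y)}{|\xi-y|^2}\,\dd y\le C R^{4/3}.
\]
Integrating over $I$, the low part contributes at most $CR^{4/3}\delta$ to $Q_I$.

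\emph{High part.} A label lies outside $B_t$ in two ways. Either $z\notin G$, which is a set of mass at most
\[
 \iint_{\ka(t_0)>R}f(t_0)\le H_2(t_0)/R^2\le C/R^2 .
\]
Or $z\in G$ and the weight has increased by at least $R$ on $[t_0,t]$. In the second case, \eqref{eq:moving-weight-derivative} gives
\[
 R\le\int_I|E(t,X_z)|\,\dd t+Q_I .
\]
Let $\mathcal B$ be the set of all such labels. Chebyshev applied to \eqref{eq:rhoE-time} bounds the mass of labels with $\int_I|E(X_z)|\ge R/2$ by $C\delta/R$. The remaining labels can exist only if $Q_I\ge R/2$. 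So, provided $Q_I<R/2$, the exceptional set $\mathcal B$ has mass
\[
 m_{\mathcal B}\le C R^{-2}+C\delta R^{-1}.
\]
Apply \eqref{eq:radial-label} to each $z\in\mathcal B$ and integrate against $f(t_0,z)\1_{\mathcal B}$ instead of $f(t_0,z)$. The endpoint and plasma-field terms are bounded by $C(1+\delta)$, exactly as in \cref{lem:radial-source}. The charge term is now $m_{\mathcal B}Q_I$ instead of $C_MQ_I$. Hence the high part contributes at most $C+m_{\mathcal B}Q_I$.

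\emph{Closure.} Combining the two parts, under the assumption $Q_J<R/2$ on a subinterval $J\subseteq I$ we get
\[
 Q_J\le CR^{4/3}\delta+C+(CR^{-2}+C\delta R^{-1})Q_J .
\]
Choose $R$ large so that the coefficient of $Q_J$ is at most $1/2$ for $\delta\le1$, and also so that $R/2>2(C+1)$. Then choose $\delta_0$ so that $CR^{4/3}\delta_0\le1$. This yields $Q_J\le 2(C+1)=:L<R/2$.

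For the first-hitting argument, consider the map $s\mapsto Q_{[t_0,s]}$. It is continuous for classical solutions and vanishes at $s=t_0$. So the strict bound $Q<R/2$ holds on an initial segment, where the estimate gives $Q\le L<R/2$. By continuity the value $R/2$ can never be reached on $I$. Hence $Q_I\le L$ whenever $|I|\le\delta_0$, and the constants depend only on $C_M$ and the energy.

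\emph{Main obstacle.} The delicate step is that $B_t$, and hence $\mathcal B$, depends on $t$. The radial identity must be applied on the whole of $I$ to a set that is fixed in time. Our resolution is to take $\mathcal B$ to be the time-independent set of labels that ever leave $B_t$ for some $t\in I$. Then the complement of $B_t$ is contained in $\mathcal B$ for every $t$. It remains to check that the Chebyshev bound for $\mathcal B$ uses only the integral of $|E(t,X_z)|$ over all of $I$, which is exactly what \eqref{eq:moving-weight-derivative} provides.
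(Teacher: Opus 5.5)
Your proof is correct. It uses the same mechanism as the paper: label at $t_0$, split by relative velocity, bound the low part with \cref{lem:positive-coulomb-interpolation}, use \eqref{eq:moving-weight-derivative} and Chebyshev to show that the exceptional labels carry small mass, and apply \eqref{eq:radial-label} only on that set, so that $Q$ is multiplied by the exceptional mass. The difference is in what you feed into this mechanism, and your choice is more elementary.

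The paper scales the threshold as $\Lambda(1+Q_I)$. It controls the plasma-line field through the averaged bound \eqref{eq:averaged-characteristic-field}, which rests on \eqref{eq:conditional-field} and hence on the compensated-integrability estimate of \cref{prop:source-to-density} and \cref{thm:serre}. This gives the superlinear inequality \eqref{eq:closure-explicit}, whose factors decay in $Q_I$, and it is closed by first hitting.

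You fix the threshold $R$ and impose $Q_J<R/2$ as a bootstrap hypothesis. For both the Chebyshev step and the plasma-line term on $\mathcal B$, you need only the energy-level bound $\int_I\int\rho|E|\,\dd x\dd t\le C\delta$ from \cref{lem:regular-source}. Together with $H_2(t_0)\le C$, this gives $m_{\mathcal B}\le CR^{-2}+C\delta R^{-1}$. The resulting inequality is linear in $Q_J$ with an $R$-independent additive constant, so it closes by continuity and monotonicity of $s\mapsto Q_{[t_0,s]}$.

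\emph{What your route buys.} The short-time bound on the charge-line field, and via \eqref{eq:singular-source-conditional} the inverse-square source, holds purely at the finite-energy level. Serre's theorem is then needed only for the $\int\rho^2$ part of \cref{thm:uniform-critical} and for what depends on it.

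\emph{What the paper's route buys.} It gives a label-space $L^2$ control of $\int_I|E(t,X_z(t))|\,\dd t$ and Cauchy--Schwarz factors $\mu^{1/2}$ in the exceptional mass. This structure is the template reused in \cref{prop:charge-holder,prop:near-charge}. There the contribution of the exceptional set must be small, not merely bounded, so your crude $C(1+\delta)$ bound for the endpoint and plasma-line terms would not suffice.
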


\begin{proof}
Fix $\Lambda\geq8$.

\paragraph{Step 1.}
Cauchy--Schwarz in time and the flow change of variables give
\begin{align}
 \iint_{\R^6} f(t_0,z)
 \left(\int_I|E(t,X_z(t))|\,\dd t\right)^2\dd z
 \leq
 \delta\int_I\int_{\R^3}\rho(t,x)|E(t,x)|^2\,\dd x\dd t.
\end{align}
Interpolation between the conserved $L^1$ norm and the finite-energy
$L^{5/3}$ norm gives
\[
 \sup_{t\in I}\|\rho(t)\|_{3/2}\leq C.
\]
Hence
\[
 \int_{\R^3}\rho|E|^2\,\dd x
 \leq\|\rho\|_{3/2}\|E\|_6^2
 \leq C\|E\|_6^2.
\]
Using \eqref{eq:conditional-field}, we obtain
\begin{align}
 &\iint_{\R^6} f(t_0,z)
 \left(\int_I|E(t,X_z(t))|\,\dd t\right)^2\dd z
 \leq C\delta(1+Q_I)^{4/3}.
 \label{eq:averaged-characteristic-field}
\end{align}
This is the only information needed about the field along general plasma
trajectories.

\paragraph{Step 2.}
Define the high-relative-velocity labels by
\begin{equation}\label{eq:high-label-set}
 \mathcal H_I:=
 \left\{z:\sup_{t\in I}|V_z(t)-\eta(t)|>
 \Lambda(1+Q_I)\right\}.
\end{equation}
For labels outside $\mathcal H_I$, the transported density is bounded by
$C_M\Lambda^3(1+Q_I)^3$ and is no larger than $\rho$.  Applying
\cref{lem:positive-coulomb-interpolation} to this positive subdensity gives
\begin{align}
 &\int_I\left|
 \iint_{\R^6\setminus\mathcal H_I}
 \frac{\xi(t)-X_z(t)}{|\xi(t)-X_z(t)|^3}
 f(t_0,z)\,\dd z\right|\dd t
 \leq C_\Lambda\delta(1+Q_I)^{4/3}.
 \label{eq:low-field}
\end{align}

\paragraph{Step 3.}
For every label, \eqref{eq:moving-weight-derivative} implies
\begin{align} \label{eq:weight-variation-explicit}
 \left|\ka(t,X_z(t),V_z(t))
 -\ka(t_0,X_z(t_0),V_z(t_0))\right|
 \leq\int_I|E(\tau,X_z(\tau))|\,\dd\tau+Q_I.
\end{align}
If $z\in\mathcal H_I$, then the microscopic-energy weight exceeds
$\Lambda(1+Q_I)$ at some time.  Since
$Q_I\leq\Lambda(1+Q_I)/8$, such a label must belong to
\[
 \left\{\ka(t_0,X_z(t_0),V_z(t_0))
       \geq\frac{\Lambda(1+Q_I)}4\right\}
 \cup
 \left\{\int_I|E(t,X_z(t))|\,\dd t
       \geq\frac{\Lambda(1+Q_I)}4\right\}.
\]
Chebyshev's inequality, the energy bound for $H_2(t_0)$, and
\eqref{eq:averaged-characteristic-field} therefore yield
\begin{align}
 \iint_{\mathcal H_I}f(t_0,z)\,\dd z
 \leq C_\Lambda\left[(1+Q_I)^{-2}
       +\delta(1+Q_I)^{-2/3}\right].
 \label{eq:high-label-mass}
\end{align}
Thus the exceptional population becomes small precisely when the unknown
charge-line field becomes large.

\paragraph{Step 4.}
By positivity, Fubini's theorem, and \eqref{eq:radial-label},
\begin{align*}
 &\int_I\left|
 \iint_{\mathcal H_I}
 \frac{\xi(t)-X_z(t)}{|\xi(t)-X_z(t)|^3}
 f(t_0,z)\,\dd z\right|\dd t\\
 &\quad\leq
 \iint_{\mathcal H_I}f(t_0,z)
 \left(|U_z(t_0)|+|U_z(t_1)|
 +\int_I|E(t,X_z(t))|\,\dd t+Q_I\right)\dd z.
\end{align*}
The two endpoint terms are bounded, by Cauchy--Schwarz and the energy, by
\[
 C\left(\iint_{\mathcal H_I}f(t_0,z)\,\dd z\right)^{1/2}.
\]
The plasma-trajectory field term is bounded by
\[
 \left(\iint_{\mathcal H_I}f(t_0,z)\,\dd z\right)^{1/2}
 \left[\iint_{\R^6}f(t_0,z)
 \left(\int_I|E(t,X_z(t))|\,\dd t\right)^2\dd z\right]^{1/2},
\]
while the last term is
$Q_I\iint_{\mathcal H_I}f(t_0,z)\,\dd z$.
Substituting \eqref{eq:averaged-characteristic-field} and
\eqref{eq:high-label-mass} gives
\begin{align}
 &\int_I\left|
 \iint_{\mathcal H_I}
 \frac{\xi(t)-X_z(t)}{|\xi(t)-X_z(t)|^3}
 f(t_0,z)\,\dd z\right|\dd t
 \nonumber\\
 &\qquad\leq C_\Lambda\left[
 (1+Q_I)^{-1}
 +\delta^{1/2}(1+Q_I)^{-1/3}
 +\delta(1+Q_I)^{1/3}\right].
 \label{eq:high-field}
\end{align}
The factor multiplying $Q_I$ is the mass of the exceptional population, not
the total plasma mass.

\paragraph{Step 5.}
The flow change of variables decomposes $E(t,\xi(t))$ into the contributions
from $\mathcal H_I$ and its complement.  Combining
\eqref{eq:low-field} and \eqref{eq:high-field} yields
\begin{equation}\label{eq:closure-explicit}
 Q_I\leq C_\Lambda\left[
 \delta(1+Q_I)^{4/3}+(1+Q_I)^{-1}
 +\delta^{1/2}(1+Q_I)^{-1/3}
 +\delta(1+Q_I)^{1/3}\right].
\end{equation}
Because of the superlinear first term, this inequality is not closed by a
direct absorption.  Fix $t_0$ and set
\[
 q(s):=\int_{t_0}^s|E(t,\xi(t))|\,\dd t.
\]
Choose $L>1$ so large that
$C_\Lambda(1+L)^{-1}\leq L/4$.  Next choose $\delta_0>0$ so small that
\[
 C_\Lambda\left[
 \delta_0(1+L)^{4/3}
 +\delta_0^{1/2}(1+L)^{-1/3}
 +\delta_0(1+L)^{1/3}\right]\leq\frac L2.
\]
If $q$ first reached $L$ at some $s_*$ with
$s_*-t_0\leq\delta_0$, applying \eqref{eq:closure-explicit} to the prefix
interval $[t_0,s_*]$ would give $L\leq3L/4$, a contradiction.  Thus the field
accumulated along the charge remains bounded by $L$ on every interval of
length at most $\delta_0$, as asserted.
\end{proof}

\begin{theorem} 
\label{thm:uniform-critical}
For every $T>0$, every classical separated solution satisfies
\begin{equation}\label{eq:uniform-critical}
 \int_0^T|E(t,\xi(t))|\,\dd t
 +\int_0^T\int_{\R^3}\frac{\rho(t,x)}{|x-\xi(t)|^2}\,\dd x\dd t
 +\int_0^T\int_{\R^3}\rho(t,x)^2\,\dd x\dd t
 \leq C(1+T).
\end{equation}
The constant is independent of the initial separation distance.
\end{theorem}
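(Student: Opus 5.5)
The plan is to cover $[0,T]$ by consecutive intervals of length at most $\delta_0$, with $\delta_0$ and $L$ the constants of \cref{prop:charge-closure}, and to sum the local bounds. Set $N=\lceil T/\delta_0\rceil$ and $I_k=[t_{k-1},t_k]$ with $t_k=\min(k\delta_0,T)$. The constants $\delta_0$ and $L$ depend only on $C_M$ and the initial energy. Mass, the phase-space $L^\infty$ bound and energy are conserved along the classical flow, so the finite-energy layer \eqref{eq:finite-energy-layer} holds uniformly at every left endpoint $t_{k-1}$. Hence \cref{prop:charge-closure} applies on each $I_k$ with the same constants, and gives
\[
 \int_{I_k}|E(t,\xi(t))|\,\dd t\leq L .
\]
Summing over $k$ gives $\int_0^T|E(t,\xi(t))|\,\dd t\leq NL\leq L(1+T/\delta_0)\leq C(1+T)$.

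Next, on each $I_k$ I apply \eqref{eq:singular-source-conditional} from \cref{lem:radial-source}. Since $|I_k|\leq1$, this gives
\[
 \int_{I_k}\int_{\R^3}\frac{\rho}{|x-\xi|^2}\,\dd x\dd t\leq C(1+\delta_0)+C_M L,
\]
a bound depending only on the finite-energy data. Summing the $N$ pieces yields the inverse-square term in \eqref{eq:uniform-critical} with bound $C(1+T)$.

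For the density I apply \eqref{eq:conditional-density}, equivalently \cref{prop:source-to-density}, on each $I_k$. This gives $\int_{I_k}\int\rho^2\leq C(1+L)^{4/3}$, and summing gives $C(1+T)$ again. The $4/3$ power causes no difficulty, because it acts on a locally bounded quantity and not on the global integral. That is precisely why the argument is localized before summing.

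The only point needing care is uniformity. Every constant in \cref{prop:charge-closure}, \cref{lem:radial-source} and \cref{prop:source-to-density} must depend only on $C_M$ and $\mathcal E(0)$, not on $t_{k-1}$, on the separation distance, or on higher moments. Conservation of energy for the classical separated solution, together with positivity of all four energy terms, makes $H_2(t_{k-1})$ and the other quantities in \eqref{eq:finite-energy-layer} bounded by such a $C$. The labelling at $t_{k-1}$ via \eqref{eq:flow-change-variables} is also legitimate at each restart, since $f(t_{k-1})$ has the same $L^1$ and $L^\infty$ norms as $f_0$. Separation enters only through the existence of classical solutions and never through the constants, so the bound is independent of the initial separation distance.
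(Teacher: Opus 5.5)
Your proposal is correct and follows the paper's proof: you partition $[0,T]$ into at most $1+T/\delta_0$ intervals, apply \cref{prop:charge-closure} on each, and sum the local bounds from \eqref{eq:singular-source-conditional} and \eqref{eq:conditional-density}. Your explicit remarks on uniformity are a useful addition to the paper's terse argument: $\delta_0\leq 1$, and conservation of mass, $L^\infty$ norm and energy at each restart time make the constants independent of $t_{k-1}$ and of the separation distance.
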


\begin{proof}
Partition $[0,T]$ into intervals of length at most $\delta_0$.
On every such interval, \cref{prop:charge-closure} controls the field along
the charge.  Summation over the partition controls the first term in
\eqref{eq:uniform-critical}; summing
\eqref{eq:singular-source-conditional} and
\eqref{eq:conditional-density} controls the remaining two terms.  The number of subintervals is at most
$1+T/\delta_0$.  Since $L$ and $\delta_0$ depend only on the finite-energy
data, summing the short-interval estimates gives the bound $C(1+T)$,
independently of the initial separation.
\end{proof}

\section{Propagation of energy moments}
\label{sec:all-moments}

The a priori analysis for the classical separated solutions under consideration has been completed in \cref{sec:critical-charge}. In particular, \cref{thm:uniform-critical} provides the time-integrable density and charge-line field coefficients needed in the moment inequality. Combining these estimates with weighted kinetic interpolation and Gronwall's inequality, we propagate the prescribed moment of order \(K>2\) without using any higher auxiliary moment. Since all constants are independent of the initial separation, the resulting estimate can be used in the approximation argument of \cref{sec:weak-construction}.

\begin{lemma}
\label{lem:weighted-kinetic-interpolation}
Define
\[
 h_{K-1}(t,x)
 :=\int_{\R^3}\ka(t,x,v)^{K-1}f(t,x,v)\,\dd v.
\]
Then, for almost every $t$,
\begin{align}
 \|\rho(t)\|_{(K+3)/3}
 &\leq C(1+H_K(t))^{3/(K+3)},                  \label{eq:rho-weighted}\\
 \|h_{K-1}(t)\|_{(K+3)/(K+2)}
 &\leq C(1+H_K(t))^{(K+2)/(K+3)}.              \label{eq:h-weighted}
\end{align}
\end{lemma}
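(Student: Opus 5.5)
The plan mimics the kinetic interpolation of \cref{lem:regular-source}, using the weight $\ka$ in place of $|v|$ and splitting velocities around the moving center $\eta(t)$. Fix $t$ and $x$, and write $w=v-\eta(t)$. Since
\[
 \ka(t,x,v)\geq\max\{1,|v-\eta(t)|\},
\]
we have $\ka^K\geq|w|^K$, and $\ka\geq1$ everywhere. Set
\[
 h_K(t,x):=\int_{\R^3}\ka^Kf\,\dd v,
 \qquad\text{so that}\qquad
 \int_{\R^3}h_K(t,x)\,\dd x=H_K(t).
\]

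For \eqref{eq:rho-weighted}, split the velocity integral for $\rho(t,x)$ into $|w|\leq R$ and $|w|>R$. Using $f\leq C_M$ on the first region and $1\leq|w|^K R^{-K}\leq\ka^K R^{-K}$ on the second gives
\[
 \rho\leq CC_MR^3+R^{-K}h_K.
\]
Optimizing with $R=h_K^{1/(K+3)}$ yields the pointwise bound $\rho\leq Ch_K^{3/(K+3)}$. Raising it to the power $(K+3)/3$ and integrating in $x$ gives
\[
 \|\rho\|_{(K+3)/3}^{(K+3)/3}\leq CH_K,
\]
which is \eqref{eq:rho-weighted}. The factor $1+H_K$ simply covers the degenerate case $h_K=0$.

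The second estimate \eqref{eq:h-weighted} is harder, because $\ka$ is not bounded by $1+|w|$. It also contains the singular part $2/|x-\xi(t)|$, which is independent of $v$. I would not separate the spatial singularity; instead I would work entirely with the level sets of $\ka$ in $v$. Set
\[
 a(x):=\left(1+\frac{2}{|x-\xi|}\right)^{1/2},
 \qquad\text{so that}\qquad
 \ka^2=a^2+|w|^2.
\]
Split $h_{K-1}$ into the regions $\ka\leq R$ and $\ka>R$. On $\ka>R$, the bound $\ka^{K-1}\leq R^{-1}\ka^K$ gives a contribution of at most $R^{-1}h_K$. On $\ka\leq R$, the velocity set is contained in the ball $\{|w|\leq R\}$, and on it $\ka^{K-1}\leq R^{K-1}$, so this contribution is at most $CC_MR^{K+2}$.

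Optimizing with $R=h_K^{1/(K+3)}$ gives
\[
 h_{K-1}\leq Ch_K^{(K+2)/(K+3)}.
\]
Raising this to the power $(K+3)/(K+2)$ and integrating in $x$ yields \eqref{eq:h-weighted}. The key point is that the low region needs only the volume bound $|\{\ka\leq R\}|\leq CR^3$, which holds regardless of $a(x)$. If $a(x)>R$, the set is empty and the low contribution vanishes, so the bound holds trivially there.

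The main thing to check is that $R$ can be chosen pointwise in $x$ without difficulty, and that the case where $h_K(x)$ is tiny (so $R<1\leq a$) causes no harm. In that case the low set is empty, and the high part equals $h_{K-1}\leq h_K$ because $\ka\geq1$. That gives $h_{K-1}\leq h_K$, so
\[
 h_{K-1}^{(K+3)/(K+2)}\leq h_K^{(K+3)/(K+2)}.
\]
When $h_K\leq1$, this is at most $h_K$, which integrates to $H_K$. Combining the two cases gives the stated bounds with $1+H_K(t)$. They hold for every $t$, and hence for almost every $t$, for the classical solutions considered here.
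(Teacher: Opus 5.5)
Your proof is correct. The first estimate follows the paper's argument exactly. For \eqref{eq:h-weighted} you take a different and shorter route than the paper.

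The paper first bounds $\ka^{K-1}\le C(1+|v-\eta|^{K-1}+|x-\xi|^{-(K-1)/2})$ and then treats the three resulting pieces separately:
\begin{itemize}
\item $\rho$ by interpolation between $L^1$ and $L^{(K+3)/3}$;
\item the kinetic part $\int|v-\eta|^{K-1}f\,\dd v$ by a separate velocity splitting;
\item the singular part $|x-\xi|^{-(K-1)/2}\rho$ by the H\"older factorization \eqref{eq:singular-factorization}, which combines $\int|x-\xi|^{-K/2}\rho\,\dd x\le CH_K$ with \eqref{eq:rho-weighted}.
\end{itemize}

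You instead split at the level sets of $\ka$ itself. The key fact is that $\{v:\ka\le R\}$ is a ball in $v$ of radius $(R^2-a(x)^2)_+^{1/2}\le R$, so its volume is $O(R^3)$ uniformly in $x$. This gives the pointwise inequality $h_{K-1}\le CC_M^{1/(K+3)}h_K^{(K+2)/(K+3)}$ in one step, and the spatial singularity never has to be isolated.

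Your version is more economical and yields the homogeneous bound $CH_K^{(K+2)/(K+3)}$. The paper's version is more modular: it uses only that the weight is dominated by its kinetic and potential parts, not the exact geometry of its velocity sublevel sets.

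Your final case discussion ($R<a(x)$, or $h_K$ small) is unnecessary. The inequality $h_{K-1}\le CC_MR^{K+2}+R^{-1}h_K$ holds for every $R>0$, with the low part simply vanishing when $R<a(x)$, so the pointwise optimization covers all $x$ at once. Likewise, the factor $1+H_K$ is only a harmless weakening; it is not needed to handle $h_K=0$.
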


\begin{proof}
Write $u=v-\eta(t)$.  For every $R>0$,
\[
 \rho(t,x)
 \leq C C_M R^3
 +R^{-K}\int_{\R^3}|u|^Kf(t,x,v)\,\dd v.
\]
Optimization in $R$ gives
\[
 \rho(t,x)^{(K+3)/3}
 \leq C C_M^{K/3}\int_{\R^3}|u|^Kf(t,x,v)\,\dd v.
\]
Integration in $x$ proves \eqref{eq:rho-weighted}.

Next,
\[
 h_{K-1}
 \leq C\left[
 \rho+\int_{\R^3}|u|^{K-1}f\,\dd v
 +|x-\xi|^{-(K-1)/2}\rho\right].
\]
The first term belongs to $L^{(K+3)/(K+2)}$ by interpolation between
$L^1$ and the space in \eqref{eq:rho-weighted}.  For the second term,
velocity splitting gives
\[
 \int_{\R^3}|u|^{K-1}f\,\dd v
 \leq C C_M R^{K+2}
 +R^{-1}\int_{\R^3}|u|^Kf\,\dd v.
\]
After optimizing in $R$, raising to the power $(K+3)/(K+2)$, and
integrating in $x$,
\[
 \left\|\int_{\R^3}|u|^{K-1}f\,\dd v
 \right\|_{(K+3)/(K+2)}
 \leq C(1+H_K)^{(K+2)/(K+3)}.
\]
For the singular spatial term, use the exact factorization
\begin{align}
 \left(|x-\xi|^{-(K-1)/2}\rho\right)^{(K+3)/(K+2)} =
 \left(|x-\xi|^{-K/2}\rho\right)^{
 \frac{(K+3)(K-1)}{K(K+2)}}
 \left(\rho^{(K+3)/3}\right)^{
 \frac{3}{K(K+2)}}.                         \label{eq:singular-factorization}
\end{align}
The two outer exponents sum to one.  H\"older's inequality,
\eqref{eq:rho-weighted}, and
\[
 \int_{\R^3}|x-\xi|^{-K/2}\rho(t,x)\,\dd x\leq C H_K(t)
\]
show that the integral of the left-hand side of
\eqref{eq:singular-factorization} is bounded by $C(1+H_K)$.  Raising this
bound to the power $(K+2)/(K+3)$ yields \eqref{eq:h-weighted}.  The three
components of $h_{K-1}$ therefore belong to the same dual space with the
asserted dependence on $H_K$.
\end{proof}

\begin{lemma}
\label{lem:field-K}
For almost every $t$,
\begin{equation}\label{eq:E-K+3}
 \|E(t)\|_{K+3}
 \leq C\|\rho(t)\|_2^{2/3}
 (1+H_K(t))^{1/(K+3)}.
\end{equation}
\end{lemma}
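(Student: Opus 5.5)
The estimate \eqref{eq:E-K+3} should follow from a single Hardy--Littlewood--Sobolev step combined with H\"older interpolation of $\rho$ between $L^2$ and the weighted space of \cref{lem:weighted-kinetic-interpolation}. Since $|E(t,x)|\leq\int|x-y|^{-2}\rho(t,y)\,\dd y$, the HLS inequality for the Riesz potential of order one gives
\[
 \|E(t)\|_{q}\leq C\|\rho(t)\|_p,
 \qquad \frac1q=\frac1p-\frac13,\quad 1<p<3 .
\]
With $q=K+3$ this forces
\[
 \frac1p=\frac13+\frac1{K+3},
 \qquad\text{that is}\qquad
 p=\frac{3(K+3)}{K+6}\in(1,3).
\]
The bound $p>1$ is obvious, and $p<3$ holds because $K+3<K+6$. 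So the HLS step is admissible for every $K>2$.

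Next I interpolate $\|\rho\|_p$ between $\|\rho\|_2$ and $\|\rho\|_{(K+3)/3}$. I look for $\theta\in[0,1]$ with
\[
 \frac1p=\frac{\theta}{2}+(1-\theta)\frac{3}{K+3}.
\]
The choice $\theta=2/3$ works exactly: the right-hand side is then $\frac13+\frac1{K+3}$. Note that $1/p$ lies between $1/2$ and $3/(K+3)$ whichever of these is larger, so this is a genuine convex combination whether $K\geq3$ or $2<K<3$; the case $K=3$, where $p=2$, is trivial. H\"older's inequality therefore gives
\[
 \|\rho\|_p\leq\|\rho\|_2^{2/3}\|\rho\|_{(K+3)/3}^{1/3}.
\]

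Finally I insert \eqref{eq:rho-weighted}, namely $\|\rho\|_{(K+3)/3}\leq C(1+H_K)^{3/(K+3)}$. Raising it to the power $1/3$ yields the factor $(1+H_K)^{1/(K+3)}$. Combining the three displays proves \eqref{eq:E-K+3} for almost every $t$, namely those at which \eqref{eq:rho-weighted} holds.

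There is no real obstacle. The only points to check are the exponent bookkeeping above and that $p$ stays in the HLS range $(1,3)$, which it does for all $K>2$.
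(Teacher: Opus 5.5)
Your proposal is correct and follows the paper's proof: Hardy--Littlewood--Sobolev from $L^{3(K+3)/(K+6)}$ to $L^{K+3}$, then H\"older interpolation with weight $2/3$ on $\|\rho\|_2$ and $1/3$ on $\|\rho\|_{(K+3)/3}$, then \eqref{eq:rho-weighted}. Your exponent bookkeeping and your check that $p$ lies in the admissible range $(1,3)$ are both correct.
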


\begin{proof}
The Hardy--Littlewood--Sobolev inequality and interpolation between
$L^2$ and $L^{(K+3)/3}$ give the direct chain
\begin{align*}
 \|E(t)\|_{K+3}
 &\leq C\|\rho(t)\|_{\frac{3(K+3)}{K+6}}\\
 &\leq C\|\rho(t)\|_2^{2/3}
          \|\rho(t)\|_{(K+3)/3}^{1/3}\\
 &\leq C\|\rho(t)\|_2^{2/3}
          (1+H_K(t))^{1/(K+3)},
\end{align*}
where the last line uses \eqref{eq:rho-weighted}.  This interpolation
chain yields \eqref{eq:E-K+3} and places the field exactly in the Lebesgue
space dual to the weighted density from
\cref{lem:weighted-kinetic-interpolation}.
\end{proof}

\begin{theorem} 
\label{thm:moving-smooth-estimate}
Let $K>2$.  Every classical separated solution satisfies
\begin{equation}\label{eq:smooth-moment-bound}
 \sup_{0\leq t\leq T}H_K(t)\leq C_{T,K},
\end{equation}
where the constant is independent of the initial separation distance.
\end{theorem}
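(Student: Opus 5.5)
We differentiate $H_K$ along the flow and close a linear Gronwall inequality whose coefficients are time-integrable by \cref{thm:uniform-critical}. For a classical separated solution the flow is measure preserving and $f$ is transported. By \eqref{eq:flow-change-variables} and \cref{lem:moving-cancellation},
\[
 \frac{\dd}{\dd t}H_K(t)
 =K\iint_{\R^6}\ka^{K-1}\,\frac{(v-\eta)\cdot\bigl(E(t,x)-E(t,\xi(t))\bigr)}{\ka}\,f\,\dd x\dd v .
\]
Here we use $\frac{\dd}{\dd t}\ka^K=K\ka^{K-1}\dot\ka$ along characteristics, and $\dot\ka$ is computed as in the proof of \cref{lem:moving-cancellation}. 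The singular Coulomb acceleration has cancelled, and $|v-\eta|\le\ka$. Hence
\[
 \Bigl|\frac{\dd}{\dd t}H_K\Bigr|
 \le K\int_{\R^3}h_{K-1}|E|\,\dd x+K|E(t,\xi(t))|\,H_{K-1}(t).
\]
Differentiability of $H_K$ for the smooth approximations is justified by the compact support, which is preserved on bounded time intervals, and by separation from the charge. If separation is only lost later, we argue in integrated form along characteristics.

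\textbf{The two terms.} For the charge-line term, use $H_{K-1}\le H_K$, since $\ka\ge1$. This gives the contribution $K|E(t,\xi(t))|H_K$. For the plasma-field term, the exponents $(K+3)/(K+2)$ and $K+3$ are conjugate. Hölder, \eqref{eq:h-weighted} and \eqref{eq:E-K+3} give
\[
 \int h_{K-1}|E|\,\dd x
 \le C(1+H_K)^{(K+2)/(K+3)}\,\|\rho\|_2^{2/3}(1+H_K)^{1/(K+3)}
 =C\|\rho(t)\|_2^{2/3}(1+H_K).
\]
This is exactly linear in $1+H_K$, which is why the weighted interpolation was tuned to these exponents.

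\textbf{Gronwall.} Together,
\[
 \frac{\dd}{\dd t}(1+H_K)\le C\bigl(\|\rho(t)\|_2^{2/3}+|E(t,\xi(t))|\bigr)(1+H_K).
\]
By Hölder in time and \eqref{eq:uniform-critical},
\[
 \int_0^T\|\rho\|_2^{2/3}\,\dd t\le T^{2/3}\Bigl(\int_0^T\|\rho\|_2^2\,\dd t\Bigr)^{1/3}\le C(1+T),
 \qquad
 \int_0^T|E(t,\xi)|\,\dd t\le C(1+T).
\]
Gronwall then yields $1+H_K(t)\le(1+H_K(0))\exp\bigl(C(1+T)\bigr)$, which is \eqref{eq:smooth-moment-bound}. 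The constants in \cref{lem:weighted-kinetic-interpolation} and \cref{lem:field-K} depend only on $C_M$, $K$ and the energy, and \cref{thm:uniform-critical} is separation independent, so $C_{T,K}$ is independent of the separation.

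\textbf{Main obstacle.} The delicate step is verifying that \cref{lem:weighted-kinetic-interpolation} holds with exactly linear total growth. The singular piece $|x-\xi|^{-(K-1)/2}\rho$ must be controlled by $H_K$ without extra powers, and this uses $|x-\xi|^{-K/2}\le C\ka^K$ inside $H_K$. Any slack there would produce a superlinear ODE and only local-in-time bounds. A secondary technical point is the rigorous differentiation of $H_K$ when the support approaches $\xi$. To handle it, I would first work with the weight $\ka$ truncated at a large level, or directly with the integrated identity $\ka^K(t)-\ka^K(0)=\int_0^t K\ka^{K-1}\dot\ka\,\dd s$ on each characteristic. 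I would then integrate over labels and apply Gronwall in integral form.
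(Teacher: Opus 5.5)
Your proof is correct and follows the paper's argument. You differentiate $H_K$ along the flow using the cancellation of \cref{lem:moving-cancellation}, pair $h_{K-1}$ with $E$ in the conjugate exponents $(K+3)/(K+2)$ and $K+3$ via \cref{lem:weighted-kinetic-interpolation,lem:field-K} to get the bound $C\|\rho(t)\|_2^{2/3}(1+H_K)$, which is linear in $1+H_K$, and close by Gronwall with time-integrable coefficients from \cref{thm:uniform-critical}. Your only deviation is bounding $H_{K-1}\le H_K$ directly from $\ka\ge1$; the paper instead splits into the cases $K\le3$ and $K>3$ and interpolates between $H_2$ and $H_K$. Both bounds suffice for the linear Gronwall, and yours is simpler.
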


\begin{proof}
By H\"older's inequality and \cref{lem:weighted-kinetic-interpolation,lem:field-K},
\begin{align}
 \int_{\R^3}|E(t,x)|h_{K-1}(t,x)\,\dd x
 &\leq \|E(t)\|_{K+3}
        \|h_{K-1}(t)\|_{(K+3)/(K+2)}
 \nonumber\\
 &\leq C\|\rho(t)\|_2^{2/3}(1+H_K(t)).       \label{eq:field-h-pairing}
\end{align}
Differentiate $H_K$ along the characteristic flow and use
\eqref{eq:moving-weight-derivative} together with
\eqref{eq:field-h-pairing}:
\begin{equation}\label{eq:HK-differential}
 \frac{\dd}{\dd t}H_K(t)
 \leq C\|\rho(t)\|_2^{2/3}(1+H_K(t))
   +C|E(t,\xi(t))|H_{K-1}(t).
\end{equation}
If $2<K\leq3$, then $K-1\leq2$, so the energy controls $H_{K-1}$.  If
$K>3$, interpolation between $H_2$ and $H_K$ gives
\begin{equation}\label{eq:HKminus1}
 H_{K-1}
 \leq H_2^{1/(K-2)}H_K^{(K-3)/(K-2)}
 \leq C(1+H_K).
\end{equation}
Thus \eqref{eq:HK-differential} has an integrable coefficient on every finite
interval, because \cref{thm:uniform-critical} gives
\[
 \int_0^T\|\rho(t)\|_2^{2/3}\,\dd t
 \leq T^{2/3}
 \left(\int_0^T\|\rho(t)\|_2^2\,\dd t\right)^{1/3}
 \leq C(1+T)
\]
and, by \cref{thm:uniform-critical},
$\int_0^T|E(t,\xi(t))|\,\dd t\leq C(1+T)$.  Gronwall's inequality then
establishes \eqref{eq:smooth-moment-bound} without invoking any moment above
the prescribed order $K$.
\end{proof}

Combining \cref{thm:uniform-critical,thm:moving-smooth-estimate}, every
smooth separated solution with prescribed order $K>2$ satisfies
\begin{align}
 &\int_0^T|E(t,\xi(t))|\,\dd t
 +\int_0^T\int_{\R^3}\frac{\rho(t,x)}{|x-\xi(t)|^2}\,\dd x\dd t
 \nonumber\\
 &\qquad
 +\int_0^T\int_{\R^3}\rho(t,x)^2\,\dd x\dd t
 +\sup_{0\leq t\leq T}H_K(t)
 \leq C_{T,K}.                                  \label{eq:uniform-smooth-package}
\end{align}

\section{Construction of global weak solutions and energy conservation}
\label{sec:weak-construction}

Fix $K>2$.  We approximate $f_0$ by smooth compactly supported data
separated from the charge and solve the original singular system. The
estimates of \cref{sec:critical-charge,sec:all-moments} are uniform in the
artificial separation.  Compactness of the charge and the plasma, together
with a shrinking-tube estimate, then allows the separation to vanish.

\subsection{Approximation by smooth separated classical solutions}
\label{subsec:smooth-approximation}

Set
\[
 \ka_0(x,v):=
 \left(1+|v-\eta_0|^2+\frac{2}{|x-\xi_0|}\right)^{1/2}.
\]
Since $\ka_0^Kf_0\in L^1(\R^6)$, tail convergence and absolute
continuity of the integral allow us to choose $R_n\uparrow\infty$ and
$r_n\downarrow0$ such that
\begin{equation}\label{eq:cutoff-error}
 \iint_{\{|x|+|v|>R_n\}\cup\{|x-\xi_0|<3r_n\}}
 \ka_0(x,v)^K f_0(x,v)\,\dd x\dd v\leq\frac1n.
\end{equation}
Let $\chi_n^\infty\in C_c^\infty(\R^6)$ and
$\chi_n^\xi\in C^\infty(\R^3)$ satisfy
$0\leq\chi_n^\infty,\chi_n^\xi\leq1$,
\[
 \chi_n^\infty=1\quad\text{on }\{|x|+|v|\leq R_n\},
 \qquad
 \supp\chi_n^\infty\subset\{|x|+|v|<2R_n\},
\]
and
\[
 \chi_n^\xi=0\quad\text{on }\{|x-\xi_0|\leq2r_n\},
 \qquad
 \chi_n^\xi=1\quad\text{on }\{|x-\xi_0|\geq3r_n\}.
\]
Define
\[
 g_{0,n}(x,v):=\chi_n^\infty(x,v)\chi_n^\xi(x)f_0(x,v).
\]
Thus $g_{0,n}$ has compact phase-space support and vanishes on
$B(\xi_0,2r_n)\times\R^3$.  Let
$\phi\in C_c^\infty(\R^6)$ be a nonnegative standard mollifier supported in
the unit ball, and put
$\phi_\varepsilon(z)=\varepsilon^{-6}\phi(z/\varepsilon)$.  Choose
$0<\varepsilon_n<r_n/4$ so that
\begin{equation}\label{eq:weighted-mollification}
 \iint_{\R^6}\ka_0^K
 |g_{0,n}*\phi_{\varepsilon_n}-g_{0,n}|\,\dd x\dd v
 \leq\frac1n,
\end{equation}
and set
\begin{equation}\label{eq:approximate-initial-data}
 f_{0,n}:=g_{0,n}*\phi_{\varepsilon_n}.
\end{equation}
The restriction $\varepsilon_n<r_n/4$ ensures that mollification does not
fill the initial vacuum region: in fact,
\begin{equation}\label{eq:explicit-initial-separation}
 \inf\bigl\{|x-\xi_0|:(x,v)\in\supp f_{0,n}\bigr\}
 \geq 2r_n-\varepsilon_n>\frac{7r_n}{4}.
\end{equation}
For these data, consider the following system
\begin{equation}\label{eq:approximate-system}
\left\{
\begin{aligned}
 &\partial_t f_n+v\cdot\nabla_xf_n+
 \left(E_n(t,x)+\frac{x-\xi_n(t)}{|x-\xi_n(t)|^3}\right)
 \cdot\nabla_vf_n=0,\\
 &\rho_n(t,x)=\int_{\R^3}f_n(t,x,v)\,\dd v,
 \qquad
 E_n(t,x)=\int_{\R^3}\frac{x-y}{|x-y|^3}\rho_n(t,y)\,\dd y,\\
 &\dot\xi_n(t)=\eta_n(t),
 \qquad
 \dot\eta_n(t)=E_n(t,\xi_n(t)),\\
 &f_n(0,x,v)=f_{0,n}(x,v),
 \qquad
 \xi_n(0)=\xi_0,
 \qquad
 \eta_n(0)=\eta_0.
\end{aligned}
\right.
\end{equation}
We denote by $H_{m,n}$ and $\mathcal E_n$ the energy moments and the total
energy associated with $(f_n,\xi_n,\eta_n)$.  The next lemma verifies that
the approximate data satisfy the hypotheses of the global classical theory
and converge in every quantity needed later.

\begin{lemma}
\label{lem:approximate-data}
The functions $f_{0,n}$ satisfy
\begin{equation}\label{eq:approx-separated}
 0\leq f_{0,n}\in C_c^\infty(\R^6),
 \qquad
 \inf\bigl\{|x-\xi_0|:(x,v)\in\supp f_{0,n}\bigr\}>0,
\end{equation}
\begin{equation}\label{eq:approx-Lp-bounds}
 \|f_{0,n}\|_{L^1(\R^6)}\leq\|f_0\|_{L^1(\R^6)},
 \qquad
 \sup_n\|f_{0,n}\|_{L^\infty(\R^6)}
 \leq\|f_0\|_{L^\infty(\R^6)},
\end{equation}
and
\begin{equation}\label{eq:initial-approximation}
 \iint_{\R^6}\ka_0(x,v)^K
 |f_{0,n}(x,v)-f_0(x,v)|\,\dd x\dd v\longrightarrow0.
\end{equation}
Consequently,
\begin{align}\label{eq:initial-L1-convergence}
 f_{0,n}\to f_0~\text{strongly in }~L^1(\R^6),        
\end{align}
\begin{align}\label{eq:initial-moment-convergence}
 H_{K,n}(0)\to H_K(0),                      
\end{align}
\begin{align}\label{eq:initial-energy-convergence}
 \mathcal E_n(0)\to\mathcal E(0).           
\end{align}
\end{lemma}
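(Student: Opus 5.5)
The plan is to verify the structural properties \eqref{eq:approx-separated}--\eqref{eq:approx-Lp-bounds} directly from the construction, then prove the weighted convergence \eqref{eq:initial-approximation} by a triangle inequality, and finally read off the three consequences.

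\textbf{Structural properties.} Since $g_{0,n}$ is nonnegative, bounded and compactly supported, its mollification $f_{0,n}$ is smooth, nonnegative and compactly supported. The separation in \eqref{eq:approx-separated} is exactly \eqref{eq:explicit-initial-separation}: $g_{0,n}$ vanishes on $B(\xi_0,2r_n)\times\R^3$, and convolution enlarges the support by at most $\varepsilon_n<r_n/4$. For \eqref{eq:approx-Lp-bounds}, Young's inequality with $\int\phi_{\varepsilon}=1$ gives $\|f_{0,n}\|_p\le\|g_{0,n}\|_p\le\|f_0\|_p$ for $p=1,\infty$, because $0\le\chi_n^\infty\chi_n^\xi\le1$.

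\textbf{Weighted convergence.} Write
\[
 \iint\ka_0^K|f_{0,n}-f_0|
 \le\iint\ka_0^K|f_{0,n}-g_{0,n}|+\iint\ka_0^K|g_{0,n}-f_0|.
\]
The first term is at most $1/n$ by \eqref{eq:weighted-mollification}. For the second term, note that $0\le f_0-g_{0,n}=(1-\chi_n^\infty\chi_n^\xi)f_0$. This difference is supported in $\{|x|+|v|>R_n\}\cup\{|x-\xi_0|<3r_n\}$ and bounded there by $f_0$, so \eqref{eq:cutoff-error} bounds it by $1/n$. It is worth checking that the choice of $\varepsilon_n$ in \eqref{eq:weighted-mollification} is possible, since $\ka_0^K$ is unbounded near $x=\xi_0$. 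It is: $g_{0,n}$ is supported where $|x-\xi_0|\ge2r_n$ and in a bounded set, and for $\varepsilon<r_n/4$ both $g_{0,n}$ and its mollification live in a fixed compact set where $\ka_0^K$ is bounded. Hence $L^1$ continuity of mollification suffices. Since $\ka_0\ge1$, \eqref{eq:initial-L1-convergence} follows immediately.

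\textbf{Moment and energy convergence.} At $t=0$ the weight in $H_{K,n}(0)$ is exactly $\ka_0^K$, because $\xi_n(0)=\xi_0$ and $\eta_n(0)=\eta_0$. Hence $|H_{K,n}(0)-H_K(0)|\le\iint\ka_0^K|f_{0,n}-f_0|\to0$.

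For the energy, the charge term $\frac12|\eta_0|^2$ is identical. The remaining terms are handled as follows.
\begin{itemize}
\item \emph{Kinetic term.} Use $|v|^2\le2|v-\eta_0|^2+2|\eta_0|^2\le C(1+|\eta_0|^2)\ka_0^2\le C\ka_0^K$ (as $K>2$) and the weighted convergence.
\item \emph{Charge interaction term.} The integrand $\rho/|x-\xi_0|$ satisfies $2/|x-\xi_0|\le\ka_0^2\le\ka_0^K$, so the same argument applies.
\item \emph{Self-interaction term.} This is the only genuinely nonlinear piece. Write the difference as $\frac12\iint(\rho_n-\rho)(y)(\rho_n+\rho)(x)|x-y|^{-1}$. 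By Hardy--Littlewood--Sobolev it is bounded by $C\|\rho_n-\rho\|_{6/5}\|\rho_n+\rho\|_{6/5}$.
\end{itemize}
To close the self-interaction estimate, apply kinetic interpolation to $|f_{0,n}-f_0|$, which is bounded by $2\|f_0\|_\infty$ and has finite second velocity moment. This gives $\|\rho_n-\rho\|_{5/3}\lesssim(\iint|v|^2|f_{0,n}-f_0|)^{3/5}\to0$. Interpolating with the $L^1$ convergence then yields $\|\rho_n-\rho\|_{6/5}\to0$, while $\|\rho_n+\rho\|_{6/5}$ stays bounded.

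The main (mild) obstacle is this last Coulomb self-interaction term, together with the justification of \eqref{eq:weighted-mollification}. Everything else is a direct domination by the weight $\ka_0^K$.
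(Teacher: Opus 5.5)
Your proposal is correct and follows the paper's proof essentially step by step. It uses the same triangle inequality through $g_{0,n}$, with \eqref{eq:cutoff-error} and \eqref{eq:weighted-mollification}, and the same domination of $|v|^2$ and $|x-\xi_0|^{-1}$ by $\ka_0^K$. The self-interaction term is handled in the same way, via kinetic interpolation on $|f_{0,n}-f_0|$, interpolation to $L^{6/5}$, and Hardy--Littlewood--Sobolev.
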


\begin{proof}
\emph{Weighted convergence.}
The two cutoffs differ from one only in the set appearing in
\eqref{eq:cutoff-error}; hence
\begin{equation}\label{eq:weighted-cutoff-convergence}
 \iint_{\R^6}\ka_0^K|g_{0,n}-f_0|\,\dd x\dd v
 \longrightarrow0.
\end{equation}
For fixed $n$, the support of $g_{0,n}$ is compact and has positive distance
from $\xi_0$.  The weight $\ka_0^K$ is therefore bounded on a neighborhood
of that support.  Ordinary $L^1$ convergence under mollification permits the
choice of $\varepsilon_n$ in \eqref{eq:weighted-mollification}.  Combining
that estimate with \eqref{eq:weighted-cutoff-convergence} proves
\eqref{eq:initial-approximation}.

\emph{Separation and phase-space bounds.}
Positivity, smoothness, and compact support follow from convolution with a
nonnegative compactly supported mollifier.  The explicit separation is
\eqref{eq:explicit-initial-separation}.  Since convolution preserves the
$L^1$ norm of a nonnegative function and is contractive in $L^\infty$,
\[
 \|f_{0,n}\|_1=\|g_{0,n}\|_1\leq\|f_0\|_1,
 \qquad
 \|f_{0,n}\|_\infty\leq\|g_{0,n}\|_\infty
 \leq\|f_0\|_\infty.
\]
Because $\ka_0\geq1$, \eqref{eq:initial-approximation} also gives
\eqref{eq:initial-L1-convergence} and
\eqref{eq:initial-moment-convergence}.

\emph{Convergence of the initial energy.}
Since $K>2$,
\[
 |v|^2+\frac1{|x-\xi_0|}\leq C_{K,\eta_0}\ka_0(x,v)^K.
\]
Thus the plasma kinetic energy and the plasma--charge interaction converge by
\eqref{eq:initial-approximation}; the charge kinetic energy is unchanged.
For the plasma self-interaction, let
\[
 h_n:=|f_{0,n}-f_0|,
 \qquad
 \sigma_n(x):=\int_{\R^3}h_n(x,v)\,\dd v.
\]
The sequence $h_n$ is uniformly bounded in $L^\infty$, and
\eqref{eq:initial-approximation} implies
$\iint|v|^2h_n\to0$.  Kinetic interpolation gives
\[
 \|\sigma_n\|_{5/3}
 \leq C\|h_n\|_\infty^{2/5}
 \left(\iint|v|^2h_n\right)^{3/5}\longrightarrow0.
\]
Together with $\|\sigma_n\|_1\to0$, this yields
$\|\rho_{0,n}-\rho_0\|_{6/5}\to0$.  Therefore, by the
Hardy--Littlewood--Sobolev inequality,
\begin{align*}
 \left|\iint_{\R^3\times\R^3}
 \frac{\rho_{0,n}(x)\rho_{0,n}(y)-\rho_0(x)\rho_0(y)}{|x-y|}
 \,\dd x\dd y\right|\leq
 C\|\rho_{0,n}-\rho_0\|_{6/5}
 \bigl(\|\rho_{0,n}\|_{6/5}+\|\rho_0\|_{6/5}\bigr)
 \longrightarrow0.
\end{align*}
Together with the convergence of the other three components, this completes the proof of the initial-energy convergence.
\end{proof}

By \cref{lem:approximate-data}, each $f_{0,n}$ is smooth, compactly
supported, and separated from the charge. The unit-mass normalization adopted in \cite{MMP2011} is inessential, and the same classical theory applies to arbitrary finite plasma mass. Hence yields a unique solution
$(f_n,\xi_n,\eta_n)$ of \eqref{eq:approximate-system}.  Its plasma
support remains separated from the charge on every finite interval.  Write
$(X_n(s;t,x,v),V_n(s;t,x,v))$ for its characteristic flow:
\[
 \frac{\dd}{\dd s}X_n=V_n,
 \qquad
 \frac{\dd}{\dd s}V_n=
 E_n(s,X_n)+\frac{X_n-\xi_n(s)}{|X_n-\xi_n(s)|^3},
\]
with $(X_n,V_n)(t;t,x,v)=(x,v)$.  The flow preserves Lebesgue measure,
$f_n$ is constant along characteristics, and
\begin{equation}\label{eq:approximate-energy-identity}
 \mathcal E_n(t)=\mathcal E_n(0),
 \qquad t\geq0.
\end{equation}

\begin{lemma}
\label{lem:uniform-approximate-bounds}
For every $T>0$,
\begin{equation}\label{eq:uniform-phase-bounds}
 \sup_n\sup_{0\leq t\leq T}
 \bigl(\|f_n(t)\|_{L^1(\R^6)}+\|f_n(t)\|_{L^\infty(\R^6)}\bigr)
 \leq C_M,
\end{equation}
\begin{align}
 \sup_n\biggl[{}&
 \int_0^T|E_n(t,\xi_n(t))|\,\dd t
 +\int_0^T\int_{\R^3}
 \frac{\rho_n(t,x)}{|x-\xi_n(t)|^2}\,\dd x\dd t
 \nonumber\\
 &+\int_0^T\int_{\R^3}\rho_n(t,x)^2\,\dd x\dd t\biggr]
 \leq C(1+T),                                      \label{eq:uniform-approximations}
\end{align}
\begin{equation}\label{eq:uniform-approximate-moment}
 \sup_n\sup_{0\leq t\leq T}H_{K,n}(t)\leq C_{T,K},
\end{equation}
and
\begin{equation}\label{eq:uniform-energy-approximation}
 \sup_n\sup_{0\leq t\leq T}
 \left(
 \iint_{\R^6}|v|^2f_n(t,x,v)\,\dd x\dd v+|\eta_n(t)|^2
 +\|E_n(t)\|_2^2
 \right)\leq C.
\end{equation}
All constants are independent of the separation radius in
\eqref{eq:explicit-initial-separation}.
\end{lemma}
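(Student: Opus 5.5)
The plan is to apply the a priori theory of \cref{sec:critical-charge,sec:all-moments} to each classical solution $(f_n,\xi_n,\eta_n)$ of \eqref{eq:approximate-system}. The only thing to check is that the constants there depend on nothing specific to $n$. They depend only on $C_M$, the initial energy, $K$, $T$ and $H_K(0)$, and never on the separation distance.

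\textbf{Step 1: phase-space bounds.} The bound \eqref{eq:uniform-phase-bounds} is immediate. The characteristic flow of \eqref{eq:approximate-system} preserves Lebesgue measure and $f_n$ is transported, so $\|f_n(t)\|_{1}=\|f_{0,n}\|_1$ and $\|f_n(t)\|_\infty=\|f_{0,n}\|_\infty$. By \eqref{eq:approx-Lp-bounds} both are bounded by $\|f_0\|_1$ and $\|f_0\|_\infty$, hence by $C_M$. So the constant $C_M$ attached to the $n$th problem can be replaced by the one defined from $f_0$, uniformly in $n$.

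\textbf{Step 2: uniform energy.} By \eqref{eq:initial-energy-convergence}, $\sup_n\mathcal E_n(0)\le \mathcal E(0)+1$ after discarding finitely many $n$. For those finitely many $n$ the initial energies are finite anyway, so we may simply take a maximum. The energy identity \eqref{eq:approximate-energy-identity} holds and all four energy terms are nonnegative. Hence the plasma kinetic energy, $|\eta_n|^2$, and the self-interaction $\frac12\iint\rho_n\rho_n/|x-y|$ are bounded by $\sup_n\mathcal E_n(0)$.

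Since $\|E_n\|_2^2=4\pi\iint\rho_n(x)\rho_n(y)|x-y|^{-1}\,\dd x\dd y$ (Newtonian potential), this gives \eqref{eq:uniform-energy-approximation}. Consequently the finite-energy layer \eqref{eq:finite-energy-layer} holds for every $n$ with a common constant $C$.

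\textbf{Step 3: critical estimates and moment.} All of \cref{lem:regular-source}, \cref{prop:source-to-density}, \cref{lem:radial-source}, \cref{prop:charge-closure} and \cref{thm:uniform-critical} use only $C_M$ and the finite-energy layer, through $\delta_0$ and $L$. Applied to $(f_n,\xi_n,\eta_n)$, they give \eqref{eq:uniform-approximations} with a constant independent of $n$.

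For the moment, \cref{thm:moving-smooth-estimate} gives $H_{K,n}(t)\le C_{T,K}$. Here $C_{T,K}$ comes from Gronwall applied to \eqref{eq:HK-differential}, with initial value $H_{K,n}(0)$. By \eqref{eq:initial-moment-convergence}, $\sup_nH_{K,n}(0)<\infty$, which yields \eqref{eq:uniform-approximate-moment}. Note that the $n$th solution's moment is defined with its own $\eta_n$, $\xi_n$, but $\xi_n(0)=\xi_0$ and $\eta_n(0)=\eta_0$, so $H_{K,n}(0)$ uses exactly the weight $\ka_0$.

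\textbf{Main obstacle.} The only delicate point is verifying that the classical solutions of \cite{MMP2011} are regular enough to justify the computations of \cref{sec:critical-charge,sec:all-moments}: differentiation along characteristics, the change of variables \eqref{eq:flow-change-variables}, and the tensor identity \eqref{eq:tensor-divergence}. This holds because the plasma support stays away from $\xi_n$ on $[0,T]$, so $F$ is smooth on the support. The quantitative bounds themselves never use the size of that separation, so the constants are uniform.
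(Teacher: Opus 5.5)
Your proposal is correct and follows the paper's proof: flow invariance gives the phase-space bounds, \cref{lem:approximate-data} makes the initial masses, energies and $K$-moments uniformly bounded, and the separation-independent estimates of \cref{thm:uniform-critical,thm:moving-smooth-estimate} give \eqref{eq:uniform-approximations} and \eqref{eq:uniform-approximate-moment}, while the energy identity and the nonnegativity of the four energy terms give \eqref{eq:uniform-energy-approximation}. Two details you add only make explicit what the paper states tersely: you establish the uniform energy bound first, since it feeds the constants of the a priori theory, and you check $\|E_n\|_2^2=4\pi\iint\rho_n(x)\rho_n(y)|x-y|^{-1}\,\dd x\dd y$.
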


\begin{proof}
The characteristic flow preserves every phase-space $L^p$ norm, which gives
\eqref{eq:uniform-phase-bounds}.  By \cref{lem:approximate-data}, the initial
masses, $L^\infty$ norms, total energies, and prescribed energy moments are
uniformly bounded.  The critical estimates of
\cref{thm:uniform-critical} give \eqref{eq:uniform-approximations}, while the
moment estimate of \cref{thm:moving-smooth-estimate} gives
\eqref{eq:uniform-approximate-moment}; both bounds are independent of the
initial separation.  The classical energy identity, the nonnegativity
of all four energy components, and the convergence of the initial energies
then provide the remaining uniform bound.
\end{proof}

Fix a finite time $T>0$ for the rest of the construction.  All
subsequences below may depend on $T$; a diagonal extraction is performed at
the end.  We begin with the charge trajectory and the concentration estimates
that are specific to the moving singularity.

\subsection{Compactness of the charge and uniform control of the plasma}
\label{subsec:compactness}

The uniform $L^1_t$ estimate for the field on the charge is not by itself
sufficient for compactness of $\eta_n$.  We therefore refine the high--low
label decomposition from \cref{prop:charge-closure} to obtain a quantitative
modulus on every subinterval.

\begin{proposition} 
\label{prop:charge-holder}
Let
\[
 \alpha_K=\frac{3(K-1)}{3K+1}.
\]
For every $T>0$,
\begin{equation}\label{eq:charge-line-modulus}
 \int_s^t|E_n(\tau,\xi_n(\tau))|\,\dd\tau
 \leq C_{T,K}|t-s|^{\alpha_K},
 \qquad 0\leq s\leq t\leq T,
\end{equation}
uniformly in $n$.  In particular,
\begin{equation}\label{eq:charge-holder}
 |\eta_n(t)-\eta_n(s)|
 \leq C_{T,K}|t-s|^{\alpha_K}.
\end{equation}
\end{proposition}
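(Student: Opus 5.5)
The plan is to rerun the low--high decomposition of \cref{prop:charge-closure}, now with the propagated moment \eqref{eq:uniform-approximate-moment} available and a free velocity threshold $R\geq1$ tuned to the interval length. Fix $I=[s,t]\subset[0,T]$ with $\delta=t-s$. The case $\delta\geq\delta_0$ follows at once from \eqref{eq:uniform-approximations}, so assume $\delta$ is small. Unlike before, I would not split by labels but at each time $\tau$ by the current weight. Let $\psi\in C^\infty(\R)$ satisfy $0\leq\psi\leq1$, $\psi=0$ on $(-\infty,R]$, $\psi=1$ on $[2R,\infty)$ and $|\psi'|\leq2/R$. Then write
\[
 E_n(\tau,\xi_n)=\iint\frac{\xi_n-x}{|\xi_n-x|^3}\bigl(1-\psi(\ka_n)\bigr)f_n\,\dd x\dd v
 +\iint\frac{\xi_n-x}{|\xi_n-x|^3}\psi(\ka_n)f_n\,\dd x\dd v .
\]
All bounds below are uniform in $n$, since they use only \cref{lem:uniform-approximate-bounds}.

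For the \emph{low part}, note that $\ka_n\leq2R$ forces $|v-\eta_n|\leq2R$. Hence the subdensity $\sigma=\int(1-\psi(\ka_n))f_n\,\dd v$ satisfies $0\leq\sigma\leq\rho_n$ and $\|\sigma\|_\infty\leq CR^3$. By \cref{lem:positive-coulomb-interpolation} and the energy bound on $\|\rho_n\|_{5/3}$, this part is bounded pointwise in time by $CR^{4/3}$. Its time integral over $I$ is therefore at most $C\delta R^{4/3}$.

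For the \emph{high part}, I would use a weighted version of the radial identity from \cref{lem:radial-source}. Along each characteristic, differentiate $p_z\,\psi(\ka_n)$. The computation in that lemma gives $\psi(\ka_n)|Y|^{-2}$ plus nonnegative terms plus $\psi(\ka_n)(E_n(X)-E_n(\xi_n))\cdot e_z$. The remaining term is $p_z\psi'(\ka_n)\dot\ka_n$, and by \eqref{eq:moving-weight-derivative} and $|p_z|\leq\ka_n\leq2R$ on $\supp\psi'$ it is bounded by $4(|E_n(X)|+|E_n(\xi_n)|)\1_{\{\ka_n>R\}}$. Integrating in time and over labels via \eqref{eq:flow-change-variables}, the high part is bounded by three contributions. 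First come the endpoint terms, at most $2\sup_\tau\iint_{\ka_n>R}\ka_nf_n\leq CH_{K,n}R^{1-K}$. Second is $C\int_I\int|E_n|\,\1_{\{\ka_n>R\}}f_n\,\dd v\dd x\dd\tau$. Since $\1_{\{\ka_n>R\}}\leq R^{1-K}\ka_n^{K-1}$, this is at most $R^{1-K}\int_I\int|E_n|h_{K-1}$. By \eqref{eq:field-h-pairing} and the bound $\int_I\|\rho_n\|_2^{2/3}\leq C\delta^{2/3}$ (H\"older plus \eqref{eq:uniform-approximations}), it is at most $C_{T,K}R^{1-K}$. Third is $C\int_I|E_n(\tau,\xi_n)|\iint_{\ka_n>R}f_n\,\dd\tau\leq C(1+T)H_{K,n}R^{-K}$, using the already proven global $L^1_t$ bound on the charge-line field. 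Here no first-hitting argument is needed: the coefficient $R^{-K}$ is small and the $L^1$ bound is already known.

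Altogether this gives $\int_I|E_n(\tau,\xi_n)|\,\dd\tau\leq C_{T,K}(\delta R^{4/3}+R^{1-K})$. Balancing the two terms by taking $R=\delta^{-3/(3K+1)}$ gives $\delta^{3(K-1)/(3K+1)}=\delta^{\alpha_K}$, which is \eqref{eq:charge-line-modulus}. Then \eqref{eq:charge-holder} follows from $\dot\eta_n=E_n(\tau,\xi_n)$. The step needing the most care is the weighted radial identity. One must check that differentiating the cutoff $\psi(\ka_n)$ costs only the self-consistent fields, which holds by the cancellation of \cref{lem:moving-cancellation}, and that the factor $|p_z|\psi'$ stays bounded. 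The rest is bookkeeping with \cref{lem:weighted-kinetic-interpolation,lem:field-K}.
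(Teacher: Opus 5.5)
Your proof is correct. It reaches the same exponent as the paper, but by a genuinely different decomposition.

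\textbf{The paper's route.} The paper stays Lagrangian throughout. It stops $I$ at the first time $q_n$ reaches one, so that $Q_*\leq1$, and labels characteristics at time $s$. The exceptional set $\mathcal H$ consists of labels whose relative velocity exceeds $P=\delta^{-3/(3K+1)}$ at some time of $I_*$. Its mass is bounded by Chebyshev, applied both to $\ka_n(s,z)$ and to $G_z$; the latter uses the energy-level bound \eqref{eq:short-averaged-field} coming from the conditional field estimate. The unweighted radial estimate \eqref{eq:radial-label} is then applied to $\mathcal H$, giving $Q_*\leq C(\delta P^{4/3}+\mu_H^{1-1/K}+\delta^{1/2}\mu_H^{1/2}+\mu_H)$.

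\textbf{Your route.} You cut at each time by the current weight and differentiate $p_z\psi(\ka_n)$. The key point, which you identify correctly, is the cancellation of \cref{lem:moving-cancellation}. Because of it, the derivative of the cutoff costs only $|E_n(X)|+|E_n(\xi_n)|$ on $\{\ka_n\geq R\}$, with $|p_z\psi'|\leq4$ there. You absorb that cost with two known facts. The first is the pairing \eqref{eq:field-h-pairing} from the moment-propagation section. The second is the already established global bound on $\int_0^T|E_n(\tau,\xi_n(\tau))|\,\dd\tau$. One detail worth stating explicitly is that $p_z\psi(\ka_n)$ is $C^1$ along characteristics. This holds because the approximate plasma support stays separated from $\xi_n$, exactly as in \cref{lem:radial-source}.

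\textbf{Comparison.}
\begin{itemize}
\item What your route gains: it is shorter. It needs no first-hitting device, no supremum over the interval, and no bookkeeping for labels that only become fast during $I$ (the $\delta P^{-2}$ contribution in \eqref{eq:short-high-mass}). Your plasma-line term even carries a spare factor $\delta^{2/3}$.
\item What your route costs: it relies on the full weighted machinery of \cref{lem:weighted-kinetic-interpolation,lem:field-K} and on $H_{K,n}$ at every time of $I$.
\item What the paper's route gains: its treatment of the plasma-line field uses only energy-level quantities, namely $\|\rho_n\|_{3/2}$ and $\int\|E_n\|_6^2$. It invokes the $K$-moment only through Chebyshev at time $s$ and through the two endpoint terms.
\end{itemize}
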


\begin{proof}
Let $I=[s,t]$ and $\delta=t-s$.  It is enough to prove the estimate for
$\delta$ below a fixed small number; the uniform $L^1(0,T)$ bound in
\eqref{eq:uniform-approximations} then handles all larger intervals after
changing the constant.

For $r\in[s,t]$, set
\[
 q_n(r)=\int_s^r|E_n(\tau,\xi_n(\tau))|\,\dd\tau.
\]
If $q_n$ reaches the value one on $I$, let $t_*$ be its first hitting time;
otherwise put $t_*=t$.  Denote $I_*=[s,t_*]$ and
$Q_*=q_n(t_*)$.  Then $Q_*\leq1$ and $|I_*|\leq\delta$.
We label the characteristics at time $s$ and write
\[
 G_z:=\int_{I_*}|E_n(\tau,X_{n,z}(\tau))|\,\dd\tau.
\]
Since $Q_*\leq1$, the conditional field estimate
\eqref{eq:conditional-field}, followed by Cauchy--Schwarz in time and the
flow change of variables, gives
\begin{align}
 \iint f_n(s,z)G_z^2\,\dd z
 &\leq |I_*|\int_{I_*}\int_{\R^3}\rho_n|E_n|^2\,\dd x\dd\tau
 \nonumber\\
 &\leq C\delta\int_{I_*}\|E_n(\tau)\|_6^2\,\dd\tau
 \leq C\delta.                                      \label{eq:short-averaged-field}
\end{align}
Here we used the uniform $L^{3/2}$ bound for $\rho_n$, which follows by
interpolation between its mass and finite-energy $L^{5/3}$ bound.

Choose
\begin{equation}\label{eq:holder-threshold}
 P=\delta^{-3/(3K+1)}
\end{equation}
and decrease the admissible size of $\delta$ so that $P\geq8$.  Define the
high-relative-velocity labels by
\[
 \mathcal H=
 \left\{z:\sup_{\tau\in I_*}
 |V_{n,z}(\tau)-\eta_n(\tau)|>P\right\}.
\]
Along every label, \eqref{eq:moving-weight-derivative} yields
\[
 \left|\ka_n(\tau,X_{n,z}(\tau),V_{n,z}(\tau))
 -\ka_n(s,z)\right|\leq G_z+Q_*.
\]
If $z\in\mathcal H$, then the moving weight is larger than $P$ at some time.
Since $Q_*\leq1$ and $P\geq8$, this is impossible when both
$\ka_n(s,z)<P/4$ and $G_z<P/4$.  Therefore
\[
 \mathcal H\subset
 \{\ka_n(s,z)\geq P/4\}\cup\{G_z\geq P/4\}.
\]
Chebyshev's inequality, the uniform $K$-moment, and
\eqref{eq:short-averaged-field} give
\begin{equation}\label{eq:short-high-mass}
 \mu_H:=\iint_{\mathcal H}f_n(s,z)\,\dd z
 \leq C\bigl(P^{-K}+\delta P^{-2}\bigr).
\end{equation}

We now estimate the field on the charge.  The subdensity transported by the
labels outside $\mathcal H$ is bounded above by $\rho_n$ and, because
$|V_{n,z}-\eta_n|\leq P$ there, by $CP^3\|f_0\|_\infty$ pointwise in space.
The positive Coulomb interpolation of
\cref{lem:positive-coulomb-interpolation} consequently gives
\begin{equation}\label{eq:holder-low-field}
 \int_{I_*}|E_{n,\mathrm{low}}(\tau,\xi_n(\tau))|\,\dd\tau
 \leq C\delta P^{4/3}.
\end{equation}
For the labels in $\mathcal H$, positivity and the radial estimate
\eqref{eq:radial-label} imply
\begin{align*}
 \int_{I_*}|E_{n,\mathrm{high}}(\tau,\xi_n(\tau))|\,\dd\tau\leq
 \iint_{\mathcal H}f_n(s,z)
 \left(
 |U_{n,z}(s)|+|U_{n,z}(t_*)|+G_z+Q_*
 \right)\dd z.
\end{align*}
At both endpoints, H\"older's inequality and the uniform $K$-moment give
\[
 \iint_{\mathcal H}f_n(s,z)|U_{n,z}(s)|\,\dd z
 +\iint_{\mathcal H}f_n(s,z)|U_{n,z}(t_*)|\,\dd z
 \leq C\mu_H^{1-1/K}.
\]
For the second endpoint we used the measure-preserving flow to express the
$K$-moment at time $t_*$.  The plasma-line term is bounded by
\[
 \iint_{\mathcal H}f_n(s,z)G_z\,\dd z
 \leq\mu_H^{1/2}
 \left(\iint f_n(s,z)G_z^2\,\dd z\right)^{1/2}
 \leq C\delta^{1/2}\mu_H^{1/2},
\]
and the charge-line term is at most $Q_*\mu_H\leq\mu_H$.  Combining the
low and high populations yields
\begin{equation}\label{eq:holder-closure}
 Q_*\leq C\left(
 \delta P^{4/3}+\mu_H^{1-1/K}
 +\delta^{1/2}\mu_H^{1/2}+\mu_H
 \right).
\end{equation}
For the choice \eqref{eq:holder-threshold} and $0<\delta\leq1$,
\[
 \delta P^{-2}\leq P^{-K},
 \qquad
 \mu_H\leq CP^{-K}.
\]
Hence
\[
 \delta P^{4/3}=\delta^{\alpha_K},
 \qquad
 \mu_H^{1-1/K}\leq C\delta^{\alpha_K},
\]
while
\[
 \delta^{1/2}\mu_H^{1/2}
 \leq C\delta^{(6K+1)/(2(3K+1))}
 \leq C\delta^{\alpha_K},
 \qquad
 \mu_H\leq C\delta^{3K/(3K+1)}
 \leq C\delta^{\alpha_K}.
\]
Thus \eqref{eq:holder-closure} gives $Q_*\leq C\delta^{\alpha_K}$.
Choose the small-interval length so that the right-hand side is strictly less
than one.  Then $t_*$ cannot be a first hitting time, and consequently
\eqref{eq:charge-line-modulus} holds on every sufficiently short interval.
As explained at the beginning of the proof, the global $L^1$ bound extends
this estimate to all subintervals of $[0,T]$.  Since
$\dot\eta_n=E_n(t,\xi_n(t))$, the charge velocity inherits the same
H\"older modulus.
\end{proof}

\begin{corollary}
\label{cor:charge-compactness}
After extraction,
\begin{equation}\label{eq:charge-convergence}
 \eta_n\longrightarrow\eta
 \quad\text{uniformly on }[0,T],
 \qquad
 \xi_n\longrightarrow\xi
 \quad\text{in }C^1([0,T]),
\end{equation}
where $\eta\in C^{0,\alpha_K}([0,T])$ and
$\xi(t)=\xi_0+\int_0^t\eta(s)\,\dd s$.
\end{corollary}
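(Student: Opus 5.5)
The plan is to obtain the corollary from the Arzel\`a--Ascoli theorem, using the uniform H\"older modulus of \cref{prop:charge-holder} together with uniform bounds on the charge velocities.

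\emph{Uniform bounds.} By \eqref{eq:uniform-energy-approximation}, $\sup_n\sup_{[0,T]}|\eta_n(t)|\le C$. By \eqref{eq:charge-holder}, the family $(\eta_n)$ is uniformly $\alpha_K$-H\"older on $[0,T]$ with constant $C_{T,K}$, hence equicontinuous. Since $\xi_n(t)=\xi_0+\int_0^t\eta_n$, the family $(\xi_n)$ is uniformly bounded by $|\xi_0|+CT$ and uniformly Lipschitz.

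\emph{Extraction.} Arzel\`a--Ascoli gives a subsequence with $\eta_n\to\eta$ uniformly on $[0,T]$. Passing to the limit in \eqref{eq:charge-holder} for each fixed pair $s,t$ shows that $\eta$ satisfies the same H\"older bound, so $\eta\in C^{0,\alpha_K}([0,T])$. Define $\xi(t):=\xi_0+\int_0^t\eta(s)\dd s$. Then
\[
\sup_{[0,T]}|\xi_n-\xi|\le T\sup_{[0,T]}|\eta_n-\eta|\to0
\]
and $\dot\xi_n=\eta_n\to\eta=\dot\xi$ uniformly, which is convergence in $C^1([0,T])$. Since $\dot\xi=\eta$ is continuous, $\xi\in C^1$, and moreover $\xi\in C^{1,\alpha_K}$.

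\emph{Diagonal extraction.} A diagonal argument over $T\in\mathbb N$ produces a single subsequence and limits defined on $[0,\infty)$. These are consistent across $T$ because each later subsequence is extracted from the earlier one.

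There is no real obstacle here: the substantive input, namely the uniform modulus independent of the separation radius, is already contained in \cref{prop:charge-holder}. The only point to check is that the bound on $|\eta_n|$ is uniform in $n$. This follows from conservation of the approximate energies together with the convergence $\mathcal E_n(0)\to\mathcal E(0)$ from \cref{lem:approximate-data}.
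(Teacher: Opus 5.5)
Your proposal is correct and follows essentially the same route as the paper: a uniform bound on $\eta_n$ from energy conservation, the common H\"older modulus from \cref{prop:charge-holder}, Arzel\`a--Ascoli for $\eta_n$, and integration of $\dot\xi_n=\eta_n$ to get $C^1$ convergence of $\xi_n$. The diagonal extraction over $T$ that you include is carried out in the paper only at the end of the construction rather than inside this corollary, which is stated for a fixed $[0,T]$; including it here is harmless.
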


\begin{proof}
The energy bound in \eqref{eq:uniform-energy-approximation} controls
$\eta_n$ uniformly, and $\dot\xi_n=\eta_n$ gives a uniform bound for
$\xi_n$ on $[0,T]$.  \Cref{prop:charge-holder} gives a common
H\"older modulus for $\eta_n$.  Arzel\`a--Ascoli therefore yields uniform
convergence of a subsequence $\eta_n\to\eta$.  Integrating
$\dot\xi_n=\eta_n$ gives the asserted $C^1$ convergence of $\xi_n$ and
preserves the same H\"older exponent for the limiting velocity.
\end{proof}

\begin{lemma}
\label{lem:tightness}
For every $T>0$,
\begin{equation}\label{eq:space-tightness}
 \lim_{R\to\infty}\sup_n\sup_{0\leq t\leq T}
 \int_{|x|>R}\rho_n(t,x)\,\dd x=0,
\end{equation}
and
\begin{equation}\label{eq:velocity-tightness}
 \lim_{R\to\infty}\sup_n\sup_{0\leq t\leq T}
 \iint_{|v|>R}f_n(t,x,v)\,\dd x\dd v=0.
\end{equation}
\end{lemma}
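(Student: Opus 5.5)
The velocity tightness \eqref{eq:velocity-tightness} follows at once from the uniform kinetic energy bound \eqref{eq:uniform-energy-approximation}. Chebyshev gives
\[
 \iint_{|v|>R}f_n\,\dd x\dd v\leq R^{-2}\iint|v|^2f_n\,\dd x\dd v\leq CR^{-2},
\]
uniformly in $n$ and $t\in[0,T]$.

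For the spatial tightness \eqref{eq:space-tightness} I would not use characteristics. Instead I will propagate a weak spatial moment through the local mass equation. Fix a smooth radial cutoff $\psi_R(x)=\psi(x/R)$ with $0\le\psi\le1$, $\psi=0$ on $|x|\le R/2$ and $\psi=1$ on $|x|\ge R$, so that $|\nabla\psi_R|\le C/R$. Since $\partial_t\rho_n+\nabla_x\cdot j_n=0$ classically, we get
\[
 \frac{\dd}{\dd t}\int\psi_R\rho_n\,\dd x=\int\nabla\psi_R\cdot j_n\,\dd x ,
\]
and the right-hand side is bounded by
\[
 \frac CR\iint|v|f_n\,\dd x\dd v\leq \frac CR\, C_M^{1/2}\Bigl(\iint|v|^2f_n\Bigr)^{1/2}\leq\frac CR
\]
by \eqref{eq:uniform-energy-approximation}. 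The singular field does not enter here, because the mass equation carries no force term. Integrating in time gives
\[
 \sup_{t\le T}\int_{|x|>R}\rho_n(t)\le\int\psi_R\rho_{0,n}+\frac{CT}{R}.
\]

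It remains to make the initial term uniformly small. The bound $\int\psi_R\rho_{0,n}\le\int_{|x|>R/2}\rho_0+\|f_{0,n}-f_0\|_{L^1}$ is enough, combined with \eqref{eq:initial-L1-convergence}. Given $\varepsilon>0$, choose $N$ with $\|f_{0,n}-f_0\|_1<\varepsilon$ for $n\ge N$. Then enlarge $R$ so that the finitely many $f_{0,n}$ with $n<N$, as well as $f_0$, each have mass less than $\varepsilon$ outside $|x|>R/2$, and also $CT/R<\varepsilon$. This gives \eqref{eq:space-tightness}.

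The only mildly delicate point is this uniform-in-$n$ smallness of the initial tail, which is where the $L^1$ convergence of the data (rather than a moment bound in $x$, which is not assumed) is essential. Everything else is a direct consequence of the uniform energy bound.
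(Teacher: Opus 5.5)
Your proof is correct. It differs from the paper's in both halves, although in the spatial half only in formulation.

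\textbf{Velocity tightness.} The paper uses the propagated $K$-moment together with the uniform bound on $\eta_n$, so that $|v|>R$ forces $|v-\eta_n(t)|>R/2$, and it obtains the rate $C_{T,K}R^{-K}$. Your Chebyshev bound with the kinetic energy from \eqref{eq:uniform-energy-approximation} is more elementary and needs only finite energy; it gives $CR^{-2}$. The lemma asserts only that the tail vanishes, and its later use in \cref{prop:density-field-compactness} only needs the tail to vanish uniformly in $n$. So your weaker rate suffices downstream. The faster decay genuinely needed for the energy argument, \eqref{eq:kinetic-tail-K}, is derived separately from the $K$-moment in any case.

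\textbf{Spatial tightness.} The paper argues in Lagrangian form. It labels characteristics at time zero and splits the labels according to whether $|x|>R/2$. It bounds the indicator of an escaping label by $\frac2R\int_0^t|V_n|\,\dd\tau$, then returns to Eulerian variables via the measure-preserving flow. Your cutoff-tested continuity equation is the Eulerian dual of this and gives the same bound: the initial tail beyond $R/2$ plus $O(T/R)$. Its advantage is that it never touches the characteristics or the singular force, since the mass equation has no force term. The paper's version instead fits the label-based arguments it uses throughout.

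One small point to add: justify differentiating $\int\psi_R\rho_n$ in time even though $\psi_R$ is not compactly supported. You can either use that $f_n(t)$ has compact phase-space support on $[0,T]$ for the classical approximations, or work with the compactly supported $1-\psi_R$ together with conservation of mass. Your handling of the initial tail, via \eqref{eq:initial-L1-convergence} plus the finitely many indices $n<N$, is the same as in the paper.
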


\begin{proof}
Let $(X_n(\tau;0,x,v),V_n(\tau;0,x,v))$ be the characteristic starting from
$(x,v)$ at time zero.  The flow is measure preserving and $f_n$ is constant
along it, so
\begin{equation}\label{eq:space-tail-characteristics}
 \int_{|x|>R}\rho_n(t,x)\,\dd x
 =\iint f_{0,n}(x,v)
 \1_{\{|X_n(t;0,x,v)|>R\}}\,\dd x\dd v.
\end{equation}
Split the labels according to whether $|x|>R/2$.  If $|x|\leq R/2$ but
$|X_n(t;0,x,v)|>R$, then
\[
 \frac R2<|X_n(t;0,x,v)-x|
 \leq\int_0^t|V_n(\tau;0,x,v)|\,\dd\tau.
\]
Therefore
\[
 \1_{\{|x|\leq R/2,\ |X_n(t)|>R\}}
 \leq\frac2R\int_0^t|V_n(\tau;0,x,v)|\,\dd\tau.
\]
Using this inequality in \eqref{eq:space-tail-characteristics}, followed by
Fubini's theorem and the flow change of variables, gives
\begin{align}
 \int_{|x|>R}\rho_n(t,x)\,\dd x
 \leq\iint_{|x|>R/2}f_{0,n}(x,v)\,\dd x\dd v
+\frac2R\int_0^t\iint|v|f_n(\tau,x,v)\,\dd x\dd v\dd\tau.
                                                        \label{eq:space-tail-bound}
\end{align}
The second term is bounded by $C(1+T)/R$, uniformly in $n$, by
Cauchy--Schwarz, conservation of mass, and the kinetic-energy bound.

We spell out the uniform decay of the first term.  Given $\varepsilon>0$,
choose $N$ such that
$\|f_{0,n}-f_0\|_1<\varepsilon/2$ for $n\geq N$.  Since $f_0\in L^1$, for
all sufficiently large $R$,
\[
 \iint_{|x|>R/2}f_0(x,v)\,\dd x\dd v<\frac\varepsilon2.
\]
It follows that the initial tail is smaller than $\varepsilon$ for every
$n\geq N$.  The finitely many functions
$f_{0,1},\ldots,f_{0,N-1}$ also have vanishing spatial tails, so increasing
$R$ once more makes the same estimate valid for them.  Thus
\[
 \lim_{R\to\infty}\sup_n
 \iint_{|x|>R/2}f_{0,n}(x,v)\,\dd x\dd v=0.
\]
Together with \eqref{eq:space-tail-bound}, this proves
\eqref{eq:space-tightness}.

For the velocity tail, let
$A_T=\sup_n\|\eta_n\|_{L^\infty(0,T)}<\infty$.  If $R>2A_T$ and $|v|>R$,
then $|v-\eta_n(t)|>R/2$.  Since the moving weight dominates
$|v-\eta_n(t)|$, the uniform moment bound gives
\[
 \iint_{|v|>R}f_n(t,x,v)\,\dd x\dd v
 \leq\left(\frac2R\right)^K H_{K,n}(t)
 \leq C_{T,K}R^{-K}.
\]
The right-hand side tends to zero uniformly in $n$, completing the phase-space tightness argument.
\end{proof}

The final compactness estimate controls the part of the inverse-square force
concentrated near the moving trajectory.  It is stronger than the global
bound in \eqref{eq:uniform-approximations}, because the right-hand side now
vanishes with the tube radius.

\begin{proposition} 
\label{prop:near-charge}
For every $T>0$, there exists $\varepsilon_T>0$ such that, for
$0<\varepsilon\leq\varepsilon_T$,
\begin{equation}\label{eq:near-charge}
 \int_0^T\int_{|x-\xi_n(t)|<\varepsilon}
 \frac{\rho_n(t,x)}{|x-\xi_n(t)|^2}\,\dd x\dd t
 \leq C_{T,K}\varepsilon^{1/2}
\end{equation}
uniformly in $n$.
\end{proposition}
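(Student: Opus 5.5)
A localized version of the radial identity of \cref{lem:radial-source} should work here. Write $Y_z=X_{n,z}-\xi_n$, $U_z=V_{n,z}-\eta_n$, $e_z=Y_z/|Y_z|$ and $p_z=U_z\cdot e_z$, with labels taken at time $0$. Fix a cutoff $\theta_\varepsilon(r)=\theta(r/\varepsilon)$, where $\theta\in C^\infty([0,\infty))$ is nonincreasing, $\theta=1$ on $[0,1]$ and $\theta=0$ on $[2,\infty)$. We then differentiate $\theta_\varepsilon(|Y_z|)p_z$ along each characteristic. The computation in \cref{lem:radial-source} gives
\[
 \frac{\dd}{\dd t}\bigl(\theta_\varepsilon(|Y_z|)p_z\bigr)
 \geq\frac{\theta_\varepsilon(|Y_z|)}{|Y_z|^2}
 -\theta_\varepsilon(|Y_z|)\bigl(|E_n(t,X_{n,z})|+|E_n(t,\xi_n)|\bigr)
 +\theta_\varepsilon'(|Y_z|)p_z^2 .
\]
The last term is nonpositive, and it is supported in the shell $\varepsilon\le|Y_z|\le2\varepsilon$, where it is bounded by $C\varepsilon^{-1}|U_z|^2$.

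Next we integrate over $[0,T]$ and against $f_{0,n}(z)\,\dd z$, and use the flow change of variables \eqref{eq:flow-change-variables}. The left-hand side then dominates the quantity in \eqref{eq:near-charge}. The error terms are of four kinds, each controlled by localized uses of the bound \eqref{eq:uniform-approximate-moment} on $H_{K,n}$:
\begin{itemize}
\item[(i)] The endpoint terms are $\int_{|x-\xi_n|<2\varepsilon}\int|u|f_n$ at $t=0,T$. On the ball, $\ka\ge|x-\xi_n|^{-1/2}\ge(2\varepsilon)^{-1/2}$, so this is bounded by $\varepsilon^{(K-1)/2}H_{K,n}\le C\varepsilon^{1/2}$.
\item[(ii)] The plasma-field term $\int_0^T\int_{|x-\xi_n|<2\varepsilon}\rho_n|E_n|$ is handled by H\"older with $\|E_n\|_6$ and $\rho_n$ restricted to the ball. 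The restricted density $\rho_n\1_B$ is small in $L^{6/5}$ via $\int_B\rho_n\le C\varepsilon^{K/2}H_{K,n}$ interpolated with $\|\rho_n\|_{5/3}$. The time integral uses \cref{thm:uniform-critical} and Cauchy--Schwarz in time.
\item[(iii)] The charge-field term $\int_0^T|E_n(t,\xi_n)|\int_{B}\rho_n\dd t$ is at most $C\varepsilon^{K/2}\int_0^T|E_n(t,\xi_n)|\dd t\le C_{T}\varepsilon$.
\item[(iv)] The shell term is $\varepsilon^{-1}\int_0^T\int_{\varepsilon\le|x-\xi_n|\le2\varepsilon}\int|u|^2f_n$.
\end{itemize}

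The main obstacle is the shell term (iv), because of the factor $\varepsilon^{-1}$. We bound $|u|^2\le\ka^2$. On the shell, $\ka^{K-2}\ge c\varepsilon^{-(K-2)/2}$, hence $\int_{\rm shell}\int|u|^2f_n\le C\varepsilon^{(K-2)/2}H_{K,n}$. The term is therefore bounded by $C_{T,K}\varepsilon^{K/2-2}$. This suffices only when $K\geq5$, so for $2<K<5$ a better treatment is needed.

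For general $K$, we first try a cutoff that grows more gently. Choose $\theta$ so that $|\theta_\varepsilon'|$ is spread over the dyadic range $[\varepsilon,\varepsilon^{1/2}]$, for instance $\theta_\varepsilon(r)=\log(\varepsilon^{1/2}/r)/\log(\varepsilon^{-1/2})$ there. This gives $|\theta'|p^2\lesssim|U|^2/(r\log\varepsilon^{-1})$.

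If that is not enough, we use the alternative weight $|Y_z|^{1/2}p_z$ in place of $\theta_\varepsilon p_z$, without a cutoff, restricting to $|Y_z|<\varepsilon_T$. Its derivative contains $|Y|^{-3/2}$ and the good term $\tfrac12|Y|^{-1/2}p^2\ge0$. The latter is now favorable, since the derivative of the weight $|Y|^{1/2}$ is positive, so the troublesome sign disappears. One then obtains
\[
 \int_0^T\!\!\int_{|x-\xi_n|<\varepsilon}\frac{\rho_n}{|x-\xi_n|^2}
 \le\varepsilon^{1/2}\int_0^T\!\!\int\frac{\rho_n\min(1,\cdot)}{|x-\xi_n|^{3/2}}
 \cdots
\]
More precisely, the plan is to prove a uniform bound on $\int_0^T\int\rho_n|x-\xi_n|^{-5/2}\1_{|x-\xi_n|<\varepsilon_T}$ by applying the derivative identity to $|Y|^{-1/2}p$, and then to conclude via $|x-\xi_n|^{-2}\le\varepsilon^{1/2}|x-\xi_n|^{-5/2}$ on the $\varepsilon$-ball. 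The endpoint terms in this identity need $\int|Y|^{-1/2}|U|f\le CH_{K}$, which is fine for $K>2$ by Young's inequality. The cutoff at $\varepsilon_T$ then produces only $O(1)$ shell errors. The factor $\varepsilon^{1/2}$ in \eqref{eq:near-charge} suggests that this weighted radial identity is the intended mechanism, and I would pursue it first.
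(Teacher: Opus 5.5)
There is a genuine gap for $2<K<5$. Your first route, with the cutoff $\theta_\varepsilon(|Y_z|)p_z$, is correct. As you observe, though, the shell term costs $\varepsilon^{K/2-2}$, so it closes only for $K\geq5$.

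Neither fallback repairs this.
\begin{itemize}
\item With the logarithmic cutoff, the shell term becomes $(\log\varepsilon^{-1})^{-1}\int_0^T\iint_{|x-\xi_n|\geq\varepsilon}|v-\eta_n|^2|x-\xi_n|^{-1}f_n$. The bound $|u|^2/r\leq C\ka^K r^{(K-4)/2}$ then gives only $\varepsilon^{(K-4)/2}/\log\varepsilon^{-1}$, which blows up for $K<4$.
\item The weighted identity contains a sign error. The favorable sign you describe belongs to $|Y|^{1/2}p$, and that weight only produces $|Y|^{-3/2}$. This is useless here, since $|Y|^{-2}\geq\varepsilon^{-1/2}|Y|^{-3/2}$ on the $\varepsilon$-ball.
\end{itemize}
For the weight you actually need,
\[
 \frac{\dd}{\dd t}\bigl(|Y|^{-1/2}p\bigr)
 =|Y|^{-5/2}+|Y|^{-3/2}\Bigl(|U|^2-\tfrac32p^2\Bigr)
 +|Y|^{-1/2}\bigl(E_n(t,X_{n,z})-E_n(t,\xi_n)\bigr)\cdot e ,
\]
and the middle term equals $-\tfrac12|Y|^{-3/2}|U|^2$ for radial motion. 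For an exact radial Coulomb encounter $\ddot r=r^{-2}$, the boundary terms vanish as $r\to\infty$. Hence $\int r^{-5/2}\dd t=\tfrac12\int r^{-3/2}\dot r^2\dd t$, and the identity yields nothing unless $\int_0^T\iint|v-\eta_n|^2|x-\xi_n|^{-3/2}f_n$ is bounded separately. Pointwise that weight is of size up to $\ka^5$, which is the same obstruction as the shell term. The factor $\varepsilon^{1/2}$ does not signal a $|Y|^{-5/2}$ estimate.

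The missing idea is to localize the population rather than the identity.
\begin{itemize}
\item Apply the unlocalized estimate \eqref{eq:radial-label} on all of $[0,T]$, but only to the set $\mathcal A_{n,\varepsilon}$ of labels whose trajectories ever enter the tube $|X_{n,z}-\xi_n|<\varepsilon$. Every label contributing to the left side of \eqref{eq:near-charge} lies in this set. Since no cutoff is differentiated, no shell term arises.
\item At entry, $\ka_n\geq(2/\varepsilon)^{1/2}$. On the other hand, \eqref{eq:moving-weight-derivative} and the $L^1_t$ bound on $E_n(t,\xi_n(t))$ give $\ka_n\leq\ka_0(z)+G_z+C(1+T)$, where $G_z=\int_0^T|E_n(t,X_{n,z}(t))|\dd t$.
\item You also have $\iint f_{0,n}G_z^2\leq CT(1+T)$, by Cauchy--Schwarz in time, $\|\rho_n\|_{3/2}\leq C$, and the $L^2_{t,x}$ density bound.
\item Chebyshev then gives $\mu_{n,\varepsilon}\leq C_{T,K}(\varepsilon^{K/2}+\varepsilon)$.
\item H\"older on the endpoint, plasma-line and charge-line terms gives $\mu_{n,\varepsilon}^{1-1/K}+\mu_{n,\varepsilon}^{1/2}+\mu_{n,\varepsilon}\leq C_{T,K}\varepsilon^{1/2}$ for every $K>2$.
\end{itemize}
The rate $\varepsilon^{1/2}$ comes from the $\mu^{1/2}$ plasma-line term.
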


\begin{proof}
We first estimate the plasma field accumulated along all transported labels.
Label the characteristics at time zero and put
\[
 G_z=\int_0^T|E_n(t,X_{n,z}(t))|\,\dd t.
\]
Cauchy--Schwarz in time and the flow change of variables yield
\begin{align}
 \iint f_{0,n}(z)G_z^2\,\dd z
 &\leq T\int_0^T\int_{\R^3}\rho_n(t,x)|E_n(t,x)|^2\,\dd x\dd t
 \nonumber\\
 &\leq CT\int_0^T\|E_n(t)\|_6^2\,\dd t
 \leq CT\int_0^T\|\rho_n(t)\|_2^2\,\dd t
 \leq C\,T(1+T).                                  \label{eq:full-averaged-field}
\end{align}
In the second line we used the uniform $L^{3/2}$ bound for $\rho_n$ and the
Hardy--Littlewood--Sobolev inequality.

Let $\mathcal A_{n,\varepsilon}$ be the set of initial labels whose
trajectories enter the tube
$|X_{n,z}(t)-\xi_n(t)|<\varepsilon$ at some time in $[0,T]$.  At an entrance
time, the microscopic-energy weight is at least $(2/\varepsilon)^{1/2}$.  On the
other hand, \eqref{eq:moving-weight-derivative} and
\eqref{eq:uniform-approximations} give
\[
 \ka_n(t,X_{n,z}(t),V_{n,z}(t))
 \leq\ka_0(z)+G_z+C(1+T).
\]
After decreasing $\varepsilon_T$, the fixed term $C(1+T)$ is smaller than one
quarter of $(2/\varepsilon)^{1/2}$.  Hence every entering label satisfies
\[
 \ka_0(z)\geq c\varepsilon^{-1/2}
 \quad\text{or}\quad
 G_z\geq c\varepsilon^{-1/2}.
\]
Chebyshev's inequality, the initial $K$-moment, and
\eqref{eq:full-averaged-field} therefore imply
\begin{equation}\label{eq:entering-labels}
 \mu_{n,\varepsilon}:=
 \iint_{\mathcal A_{n,\varepsilon}}f_{0,n}(z)\,\dd z
 \leq C_{T,K}\bigl(\varepsilon^{K/2}+\varepsilon\bigr)
 \leq C_{T,K}\varepsilon.
\end{equation}

If a trajectory lies in the tube at time $t$, its initial label belongs to
$\mathcal A_{n,\varepsilon}$.  Hence the left-hand side of
\eqref{eq:near-charge} is bounded by
\[
 \iint_{\mathcal A_{n,\varepsilon}}f_{0,n}(z)
 \int_0^T\frac{\dd t}{|X_{n,z}(t)-\xi_n(t)|^2}\,\dd z.
\]
Apply the radial estimate \eqref{eq:radial-label} to these labels.  At the two
endpoints, H\"older's inequality and the uniform $K$-moment give
\[
 \iint_{\mathcal A_{n,\varepsilon}}f_{0,n}|U_{n,z}(0)|\,\dd z
 +\iint_{\mathcal A_{n,\varepsilon}}f_{0,n}|U_{n,z}(T)|\,\dd z
 \leq C_{T,K}\mu_{n,\varepsilon}^{1-1/K}.
\]
The plasma-line field satisfies
\[
 \iint_{\mathcal A_{n,\varepsilon}}f_{0,n}G_z\,\dd z
 \leq C[T(1+T)]^{1/2}\mu_{n,\varepsilon}^{1/2}
\]
by \eqref{eq:full-averaged-field}, while the charge-line term is at most
\[
 \mu_{n,\varepsilon}
 \int_0^T|E_n(t,\xi_n(t))|\,\dd t
 \leq C(1+T)\mu_{n,\varepsilon}.
\]
Consequently,
\begin{align*}
 \int_0^T\int_{|x-\xi_n(t)|<\varepsilon}
 \frac{\rho_n(t,x)}{|x-\xi_n(t)|^2}\,\dd x\dd t
 &\leq C_{T,K}\left(
 \mu_{n,\varepsilon}^{1-1/K}
 +\mu_{n,\varepsilon}^{1/2}
 +\mu_{n,\varepsilon}\right)\\
 &\leq C_{T,K}\varepsilon^{1/2},
\end{align*}
where we used $K>2$ and \eqref{eq:entering-labels}.  The decay is uniform
in $n$ and rules out concentration of the inverse-square force in tubes
shrinking around the charge.
\end{proof}

\subsection{Strong compactness of the density and field}
\label{subsec:density-field-compactness}

Fix $T>0$.  By the uniform phase-space $L^\infty$ bound and
Banach--Alaoglu, after extraction,
\begin{equation}\label{eq:f-weak-star}
 f_n\rightharpoonup^* f
 \quad\text{in }L^\infty((0,T)\times\R^6),
\end{equation}
with $f\geq0$.  Local weak-* convergence and the uniform mass bound give
$f\in L^1((0,T)\times\R^6)$, while phase-space tightness prevents loss of mass
at infinity.  We next prove strong convergence of the spatial densities.

\begin{proposition}[Strong compactness of density and field]
\label{prop:density-field-compactness}
Let
\[
 \rho(t,x)=\int_{\R^3}f(t,x,v)\,\dd v,
 \qquad
 E(t,x)=\int_{\R^3}\frac{x-y}{|x-y|^3}\rho(t,y)\,\dd y.
\]
Then
\begin{equation}\label{eq:density-L1-convergence}
 \rho_n\longrightarrow\rho
 \quad\text{strongly in }L^1((0,T)\times\R^3),
\end{equation}
and, for every $1<p<2$,
\begin{equation}\label{eq:density-convergence}
 \rho_n\longrightarrow\rho
 \quad\text{strongly in }L^p((0,T)\times\R^3).
\end{equation}
Moreover,
\begin{equation}\label{eq:field-convergence}
 E_n\longrightarrow E
 \quad\text{strongly in }L^2((0,T)\times\R^3),
\end{equation}
and the regular force term converges in the weak formulation:
for every $\varphi\in C_c^\infty([0,T)\times\R^6)$,
\begin{equation}\label{eq:regular-force-convergence}
 \int_0^T\iint f_nE_n\cdot\nabla_v\varphi\,\dd x\dd v\dd t
 \longrightarrow
 \int_0^T\iint fE\cdot\nabla_v\varphi\,\dd x\dd v\dd t.
\end{equation}
\end{proposition}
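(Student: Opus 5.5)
The proof has three parts. First, strong compactness of truncated velocity averages via \cref{prop:velocity-averaging}. Second, removal of the truncations using the uniform tail bounds of \cref{lem:tightness}. Third, upgrading to $L^p$ and passing to the field and the force term.

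\emph{Velocity averaging.} For $\psi\in C_c^\infty(\R^3_v)$ and a spatial cutoff $\theta\in C_c^\infty(\R^3_x)$, the function $u_n=\theta(x)f_n$ satisfies
\[
 \partial_tu_n+v\cdot\nabla_xu_n=\nabla_v\cdot\bigl(-\theta(E_n+F_n)f_n\bigr)+(v\cdot\nabla\theta)f_n ,
 \qquad F_n=\frac{x-\xi_n}{|x-\xi_n|^3}.
\]
We fix $1<p<3/2$. Then $f_n$ is bounded in $L^\infty\cap L^1$. The product $E_nf_n$ is locally bounded in $L^p$, since $E_n$ is bounded in $L^2_tL^6_x$ by \eqref{eq:uniform-approximations}. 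The term $F_nf_n$ is locally bounded in $L^p$ because $|x-\xi_n(t)|^{-2}\in L^p_{\mathrm{loc}}$ uniformly in the center. By \cref{prop:velocity-averaging}, the averages $\int f_n\psi\,\dd v$ are compact in $L^p_{\mathrm{loc}}((0,T)\times\R^3)$, and hence in $L^1_{\mathrm{loc}}$. Their limit is $\int f\psi\,\dd v$ by \eqref{eq:f-weak-star}.

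\emph{Removing the truncations.} The difference $\rho_n-\int f_n\psi_R\,\dd v$, with $\psi_R=1$ on $|v|\le R$, is bounded in $L^1((0,T)\times\R^3)$ by the velocity tail \eqref{eq:velocity-tightness}. The part of $\rho_n$ with $|x|>R$ is controlled by \eqref{eq:space-tightness}. Both bounds are uniform in $n$, and lower semicontinuity gives the same bounds for $f$. An $\varepsilon/3$ argument then yields \eqref{eq:density-L1-convergence}. For $1<p<2$, we interpolate between the $L^1$ convergence and the uniform $L^2((0,T)\times\R^3)$ bound in \eqref{eq:uniform-approximations} (with Fatou for $\rho$); this gives \eqref{eq:density-convergence}.

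\emph{Field and force.} For the field, Hardy--Littlewood--Sobolev gives $\|E_n-E\|_{L^2_x}\le C\|\rho_n-\rho\|_{L^{6/5}_x}$. Since $1<6/5<2$, the convergence in \eqref{eq:density-convergence} with $p=6/5$ yields \eqref{eq:field-convergence} after integration in time. For \eqref{eq:regular-force-convergence}, we write
\[
 f_nE_n-fE=f_n(E_n-E)+(f_n-f)E .
\]
The first piece pairs the strongly convergent $E_n-E$ in $L^2_{t,x}$ with $f_n\nabla_v\varphi$. This factor is supported in a compact set and uniformly bounded, so its $v$-integral is bounded in $L^2_{t,x}$ and the pairing tends to zero. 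The second piece tends to zero because $E\nabla_v\varphi\in L^1((0,T)\times\R^6)$ has compact support and $f_n\rightharpoonup^*f$ in $L^\infty$.

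The main obstacle is the averaging step: one must check that every source lies in a common reflexive $L^p$ with $p>1$, uniformly in $n$ despite the moving singularity. The restriction $p<3/2$ handles the Coulomb term, and the $L^2_tL^6_x$ bound on $E_n$ handles the regular field.
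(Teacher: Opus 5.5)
Your overall route is the paper's: velocity averaging with all sources in a reflexive $L^p$, $1<p<3/2$; removal of the velocity and spatial truncations by \cref{lem:tightness}; interpolation with the uniform $L^2_{t,x}$ bound; and the same splitting $f_nE_n-fE=f_n(E_n-E)+(f_n-f)E$ for the force term. Localizing in $x$ with $\theta$ instead of in $v$ with $\chi_R$ makes no difference, since \cref{prop:velocity-averaging} already localizes.

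The one step that fails as written is the passage to \eqref{eq:field-convergence}. Hardy--Littlewood--Sobolev at fixed time gives $\|E_n(t)-E(t)\|_{L^2_x}\le C\|\rho_n(t)-\rho(t)\|_{L^{6/5}_x}$. Squaring and integrating in time therefore requires $\int_0^T\|\rho_n(t)-\rho(t)\|_{L^{6/5}_x}^2\,\dd t\to0$, which is convergence in $L^2_tL^{6/5}_x$. However, \eqref{eq:density-convergence} with $p=6/5$ only gives $\int_0^T\|\rho_n-\rho\|_{L^{6/5}_x}^{6/5}\,\dd t\to0$. Integrability of order $6/5$ in time does not control the $L^2$ norm in time. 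What integration in time actually gives you is $E_n\to E$ in $L^{6/5}(0,T;L^2_x)$, and this is exactly where the paper adds one more ingredient.

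The missing ingredient is a bound that is uniform in time. The paper takes $\sup_n\sup_t\|E_n(t)\|_2\le C$ from the energy bound \eqref{eq:uniform-energy-approximation}. It identifies the weak-$*$ limit of $E_n$ in $L^\infty_tL^2_x$ with $E$, and then interpolates:
\[
\|E_n-E\|_{L^2_tL^2_x}\le\|E_n-E\|_{L^{6/5}_tL^2_x}^{3/5}\,\|E_n-E\|_{L^\infty_tL^2_x}^{2/5}.
\]

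Equivalently, you can bound $\|\rho_n(t)\|_{6/5}$ uniformly in $n$ and $t$, by interpolating the conserved mass with the finite-energy $L^{5/3}$ bound of \eqref{eq:finite-energy-layer}. The same bound holds for $\rho(t)$ at a.e.\ $t$, using a.e.-in-time convergence along a subsequence and Fatou. This gives
\[
\int_0^T\|\rho_n-\rho\|_{6/5}^2\,\dd t\le C\int_0^T\|\rho_n-\rho\|_{6/5}^{6/5}\,\dd t\to0.
\]
With this repair the rest of your argument goes through, including the convergence of the force term, which uses the $L^2_{t,x}$ convergence of $E_n$.
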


\begin{proof}
Choose an exponent $p_0$ with $1<p_0<3/2$ and a velocity cutoff
$\chi_R\in C_c^\infty(\R^3)$ which equals one on $|v|\leq R$.  Set
$u_{n,R}=\chi_Rf_n$.  The approximate equation gives
\begin{equation}\label{eq:localized-transport}
 \partial_tu_{n,R}+v\cdot\nabla_xu_{n,R}
 =\nabla_v\cdot G_{n,R}+g_{n,R},
\end{equation}
where
\[
 G_{n,R}=-\chi_R(E_n+F_n)f_n,
 \qquad
 g_{n,R}=(E_n+F_n)f_n\cdot\nabla_v\chi_R,
 \qquad
 F_n(t,x)=\frac{x-\xi_n(t)}{|x-\xi_n(t)|^3}.
\]
On every compact cylinder, $E_n$ is bounded in $L^2$, while
$F_n$ is bounded in $L^{p_0}$ uniformly with respect to its moving center.
Together with the uniform $L^\infty$ bound for $f_n$, this shows that
$u_{n,R}$, $G_{n,R}$, and $g_{n,R}$ are locally bounded in $L^{p_0}$.
By \cref{prop:velocity-averaging}, the velocity averages
\[
 \int_{\R^3}\chi_R(v)f_n(t,x,v)\,\dd v
\]
are relatively compact in local $L^{p_0}$, and hence in local $L^1$.
Their strong limit is the corresponding velocity average of $f$, by
\eqref{eq:f-weak-star}.

We next remove the velocity cutoff.  For $R$ larger than twice the uniform
bound for $\eta_n$,
\[
 \int_0^T\int_\Omega\int_{|v|>R}f_n\,\dd v\dd x\dd t
 \leq C_{T,K}R^{-K}
\]
for every compact $\Omega\subset\R^3$, by \eqref{eq:velocity-tightness}.  The
same tail tends to zero for $f$ because $f\in L^1$.  It follows that
$\rho_n\to\rho$ strongly in $L^1((0,T)\times\Omega)$ for every compact $\Omega\subset\R^3$.

To pass from local to global convergence, choose $R_x$ large.  By
\eqref{eq:space-tightness},
\[
 \sup_n\int_0^T\int_{|x|>R_x}\rho_n(t,x)\,\dd x\dd t
 \longrightarrow0
 \qquad(R_x\to\infty).
\]
The same is true for $\rho$ because
$\rho\in L^1((0,T)\times\R^3)$.  Combining these tail bounds with the
local strong convergence proves \eqref{eq:density-L1-convergence}.
The uniform critical estimate gives
$\sup_n\|\rho_n\|_{L^2((0,T)\times\R^3)}<\infty$, and weak lower
semicontinuity gives the same bound for $\rho$.  Interpolation between
$L^1$ and $L^2$ now yields \eqref{eq:density-convergence} for every $p<2$.

In particular, $\rho_n\to\rho$ strongly in
$L^{6/5}((0,T)\times\R^3)$.  The Hardy--Littlewood--Sobolev inequality,
applied at each time, gives
\begin{equation}\label{eq:field-L65-time}
 \|E_n-E\|_{L^{6/5}(0,T;L^2_x)}
 \leq C\|\rho_n-\rho\|_{L^{6/5}((0,T)\times\R^3)}
 \longrightarrow0.
\end{equation}
On the other hand, \eqref{eq:uniform-energy-approximation} bounds $E_n$ in
$L^\infty(0,T;L^2_x)$.  After extraction, it converges weak-* in that space;
\eqref{eq:field-L65-time} identifies the weak-* limit with $E$.  Hence
$E_n-E$ is uniformly bounded in $L^\infty_tL^2_x$.  Interpolation in time
between $L^{6/5}_tL^2_x$ and $L^\infty_tL^2_x$ proves
\eqref{eq:field-convergence}.

Finally, write
\[
 E_nf_n-Ef=(E_n-E)f_n+E(f_n-f).
\]
On the compact velocity support of $\nabla_v\varphi$, the first term tends
to zero by \eqref{eq:field-convergence} and the uniform $L^\infty$ bound on
$f_n$.  In the second term, $E\cdot\nabla_v\varphi$ belongs to $L^1$ on its
compact support, so \eqref{eq:f-weak-star} identifies the weak-* limit.
Together these convergences identify the regular force term.
\end{proof}

The strong density convergence also identifies the mass of the limit.  If
$\psi\in C_c^\infty(0,T)$, then the classical mass identity gives
\[
 \int_0^T\psi(t)\int_{\R^3}\rho_n(t,x)\,\dd x\dd t
 =\left(\iint_{\R^6}f_{0,n}\,\dd x\dd v\right)
 \int_0^T\psi(t)\,\dd t.
\]
Passing to the limit by \eqref{eq:density-L1-convergence} and
\eqref{eq:initial-L1-convergence} shows that
$\int\rho(t)=\iint f_0$ for almost every $t$.  In particular,
$f\in L^\infty(0,T;L^1(\R^6))$, as required in
\cref{def:weak-solution}.

\subsection{Identification of the singular couplings}
\label{subsec:coupled-limit}

We now identify the field acting on the charge and the inverse-square term
in the Vlasov equation.

\begin{lemma}
\label{lem:newton-limit}
There exists $E(\cdot,\xi(\cdot))\in L^1(0,T)$ such that
\begin{equation}\label{eq:charge-force-convergence}
 E_n(\cdot,\xi_n(\cdot))
 \longrightarrow E(\cdot,\xi(\cdot))
 \quad\text{strongly in }L^1(0,T).
\end{equation}
For almost every $t$, this limit is the absolutely convergent Coulomb
integral
\begin{equation}\label{eq:limit-charge-force}
 E(t,\xi(t))=
 \int_{\R^3}\frac{\xi(t)-x}{|\xi(t)-x|^3}\rho(t,x)\,\dd x.
\end{equation}
Moreover,
\begin{equation}\label{eq:limit-charge-equations}
 \eta(t)=\eta_0+\int_0^tE(s,\xi(s))\,\dd s,
 \qquad
 \xi(t)=\xi_0+\int_0^t\eta(s)\,\dd s.
\end{equation}
\end{lemma}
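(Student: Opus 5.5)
The plan is to split the Coulomb kernel at a small radius $\varepsilon$ around the charge. The near part is controlled uniformly by \cref{prop:near-charge}; the far part converges by the strong density convergence and the uniform convergence of $\xi_n$. Fix $\varepsilon\in(0,\varepsilon_T]$ and a cutoff $\theta_\varepsilon\in C^\infty(\R^3)$ with $0\le\theta_\varepsilon\le1$, $\theta_\varepsilon=0$ on $|h|<\varepsilon/2$ and $\theta_\varepsilon=1$ on $|h|\ge\varepsilon$. Let $k(h)=h/|h|^3$ and write
\[
 E_n(t,\xi_n(t))=-\int k(x-\xi_n(t))\theta_\varepsilon(x-\xi_n(t))\rho_n(t,x)\,\dd x
 -\int k(x-\xi_n(t))(1-\theta_\varepsilon)(x-\xi_n(t))\rho_n(t,x)\,\dd x .
\]
Call the two pieces $A_n^\varepsilon(t)$ and $B_n^\varepsilon(t)$. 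Since $|k|=|h|^{-2}$ and $1-\theta_\varepsilon$ is supported in $|h|<\varepsilon$, \cref{prop:near-charge} gives $\|B_n^\varepsilon\|_{L^1(0,T)}\le C_{T,K}\varepsilon^{1/2}$ uniformly in $n$.

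For the limit, Fatou's lemma applied along a subsequence with $\rho_n\to\rho$ a.e. and $\xi_n\to\xi$ uniformly gives
\[
 \int_0^T\int\frac{\rho}{|x-\xi|^2}\,\dd x\dd t\le C(1+T).
\]
This yields \eqref{eq:weak-near-charge-integrability}, so the Coulomb integral \eqref{eq:limit-charge-force} converges absolutely for a.e.\ $t$ and defines an $L^1(0,T)$ function. The same Fatou argument restricted to the tube $|x-\xi_n|<\varepsilon$ (the indicator of the open tube is lower semicontinuous under the convergence) shows that the analogous near part $B^\varepsilon$ for the limit also satisfies $\|B^\varepsilon\|_{L^1(0,T)}\le C_{T,K}\varepsilon^{1/2}$.

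For the far parts, the kernel $k_\varepsilon:=k\theta_\varepsilon$ is bounded by $4\varepsilon^{-2}$ and Lipschitz. Write
\[
 A_n^\varepsilon-A^\varepsilon=-\int\bigl[k_\varepsilon(x-\xi_n)-k_\varepsilon(x-\xi)\bigr]\rho_n\,\dd x-\int k_\varepsilon(x-\xi)(\rho_n-\rho)\,\dd x .
\]
The first term is bounded by $C_\varepsilon\|\xi_n-\xi\|_\infty\|\rho_n(t)\|_1$, which tends to zero by \cref{cor:charge-compactness} and mass conservation. The second is bounded by $C\varepsilon^{-2}\|\rho_n-\rho\|_{L^1((0,T)\times\R^3)}\to0$ by \eqref{eq:density-L1-convergence}. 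Hence
\[
 \limsup_n\|E_n(\cdot,\xi_n)-E(\cdot,\xi)\|_{L^1(0,T)}\le C_{T,K}\varepsilon^{1/2},
\]
and letting $\varepsilon\to0$ proves \eqref{eq:charge-force-convergence}.

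Finally, the approximate Newton law is $\eta_n(t)=\eta_0+\int_0^tE_n(s,\xi_n(s))\,\dd s$. Pass to the limit using the uniform convergence of $\eta_n$ and the $L^1$ convergence just proved; the $\xi$ equation is \cref{cor:charge-compactness}. The main obstacle is justifying the uniform smallness of the near-charge piece, for both the approximations and the limit, with the moving center. \cref{prop:near-charge} handles the approximations, and the limit is handled by lower semicontinuity under a.e.\ convergence. Since the weak solution's $f$ is defined only a.e.\ in time, the identity \eqref{eq:limit-charge-force} is asserted only for a.e.\ $t$, which suffices.
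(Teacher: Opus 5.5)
Your proposal is correct and follows the paper's argument: you split the Coulomb kernel into a near part and a far part, control the near part by \cref{prop:near-charge}, and pass the truncated far part to the limit using strong $L^1$ convergence of $\rho_n$ and uniform convergence of $\xi_n$. The differences are only bookkeeping: you transfer the tube bound to the limit by Fatou's lemma along an a.e.-convergent subsequence, using lower semicontinuity of the open-tube indicator, where the paper uses the bounded truncations $\min\{N,|x-\xi_n(t)|^{-2}\}$ and monotone convergence; and you define $E(\cdot,\xi(\cdot))$ directly as the absolutely convergent Coulomb integral rather than as the $L^1$ limit of the Cauchy family $g^\varepsilon$, which makes the paper's separate identification step unnecessary.
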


\begin{proof}
Let $K_\varepsilon$ be a smooth globally bounded and globally Lipschitz
truncation of $K(x)=x/|x|^3$, vanishing for $|x|\leq\varepsilon$ and equal
to $K$ for $|x|\geq2\varepsilon$, with
\[
 |K(x)-K_\varepsilon(x)|
 \leq C|x|^{-2}\1_{\{|x|<2\varepsilon\}}.
\]
Set
\[
 g_n^\varepsilon(t)=
 \int_{\R^3}K_\varepsilon(\xi_n(t)-x)\rho_n(t,x)\,\dd x,
 \qquad
 g^\varepsilon(t)=
 \int_{\R^3}K_\varepsilon(\xi(t)-x)\rho(t,x)\,\dd x.
\]
For fixed $\varepsilon$,
\begin{align*}
 |g_n^\varepsilon(t)-g^\varepsilon(t)|
 \leq\|K_\varepsilon\|_\infty
 \|\rho_n(t)-\rho(t)\|_1+\operatorname{Lip}(K_\varepsilon)
 |\xi_n(t)-\xi(t)|\|\rho(t)\|_1.
\end{align*}
After integration in time,
\eqref{eq:density-L1-convergence}, \eqref{eq:charge-convergence}, and the
finite mass of $\rho$ imply
\begin{equation}\label{eq:truncated-charge-force-convergence}
 g_n^\varepsilon\longrightarrow g^\varepsilon
 \quad\text{strongly in }L^1(0,T).
\end{equation}
The near-charge estimate gives
\begin{equation}\label{eq:charge-truncation-error}
 \sup_n\|E_n(\cdot,\xi_n(\cdot))-g_n^\varepsilon\|_{L^1(0,T)}
 \leq C_{T,K}\varepsilon^{1/2}.
\end{equation}
The triangle inequality at two truncation scales, followed by
$n\to\infty$, shows that $g^\varepsilon$ is a Cauchy family in $L^1(0,T)$.
Denote its limit by $E(\cdot,\xi(\cdot))$.  Equations
\eqref{eq:truncated-charge-force-convergence} and
\eqref{eq:charge-truncation-error} prove
\eqref{eq:charge-force-convergence}.

We next pass the shrinking-tube estimate to the limit.  Choose
$\chi\in C_c^\infty([0,\infty))$ with $0\leq\chi\leq1$, $\chi=1$ on
$[0,1]$, and $\chi=0$ on $[2,\infty)$.  For $N>0$, define
\[
 a_{n,\varepsilon,N}(t,x)=
 \chi\!\left(\frac{|x-\xi_n(t)|}{\varepsilon}\right)
 \min\!\left\{N,\frac1{|x-\xi_n(t)|^2}\right\},
\]
where the value at $x=\xi_n(t)$ is set equal to $N$, and define
$a_{\varepsilon,N}$ by replacing $\xi_n$ with $\xi$.  For fixed
$\varepsilon,N$, the coefficients are uniformly bounded, have a common
compact spatial support, and converge uniformly.  Therefore
\eqref{eq:density-L1-convergence} gives
\[
 \int_0^T\int a_{n,\varepsilon,N}\rho_n\,\dd x\dd t
 \longrightarrow
 \int_0^T\int a_{\varepsilon,N}\rho\,\dd x\dd t.
\]
The left-hand side is bounded by $C_{T,K}(2\varepsilon)^{1/2}$.
Monotone convergence as $N\to\infty$ yields
\begin{equation}\label{eq:limit-near-charge}
 \int_0^T\int_{|x-\xi(t)|<\varepsilon}
 \frac{\rho(t,x)}{|x-\xi(t)|^2}\,\dd x\dd t
 \leq C_{T,K}\varepsilon^{1/2}.
\end{equation}
The contribution of the region $|x-\xi(t)|\geq\varepsilon$ is bounded by
$\varepsilon^{-2}T\|f_0\|_{L^1(\R^6)}$.  Together with
\eqref{eq:limit-near-charge}, this gives
\[
 \int_0^T\int_{\mathbb R^3}
 \frac{\rho(t,x)}{|x-\xi(t)|^2}\,\dd x\dd t<\infty.
\]
In particular, for almost every \(t\), the Coulomb integral
\[
 \int_{\mathbb R^3}
 \frac{\xi(t)-x}{|\xi(t)-x|^3}\rho(t,x)\,dx
\]
is absolutely convergent. Moreover, by the definition of \(K_\varepsilon\)
and \eqref{eq:limit-near-charge},
\[
\begin{aligned}
&\int_0^T
 \left|
 g^\varepsilon(t)
 -
 \int_{\mathbb R^3}
 \frac{\xi(t)-x}{|\xi(t)-x|^3}\rho(t,x)\,dx
 \right|dt \\
&\qquad\le
 C\int_0^T\int_{|x-\xi(t)|<2\varepsilon}
 \frac{\rho(t,x)}{|x-\xi(t)|^2}\,dx\,dt
 \le C_{T,K}\varepsilon^{1/2}.
\end{aligned}
\]
Therefore the \(L^1(0,T)\)-limit of \(g^\varepsilon\) is precisely the
absolutely convergent Coulomb integral in \eqref{eq:limit-charge-force}.   

Finally, the approximate charge equations are
\[
 \eta_n(t)=\eta_0+\int_0^tE_n(s,\xi_n(s))\,\dd s,
 \qquad
 \xi_n(t)=\xi_0+\int_0^t\eta_n(s)\,\dd s.
\]
Passing to the limit by \eqref{eq:charge-force-convergence} and
\eqref{eq:charge-convergence} gives the two limiting integral equations.
\end{proof}

\begin{lemma}
\label{lem:singular-force-limit}
For every $\varphi\in C_c^\infty([0,T)\times\R^6)$,
\begin{align}\label{eq:singular-force-convergence}
 &\int_0^T\iint f_n(t,x,v)
 \frac{x-\xi_n(t)}{|x-\xi_n(t)|^3}
 \cdot\nabla_v\varphi(t,x,v)\,\dd x\dd v\dd t
 \nonumber\\
 &\qquad\longrightarrow
 \int_0^T\iint f(t,x,v)
 \frac{x-\xi(t)}{|x-\xi(t)|^3}
 \cdot\nabla_v\varphi(t,x,v)\,\dd x\dd v\dd t.
\end{align}
Consequently, $(f,\xi,\eta)$ satisfies the Vlasov weak formulation in
\cref{def:weak-solution}.
\end{lemma}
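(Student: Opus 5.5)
The plan is to split the singular kernel with the same truncation $K_\varepsilon$ used in \cref{lem:newton-limit}, now with the field variable in the Vlasov equation. Write $F(y)=y/|y|^3$ and let $F_\varepsilon$ be smooth, bounded and globally Lipschitz, equal to $F$ for $|y|\geq2\varepsilon$ and vanishing for $|y|\leq\varepsilon$, with
\[
 |F-F_\varepsilon|(y)\leq C|y|^{-2}\1_{\{|y|<2\varepsilon\}}.
\]
Fix $\varphi$, supported in $[0,T)\times\{|x|+|v|\leq R_\varphi\}$. Decompose the left-hand side of \eqref{eq:singular-force-convergence} into the truncated term
\[
 \int_0^T\iint f_n F_\varepsilon(x-\xi_n(t))\cdot\nabla_v\varphi
\]
and a remainder carrying $F-F_\varepsilon$. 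Do the same for the limit.

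\textbf{Remainder.} Since $|\nabla_v\varphi|\leq C_\varphi$, the remainder for $f_n$ is bounded by
\[
 C_\varphi\int_0^T\int_{|x-\xi_n(t)|<2\varepsilon}\frac{\rho_n}{|x-\xi_n(t)|^2}\,\dd x\dd t\leq C_{T,K}\varepsilon^{1/2}
\]
by \cref{prop:near-charge}, uniformly in $n$. The limit remainder is bounded the same way by \eqref{eq:limit-near-charge}, which was already established in the proof of \cref{lem:newton-limit}. That step also shows the limit integral is absolutely convergent, using \eqref{eq:weak-near-charge-integrability}.

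\textbf{Truncated term, fixed $\varepsilon$.} Write
\[
 f_nF_\varepsilon(x-\xi_n)-fF_\varepsilon(x-\xi)=f_n\bigl[F_\varepsilon(x-\xi_n)-F_\varepsilon(x-\xi)\bigr]+(f_n-f)F_\varepsilon(x-\xi).
\]
The first piece is bounded by $\operatorname{Lip}(F_\varepsilon)\,\|\xi_n-\xi\|_{L^\infty(0,T)}\,C_\varphi\,T\,C_M$. This tends to zero by \cref{cor:charge-compactness}. For the second piece, $F_\varepsilon(x-\xi(t))\cdot\nabla_v\varphi$ is a bounded, compactly supported function, hence lies in $L^1((0,T)\times\R^6)$. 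The weak-* convergence \eqref{eq:f-weak-star} therefore gives convergence. Letting $n\to\infty$ and then $\varepsilon\to0$ proves \eqref{eq:singular-force-convergence}.

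\textbf{Weak formulation.} Pass to the limit in the weak formulation satisfied by each classical $f_n$. The transport terms $f_n(\partial_t\varphi+v\cdot\nabla_x\varphi)$ converge by weak-* convergence against an $L^1$ test function. The regular force converges by \eqref{eq:regular-force-convergence}, and the singular force by the argument above. The initial term converges by \eqref{eq:initial-L1-convergence}. The remaining requirements of \cref{def:weak-solution} come from earlier results:
\begin{itemize}
\item the integrability \eqref{eq:weak-near-charge-integrability} and Newton's law \eqref{eq:weak-newton} from \cref{lem:newton-limit};
\item the regularity of $\xi$ and $\eta$ from \cref{cor:charge-compactness};
\item $f\in L^\infty(L^1\cap L^\infty)$ from the mass identification.
\end{itemize}
I expect no real obstacle. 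The only delicate point, concentration near the moving singularity, is already handled uniformly by \cref{prop:near-charge}.
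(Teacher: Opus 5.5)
Your proposal is correct and essentially matches the paper's proof. The paper splits the kernel with a radial cutoff $\chi_\varepsilon(|x-\xi_n(t)|)$, which is your $F_\varepsilon$; it identifies the truncated part through uniform convergence of the bounded coefficient and the weak-* convergence \eqref{eq:f-weak-star}, and bounds the tube remainders for $f_n$ and $f$ by \eqref{eq:near-charge} and \eqref{eq:limit-near-charge} before sending $n\to\infty$ and then $\varepsilon\downarrow0$. Your Lipschitz bound in $\|\xi_n-\xi\|_{L^\infty(0,T)}$ just makes that uniform convergence explicit, and the other terms of the weak formulation are passed to the limit exactly as in the paper.
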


\begin{proof}
Choose a smooth radial cutoff $\chi_\varepsilon$ which is zero on
$[0,\varepsilon]$ and one on $[2\varepsilon,\infty)$.  Decompose the
left-hand side of \eqref{eq:singular-force-convergence} into the part with
$\chi_\varepsilon(|x-\xi_n(t)|)$ and its complement.

On the support of the cutoff, the coefficient
\[
 \chi_\varepsilon(|x-\xi_n(t)|)
 \frac{x-\xi_n(t)}{|x-\xi_n(t)|^3}\cdot\nabla_v\varphi
\]
is bounded and converges uniformly to the corresponding coefficient with
$\xi$ on the compact support of $\varphi$.  The weak-* convergence
\eqref{eq:f-weak-star} therefore identifies its limit.  The omitted part is
bounded by
\begin{align*}
 C_\varphi\int_0^T\int_{|x-\xi_n(t)|<2\varepsilon}
 \frac{\rho_n(t,x)}{|x-\xi_n(t)|^2}\,\dd x\dd t
 \leq C_{\varphi,T,K}\varepsilon^{1/2}
\end{align*}
by \eqref{eq:near-charge}.  The corresponding term for $f$ obeys the same
bound by \eqref{eq:limit-near-charge}.  Letting first $n\to\infty$ and then
$\varepsilon\downarrow0$ proves
\eqref{eq:singular-force-convergence}.

The transport and initial terms pass to the limit by
\eqref{eq:f-weak-star} and
$f_{0,n}\to f_0$ in $L^1$, while the regular field term is handled by
\eqref{eq:regular-force-convergence}.  Thus the complete Vlasov weak
formulation holds, including its initial term.
\end{proof}

\subsection{Passage to the limit}
\label{subsec:completion}

Before passing the higher moment, we record that the finite-energy estimates
survive the approximation.  The charge-line term passes by the strong
convergence \eqref{eq:charge-force-convergence}, and the density term follows
from weak lower semicontinuity and the uniform $L^2_{t,x}$ bound in
\eqref{eq:uniform-approximations}.  For the inverse-square term, apply
\eqref{eq:density-L1-convergence} and \eqref{eq:charge-convergence} to the
bounded Lipschitz kernels
\[
 (t,x)\longmapsto
 \min\!\left\{N,\frac1{|x-\xi_n(t)|^2}\right\},
\]
and then let $N\to\infty$ by monotone convergence.  The corresponding
uniform estimate in \eqref{eq:uniform-approximations} gives
\begin{align}\label{eq:limit-critical-package}
 &\int_0^T|E(t,\xi(t))|\,\dd t
 +\int_0^T\int_{\R^3}
 \frac{\rho(t,x)}{|x-\xi(t)|^2}\,\dd x\dd t
 +\int_0^T\int_{\R^3}\rho(t,x)^2\,\dd x\dd t
 \leq C(1+T).
\end{align}
The limiting solution therefore satisfies the critical estimates.  We now
pass the energy moment and the energy identity.

\begin{lemma}
\label{lem:moment-limit}
For every $T>0$,
\begin{equation}\label{eq:moment-limit}
 \esssup_{0\leq t\leq T}
 \iint_{\R^6}
 \left(1+|v-\eta(t)|^2+\frac{2}{|x-\xi(t)|}\right)^{K/2}
 f(t,x,v)\,\dd x\dd v
 \leq C_{T,K}.
\end{equation}
\end{lemma}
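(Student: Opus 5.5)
The plan is to pass the uniform moment bound \eqref{eq:uniform-approximate-moment} to the limit by weak lower semicontinuity. The weight is singular and depends on $n$ through $(\xi_n,\eta_n)$, so I will first truncate it and then remove the truncation by monotone convergence. Set
\[
 \ka_n(t,x,v)=\Bigl(1+|v-\eta_n(t)|^2+\tfrac{2}{|x-\xi_n(t)|}\Bigr)^{1/2}.
\]
For $N>0$ and $R>0$, introduce the bounded continuous truncated weight
\[
 w_{n,N,R}(t,x,v):=\chi_R(x,v)\min\{N,\ka_n(t,x,v)^K\},
\]
where $0\le\chi_R\le1$ is a continuous compactly supported cutoff equal to one on $\{|x|+|v|\le R\}$. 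Define $w_{N,R}$ in the same way, using $(\xi,\eta)$ in place of $(\xi_n,\eta_n)$.

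By \cref{cor:charge-compactness}, $(\xi_n,\eta_n)\to(\xi,\eta)$ uniformly on $[0,T]$. The map $(y,u)\mapsto\min\{N,(1+|u|^2+2/|y|)^{K/2}\}$ is uniformly continuous on compact sets; at $y=0$ it is simply capped at $N$, so it extends continuously there. Hence $w_{n,N,R}\to w_{N,R}$ uniformly on $[0,T]\times\R^6$, with a common compact support and a common bound $N$.

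Fix a nonnegative $\psi\in L^1(0,T)$. From the weak-* convergence \eqref{eq:f-weak-star}, I want to deduce
\[
 \int_0^T\psi(t)\iint w_{N,R}f\,\dd x\dd v\dd t
 =\lim_n\int_0^T\psi(t)\iint w_{n,N,R}f_n\,\dd x\dd v\dd t .
\]
I split the difference into $(w_{n,N,R}-w_{N,R})f_n$ and $w_{N,R}(f_n-f)$. The first term is bounded by $\|w_{n,N,R}-w_{N,R}\|_\infty\,\|\psi\|_1 C_M\to0$, using the mass bound \eqref{eq:uniform-phase-bounds}. The second term tends to zero because $\psi w_{N,R}\in L^1((0,T)\times\R^6)$ is an admissible test function for weak-* convergence in $L^\infty$.

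Since $w_{n,N,R}\le\ka_n^K$, the right-hand side is at most $\|\psi\|_1\sup_n\sup_tH_{K,n}(t)\le C_{T,K}\|\psi\|_1$. As $\psi\ge0$ is arbitrary in $L^1(0,T)$, this gives, for almost every $t$,
\[
 \iint w_{N,R}(t,x,v)f(t,x,v)\,\dd x\dd v\le C_{T,K}.
\]
The exceptional null set depends on $(N,R)$. I will take $N,R$ along integer sequences, so that a single null set works for all of them.

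Letting $N,R\to\infty$ and using monotone convergence gives
\[
 \iint\ka^Kf(t)\,\dd x\dd v\le C_{T,K}
\]
outside this countable union of null sets, which is \eqref{eq:moment-limit}.

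The main obstacle is the weight itself. It is singular at the moving point $\xi(t)$, and both its center and its velocity shift vary with $n$. This is handled by the cap at $N$, which makes the weight continuous, together with the uniform convergence of the charge trajectories from \cref{cor:charge-compactness}. No concentration estimate such as \cref{prop:near-charge} is needed, because lower semicontinuity only requires an upper bound on the truncated weights.
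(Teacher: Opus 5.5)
Your proposal is correct and follows the paper's proof essentially step by step. You cap and localize the weight, pass to the limit using the uniform convergence of $(\xi_n,\eta_n)$ from \cref{cor:charge-compactness} and the weak-* convergence against a nonnegative time weight, compare with $\ka_n^K$, and then remove the truncations.

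The differences are cosmetic. You omit the paper's $\varepsilon$-regularization of $2/|x-\xi|$, which is legitimate because the capped weight is identically $N$ in a neighborhood of the charge. You also need a small fix: since your $\chi_R$ was not chosen nondecreasing in $R$, remove the truncations with Fatou's lemma rather than monotone convergence.
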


\begin{proof}
Choose nondecreasing cutoffs $\chi_R^x,\chi_R^v$ with values in $[0,1]$,
equal to one on the balls of radius $R$ and supported in the balls of radius
$2R$.  For $R,N>0$ and $\varepsilon>0$, define
\begin{align*}
 w_{n,R,N,\varepsilon}(t,x,v)
 ={}\chi_R^x(x)\chi_R^v(v)\min\!\left\{N,
 \left(1+|v-\eta_n(t)|^2
 +\frac{2}{|x-\xi_n(t)|+\varepsilon}\right)^{K/2}
 \right\},
\end{align*}
and let $w_{R,N,\varepsilon}$ be the analogous weight with $\xi,\eta$.
For fixed $R,N,\varepsilon$, the convergence in
\eqref{eq:charge-convergence} implies
\[
 w_{n,R,N,\varepsilon}\longrightarrow w_{R,N,\varepsilon}
 \quad\text{uniformly on }[0,T]\times\R^6.
\]
Thus, for every nonnegative $\psi\in C_c^\infty(0,T)$,
\begin{align*}
 &\int_0^T\psi(t)\iint w_{R,N,\varepsilon}(t,x,v)
 f(t,x,v)\,\dd x\dd v\dd t\\
 &\qquad=\lim_{n\to\infty}
 \int_0^T\psi(t)\iint w_{n,R,N,\varepsilon}(t,x,v)
 f_n(t,x,v)\,\dd x\dd v\dd t\\
 &\qquad\leq C_{T,K}\int_0^T\psi(t)\,\dd t,
\end{align*}
because $w_{n,R,N,\varepsilon}\leq\ka_n^K$.
Let first $R\to\infty$, then $\varepsilon\downarrow0$, and finally
$N\to\infty$.  Monotone convergence gives
\[
 \int_0^T\psi(t)H_K(t)\,\dd t
 \leq C_{T,K}\int_0^T\psi(t)\,\dd t.
\]
Since the nonnegative time test function is arbitrary, the limiting energy
moment is essentially bounded on $[0,T]$ by the same uniform constant.
\end{proof}

\begin{proposition} 
\label{prop:energy-conservation}
The limiting solution satisfies
\begin{equation}\label{eq:energy-conservation}
 \mathcal E(t)=\mathcal E(0)
 \qquad\text{for almost every }t\in(0,T).
\end{equation}
\end{proposition}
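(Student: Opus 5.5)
We will pass the classical energy identity \eqref{eq:approximate-energy-identity} to the limit term by term, tested against time. Fix a nonnegative $\psi\in C_c^\infty(0,T)$. Then
\[
 \int_0^T\psi(t)\,\mathcal E_n(t)\,\dd t=\mathcal E_n(0)\int_0^T\psi\,\dd t .
\]
By \eqref{eq:initial-energy-convergence}, the right-hand side converges to $\mathcal E(0)\int\psi$. It therefore suffices to show that each of the four components of $\mathcal E_n$, integrated against $\psi$, converges to the corresponding component of $\mathcal E$. Since $\psi$ is arbitrary, this gives $\mathcal E(t)=\mathcal E(0)$ for almost every $t$.

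\emph{The charge and self-interaction terms.} The charge kinetic energy converges because $\eta_n\to\eta$ uniformly by \cref{cor:charge-compactness}. For the plasma self-interaction we write
\[
 \frac12\iint\frac{\rho(x)\rho(y)}{|x-y|}\,\dd x\dd y=c\|E\|_2^2 .
\]
The strong convergence $E_n\to E$ in $L^2((0,T)\times\R^3)$ from \eqref{eq:field-convergence}, together with $\psi\in L^\infty$, then gives convergence of $\int\psi\|E_n\|_2^2\,\dd t$.

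\emph{The plasma--charge interaction.} Split the kernel $|x-\xi_n|^{-1}$ at radius $\varepsilon$. Away from the charge the truncated kernel is bounded and Lipschitz, uniformly in $n$. So \eqref{eq:density-L1-convergence} and the uniform convergence of $\xi_n$ give convergence of the truncated integrals, exactly as in \cref{lem:newton-limit}. Near the charge we use
\[
 |x-\xi_n|^{-1}\le\varepsilon|x-\xi_n|^{-2}\quad\text{on }\{|x-\xi_n|<\varepsilon\}.
\]
Combined with \eqref{eq:uniform-approximations} and \eqref{eq:limit-critical-package}, this bounds the inner part by $C(1+T)\varepsilon$, uniformly in $n$ and also for the limit. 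Let $n\to\infty$ first and then $\varepsilon\to0$.

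\emph{The plasma kinetic energy.} We expect this to be the main obstacle, since weak-$*$ convergence of $f_n$ alone allows a defect. We handle it as in \cref{rem:energy-Kgt2}. Let $\chi_R^v$ be a velocity cutoff and $\chi_R^x$ a spatial cutoff. The truncated quantity $\int\psi\iint\chi_R^x\chi_R^v|v|^2f_n$ converges by \eqref{eq:f-weak-star}, because the test function is compactly supported and bounded.

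The velocity tail is controlled by the uniform moment \eqref{eq:uniform-approximate-moment}. For $R>2\sup_n\|\eta_n\|_\infty$,
\[
 \iint_{|v|>R}|v|^2f_n\,\dd x\dd v\le C\iint_{|v|>R}|v-\eta_n|^2f_n\,\dd x\dd v\le CR^{2-K}H_{K,n}\le C_{T,K}R^{2-K}.
\]
Here we use $K>2$; this is the only place the hypothesis enters.

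For the spatial tail on $|v|\le 2R$, we bound $|v|^2f_n\le 4R^2 f_n$ and apply \eqref{eq:space-tightness}. This is uniform in $n$ once $R$ is fixed. Let $R_x\to\infty$ first, then $R\to\infty$.

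The limit $f$ obeys the same tail bounds: the velocity tail by \cref{lem:moment-limit}, and the spatial tail by integrability. Hence $\int\psi\iint|v|^2f_n\to\int\psi\iint|v|^2f$. Summing the four components completes the argument.
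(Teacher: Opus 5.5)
Your proposal is correct and follows the paper's proof essentially step by step. It uses the same four-term decomposition. The plasma kinetic energy is handled by weak-$*$ convergence, spatial tightness and the $R^{2-K}$ velocity tail; the self-interaction by $\mathcal U_n=\frac1{8\pi}\|E_n\|_2^2$ with strong $L^2$ convergence of $E_n$; and the plasma--charge term by a near/far split. The only deviation is in the tube contribution of that last term: you bound it by $\varepsilon$ times the global inverse-square bound from \eqref{eq:uniform-approximations} and \eqref{eq:limit-critical-package}, giving $C(1+T)\varepsilon$ (with $2\varepsilon$ once the cutoff is smoothed as in \cref{lem:newton-limit}). The paper instead uses the shrinking-tube estimate \eqref{eq:near-charge}. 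Your version is valid and slightly more economical, because for the potential the factor $\varepsilon$ from $|x-\xi_n|^{-1}\le\varepsilon|x-\xi_n|^{-2}$ already gives smallness.
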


\begin{proof}
Write
\[
 \mathcal E_n=
 \mathcal K_n+\mathcal K_n^{\rm c}+\mathcal U_n+\mathcal V_n,
\]
where the four terms are, respectively, the plasma kinetic energy, the charge
kinetic energy, the plasma self-interaction, and the plasma--charge
interaction.  We show that each component converges in distributions in time;
the last three in fact converge strongly in $L^1(0,T)$ (uniformly for the
charge kinetic term).

\emph{Plasma kinetic energy.}
This is the only component for which weak convergence alone is insufficient,
and it is the only point where $K>2$ is used in the energy argument.  The
energy-moment bound and the uniform bound for $\eta_n$ imply
\begin{equation}\label{eq:velocity-K-moment}
 \sup_n\sup_{0\leq t\leq T}
 \iint_{\R^6}|v|^Kf_n(t,x,v)\,\dd x\dd v\leq C_{T,K}.
\end{equation}
Indeed,
$|v|^K\leq C_K(|v-\eta_n(t)|^K+|\eta_n(t)|^K)$, and the two terms are
controlled by \eqref{eq:uniform-approximate-moment} and
\eqref{eq:uniform-energy-approximation}.  Hence
\begin{equation}\label{eq:kinetic-tail-K}
 \sup_n\sup_{0\leq t\leq T}
 \iint_{|v|>R}|v|^2f_n(t,x,v)\,\dd x\dd v
 \leq C_{T,K}R^{2-K}\longrightarrow0.
\end{equation}
By \cref{lem:moment-limit} and the boundedness of $\eta$ on $[0,T]$, the
limit $f$ obeys the same velocity $K$-moment bound and therefore the same
velocity-tail estimate.

Let $\vartheta\in C_c^\infty(0,T)$.  Choose cutoffs $\chi_L(x)$ and
$\zeta_R(v)$ which equal one on the balls of radii $L$ and $R$ and vanish
outside the balls of radii $2L$ and $2R$.  For fixed $L$ and $R$, weak-*
convergence of $f_n$ gives convergence of
\[
 \int_0^T\vartheta(t)\iint
 |v|^2\chi_L(x)\zeta_R(v)f_n(t,x,v)\,\dd x\dd v\dd t.
\]
For fixed $R$, the error from removing $\chi_L$ is at most
\[
 4R^2\|\vartheta\|_\infty
 \int_0^T\int_{|x|>L}\rho_n(t,x)\,\dd x\dd t,
\]
which tends to zero uniformly in $n$ by \eqref{eq:space-tightness}.  The corresponding spatial-tail error for the limit is bounded by
\[
 4R^2\|\vartheta\|_\infty
 \int_0^T\int_{|x|>L}\rho(t,x)\,dx\,dt,
\]
which also tends to zero as \(L\to\infty\), since
\(\rho\in L^1((0,T)\times\mathbb R^3)\). The
velocity cutoff is then removed, uniformly for $f_n$ and for $f$, by
\eqref{eq:kinetic-tail-K}.  Thus
\begin{equation}\label{eq:kinetic-energy-convergence}
 \int_0^T\vartheta(t)\mathcal K_n(t)\,\dd t
 \longrightarrow
 \int_0^T\vartheta(t)\mathcal K(t)\,\dd t.
\end{equation}

\emph{Charge kinetic energy and plasma self-interaction.}
The convergence $\eta_n\to\eta$ in $C([0,T])$ gives
$\mathcal K_n^{\rm c}\to\mathcal K^{\rm c}$ uniformly.  With the present
normalization,
\[
 \mathcal U_n(t)=\frac1{8\pi}\|E_n(t)\|_2^2.
\]
The strong convergence \eqref{eq:field-convergence} therefore yields
\begin{equation}\label{eq:self-energy-convergence}
 \|\mathcal U_n-\mathcal U\|_{L^1(0,T)}\longrightarrow0.
\end{equation}

\emph{Plasma--charge interaction.}
Fix $\varepsilon>0$ and split the interaction by a smooth cutoff supported in
$|x-\xi_n(t)|<2\varepsilon$.  Away from this tube, the kernel
$|x-\xi_n(t)|^{-1}$ is bounded and Lipschitz with respect to its center, so
\eqref{eq:density-L1-convergence} and \eqref{eq:charge-convergence} give
convergence in $L^1(0,T)$.  Inside the tube,
\[
 \frac1{|x-\xi_n(t)|}
 \leq\frac{2\varepsilon}{|x-\xi_n(t)|^2}.
\]
By \eqref{eq:near-charge}, the corresponding contribution is bounded by
$C_{T,K}\varepsilon^{3/2}$ uniformly in $n$.  The limiting contribution
obeys the same estimate by \eqref{eq:limit-near-charge}.  Sending
$\varepsilon\downarrow0$ gives
\begin{equation}\label{eq:charge-energy-convergence}
 \|\mathcal V_n-\mathcal V\|_{L^1(0,T)}\longrightarrow0.
\end{equation}

For every $n$,
$\mathcal E_n(t)=\mathcal E_n(0)$ for all $t$, and
$\mathcal E_n(0)\to\mathcal E(0)$ by
\eqref{eq:initial-energy-convergence}.  Passing to the limit against an
arbitrary $\vartheta\in C_c^\infty(0,T)$ by
\eqref{eq:kinetic-energy-convergence}--\eqref{eq:charge-energy-convergence}
yields
\[
 \int_0^T\vartheta(t)\mathcal E(t)\,\dd t
 =\mathcal E(0)\int_0^T\vartheta(t)\,\dd t.
\]
Hence the total energy equals its initial value for almost every time in
$(0,T)$.
\end{proof}

\begin{proof}[Completion of the proof of \cref{thm:main}]
Starting from the same approximation sequence, extract first a subsequence
converging on $[0,1]$, then from it a further subsequence converging on
$[0,2]$, and continue inductively.  The diagonal subsequence converges on
every $[0,N]$, and the resulting limits agree on overlapping intervals.
It therefore produces a global triple $(f,\xi,\eta)$.

The Vlasov equation follows from
\cref{prop:density-field-compactness,lem:singular-force-limit}.  Newton's
equation, the absolute convergence of the force on the charge, and
$\eta\in W^{1,1}_{\mathrm{loc}}$ follow from \cref{lem:newton-limit}.  The
near-charge condition in \cref{def:weak-solution} is contained in
\eqref{eq:limit-near-charge}.  The critical estimates asserted in
\cref{thm:main} are exactly \eqref{eq:limit-critical-package}; the energy
moment and energy identity are given by
\cref{lem:moment-limit,prop:energy-conservation}.

Finally, the uniform estimate \eqref{eq:charge-holder} passes to the uniform
limit $\eta_n\to\eta$, giving \eqref{eq:main-holder}; since
$\dot\xi=\eta$, this also yields
$\xi\in C^{1,\alpha_K}_{\mathrm{loc}}$.  Hence every assertion of
\cref{thm:main} holds on each finite time interval.
\end{proof}

\paragraph{Data/Code Availability.}
The manuscript has no associated data or code.

\paragraph{Conflict of interest.}
The author declares that he has no conflict of interest.

\end{document}